\newtheorem{theorem}{Theorem}[section]
\newtheorem{corollary}[theorem]{Corollary}
\newtheorem{proposition}[theorem]{Proposition}
\newtheorem{List}[theorem]{List}
\newtheorem{conjecture}[theorem]{Conjecture}
\theoremstyle{definition}
\newtheorem{definition}[theorem]{Definition}
\theoremstyle{plain}
\newtheorem{lemma}[theorem]{Lemma}
\theoremstyle{remark}
\newtheorem{remark}[theorem]{Remark}
\newtheorem{example}[theorem]{Example}
\newcommand{\m}{\mathcal}
\def\RR{\mathbb{R}}
\def\CC{\mathbb{C}}
\def\QQ{\mathbb{Q}}
\def\FF{\mathbb{F}}
\def\ZZ{\mathbb{Z}}
\def\NN{\mathbb{N}}
\def\Z2{\ZZ/2\ZZ}
\def\({\left(}
\def\){\right)}
\DeclareMathOperator\GL{GL}
\DeclareMathOperator\HH{H}
\DeclareMathOperator\Gal{Gal}
\DeclareMathOperator{\IH}{IH}
\DeclareMathOperator{\KH}{KH}
\DeclareMathOperator\Her{Her}
\DeclareMathOperator\Quad{Quad}
\DeclareMathOperator\Rep{Rep}
\DeclareMathOperator\Res{Res}
\DeclareMathOperator\Ind{Ind}
\DeclareMathOperator\Span{Span}
\DeclareMathOperator\rank{rank}
\DeclareMathOperator\Stb{Stb}
\newcommand{\frk}{\mathfrak}
\newcommand\Cref[1]{{Corollary~\ref{#1}}}
\newcommand{\low}{_}
\newcommand{\KerH}[3]{\KH^1({#1},{#2},{#3})}
\newcommand{\ImH}[3]{\IH^1({#1},{#2},{#3})}
\newcommand{\bd}{\textbf}
\newcommand\IIarr{\ar@<-.4ex>[r] \ar@<.4ex>[r]}
\newcommand\IIard{\ar@<-.4ex>[d] \ar@<.4ex>[d]}
\newcommand{\IIIarr}{\ar@<-.8ex>[r] \ar[r] \ar@<.8ex>[r]}
\newcommand{\IIIard}{\ar@<-.8ex>[d] \ar[d] \ar@<.8ex>[d]}
\newcommand{\IVarr}{\ar@<-1.2ex>[r] \ar@<-.4ex>[r] \ar@<.4ex>[r] \ar@<1.2ex>[r]}
\newcommand{\IVard}{\ar@<-1.2ex>[d] \ar@<-.4ex>[d] \ar@<.4ex>[d] \ar@<1.2ex>[d]}
\newcommand{\COH}[3]{\HH^{#1}(#2,#3)}
\newcommand {\SES}[3]{\begin {CD} 1 @>>> #1 @>>> #2 @>>> #3 @>>> 1 \\ \end{CD}}
\newcommand {\LES}[4]{\begin {CD} 1 @>>> #1^{#4} @>>> #2^{#4} @>>> #3^{#4} @>\delta>> \COH{1}{#4}{#1} @>>> \COH{1}{#4}{#2} @>>> \COH{1}{#4}{#3}  \\ \end{CD}}
\newcommand{\colim@}[2]{%
\vtop{\m@th\ialign{##\cr
	\hfil$#1\operator@font Hocolim$\hfil\cr
	\noalign{\nointerlineskip\kern1.5\ex@}#2\cr
	\noalign{\nointerlineskip\kern-\ex@}\cr}}%
}
\newcommand{\colim}{%
\mathop{\mathpalette\colim@{\rightarrowfill@\textstyle}}\nmlimits@
}
\begin{document}

%\makeatother\

\title{On the Stability and Gelfand Property of Symmetric Pairs}
\author{Shachar Carmeli}
\date{\today}

% \tracingall

\maketitle

\begin{abstract}
A symmetric pair of reductive groups $(G,H,\theta)$ is called stable, if every closed double coset of $H$ in $G$ is preserved by the anti-involution $g\mapsto \theta(g^{-1})$.

In this paper, we develop a method to verify the stability of symmetric pairs over
local fields of characteristic 0 (Archimedean and $p$-adic), using non-abelian group cohomology.
Combining our method with results of Aizenbud and Gourevitch, we classify the Gelfand pairs among the pairs
\begin{align*}
&(SL_n(F), (GL_k(F) \times GL_{n - k}(F)) \cap SL_n(F)),
(U(B_1 \oplus B_2),U(B_1) \times U(B_2)),\\
&(GL_n(F),O(B)), (GL_n(F),U(B)),
(GL_{2n}(F), GL_n(E)),(SL_{2n}(F), SL_n(E)),
\end{align*}
and the pair $(O(B_1 \oplus B_2),O(B_1) \times O(B_2))$ in the real case.

\end{abstract}
\tableofcontents
\section{Introduction}
\label{section: introduction}

A pair consisting of a reductive group $G$ and a subgroup $H$, both defined over a local field $F$ is called a Gelfand pair, if the space of $H$-invariant continuous functionals on every irreducible smooth admissible\footnote{If $F$ is Archimedean then admissible means smooth admissible Frechet representation of moderate growth. } representation of $G$ is at most one-dimensional. In this paper we consider the case when $H$ is a symmetric subgroup of $G$, i.e. the group of fixed points of an involution of $G$.

It is classically known that if $F$ is Archimedean, $G$ is a connected Lie group and $H$ is compact then $(G,H)$ is a Gelfand pair. If $G$ is not compact then this is usually not the case. However, many symmetric pairs of non-compact reductive groups were shown to be Gelfand pairs in
\cite{vD86,Fli91,BvD94,JR96,AGRS10,AG09,AG10,SZ12,AS12,Aiz13,Zha10}. These papers used the Gelfand-Kazhdan criterion and its various generalizations. The Gelfand-Khazdan criterion reduces the verification of the Gelfand property to a statement on equivariant distributions. In \cite{AG09,AG10} this distributional criterion is reduced, for many symmetric pairs, to a geometric statement that in \cite{AG09,AG10} is called "goodness", and we call \emph{stability}.
We say that a symmetric pair $(G,H)$ is \textbf{stable} if every closed double coset $HgH\subset G$ is stable under the anti-involution $g \mapsto \theta(g^{-1})$, where $\theta$ is the involution of $G$ such that $H=G^{\theta}$.

In this paper we settle the stability problem in a systematic way, producing a practical method to decide the
stability for pairs $(G,H)$.
Using this method and the results of \cite{HW93}, we show that the stability of a pair $(G,H)$ 
imply the uniqueness of open orbits of $H$ in every parabolic quotient of $G$. We call a pair with the latter property a \textbf{p-stable pair}. 
This result, combined with representation-theoretic
results due to Blanck-Delorme and van den Ban in \cite{BD08} and \cite{vdB88} respectively, shows that both the Gelfand property and the stability of a pair imply
its p-stability. We conjecture that in the real case or in the case where $G$ is quasi-split, stability and p-stability are equivalent. However, for \emph{p-adic} pairs p-stability is strictly
weaker than stability (See Example \ref{example: example 1}).

Our results for the representation theory of symmetric pairs are summarized in the following theorem
(See Theorem \ref{theorem: main theorem of paper}).

\begin{theorem}
\label{theorem: main theorem}
Let $F$ be a local field of characteristic $0$ different from $\CC$, and $E/F$ a quadratic extension. 
\begin{itemize}
\item The pair $(SL_{k+l}(F),S(GL_k(F)\times GL_{l}(F)))$ is a Gelfand pair if and only if $k
\ne l$. 
\item Let $F=\RR$. The pair $(O_{p_1+p_2,q_1+q_2}(\RR),O_{p_1,q_1}(\RR)\times O_{p_2,q_2}(\RR))$ is a Gelfand pair if and only if at least one of $p_1,p_2,q_1,q_2$ vanish. The same holds for 
$(U_{p_1+p_2,q_1+q_2}(\RR),U_{p_1,q_1}(\RR)\times U_{p_2,q_2}(\RR))$
\item Let $F$ be non-Archimedean and let $B=B_1 \oplus B_2$ be a non-degenerate Hermitian form over $E$. The pair $(U(B),U(B_1)\times U(B_2))$ is a Gelfand pair if and only if one of the forms $B_i$ has rank 1.
\item The pair $(SL_n(E),SL_n(F))$ is a Gelfand pair if and only if $n$ is odd. 
\item The pair $(SL_{2n}(F),SL_n(E))$ is a Gelfand pair if and only if either $n=1$ or $F=\RR$.
\end{itemize}
\end{theorem}
Note that symmetric pairs over over $\CC$ are automatically stable ,e.g. by \cite[Corollary 7.1.7]{AG09}, and therefore we do not consider them in this paper. 

\subsection{Outline of the Method}

We shall briefly describe our stability verification method. 
Given a group $G$ with an involution $\theta$, there is a naturally associated cohomology pointed set $\HH^1(\theta,G)$. This is simply the collection of $G$-orbits of all elements in $G$ such that $\theta(x)=x^{-1}$, by the action $\delta_g(x)=gx\theta(g)^{-1}$. 
The method for verifying the stability of a symmetric pair is based on a construction of obstruction classes leaving in such cohomology sets. A "candidate" for a counter example of the stability of a symmetric pair $(G,H,\theta)$ is a closed double coset $HgH$. We start by showing that the question whether $H\theta(g)^{-1}H=HgH$ depends only on the corresponding "coboundary" $g\theta(g)^{-1}$. It turns out that it is much easier to consider \emph{all} semi-simple elements $r\in G$ such that $\theta(r)=r^{-1}$, and then add the constraint that they represent the trivial cohomology class in $\HH^1(\theta,G)$. 

For every semi-simple element $r\in G$ such that $r=g\theta(g)^{-1}$ (i.e. $r$ represent the base-point of $\HH^1(\theta,G)$), we show that this element contradicts the stability of $(G,H,\theta)$ if and only if the corresponding cohomology class $[r]$ represented by $r$ is trivial already in $\HH^1(\theta,Z_G(r))$. In other words, $r=g\theta(g)^{-1}$ for $g$ which commutes with $r$. 
At this point, we take a direct approach. For every $r\in G$ we compute the map $\HH^1(\theta,Z_G(r))\to \HH^1(\theta,G)$ and check if the preimage of the base-point contains classes other than the basepoint. However, there is still something to say regarding the \emph{order} in which we do it. 

The easiest way to construct examples of semi-simple elements $r$ such that $\theta(r)=r^{-1}$, is to consider tori $A\subseteq G$ on which $\theta$ acts by inversion. For technical reasons we consider such tori which are split over $F$. If $r\in A$ for such a torus, then $Z_G(A)\subseteq Z_G(r)$, so if $[r]$ is trivial in $\HH^1(\theta,Z_G(A))$ it can not serve as a counter example for stability. Actually, we show that the opposite is also true: if $\alpha$ is in the kernel of the map $\HH^1(\theta,Z_G(A))\to \HH^1(\theta,G)$, then a "generic" representative of $\alpha$ in $\HH^1(\theta,Z_G(A))$ gives a counter example to the stability of $(G,H,\theta)$.

It turns out that there are only finitely many $H$-conjugacy classes of maximal such tori $A$, and hence that computing all the maps $\HH^1(\theta,Z_G(A))\to \HH^1(\theta,G)$ for various $A$-s is a relatively easy task. This allows to \emph{falsify} the stability of many pairs. For the remaining cases, our methods require to proceed directly and compute all the obstruction classes $[r]$ mentioned before. 

Even though we originally considered the criterion based on split tori mainly as a computational trick, it turns out that it has a geometric meaning of its own. We discovered that the cohomology classes coming from centralizers of maximal tori $A$ as above, obstruct the uniqueness of open orbits of $H$ in parabolic quotients of $G$. This way, using the close relation between the cohomological obstructions for stability and p-stability (i.e. the uniqueness of open $H$-orbits in parabolic quotients), we show that stability of $(G,H,\theta)$ is stronger than its p-stability.

\subsection{Structure of the Paper}

In section 2 we introduce notations, partially standard and partially specific for this article,
to be used in the theory of symmetric pairs.

In section 3 we recall the theory of non-abelian group cohomology,
which is the main tool we use for our treatment of symmetric pairs.

Recall that a symmetric pair $(G,H,\theta)$ is called stable
if every closed $H$ double-coset in $G$ is stabilized by the
anti-involution $\sigma: g \mapsto \theta(g)^{-1}$.

In section 4 we show how, using only calculation of cohomology groups, one can determine the stability of a symmetric pair.
We construct, for each semi-simple $r \in G$ of the form $g \theta(g)^{-1}$,
a cohomological obstruction $[r] \in \HH^1(\theta,G)$ for the stability of the 
of the double coset $HgH$ by the anti-involusion $\sigma$. We show that this obstruction vanishes 
if and only if $\sigma(HgH) = HgH$.  

In section 5 we consider other types of stability, in particular p-stability, and write cohomological obstructions for them.
We also state and prove several relations between these stability properties.
We show that stability implies p-stability, and give counter example for the converse.

In section 6 we use the obstructions introduced earlier in the paper to varify the stability, p-stability 
and other stability-related properties of various symmetric pairs over local fields. 
The complete list of cases we considered is summarized in table \ref{table:stable pairs} in this section.

In section 7 we prove that the Gelfand property of a symmetric pair implies its p-stability. 
Using this result, the stability results of section 6 and the methods developed in \cite{AG09} 
we prove Theorem \ref{theorem: main theorem}.

\subsection {Acknowledgments}

First, I want to deeply thank my advisor Dmitry Gourevitch for guiding me in this project.
I want to thank also some other people for their valuable comments and references.
Among them are Jeffrey Adams, Avraham Aizenbud, Joseph Bernstein, Michael Borovoi, Gal Dor,  Aloysius G. Helminck, Lev Radzivilovsky and many others.

The author is partially supported by the Adams Fellowship of the 
Israeli Academy of Science and Humanities, 
ISF grant 249/17 and ERC StG grant 637912.

\section{Definitions and Notation}
\label{section: definitions and notation}

We shall use the following standard notation in this paper:

\begin {itemize}
\item For a group $G$ and a subset $X \subseteq G$ we let $Z_{G}(X) = \{g \in G |: gx = xg \quad \forall x \in X\}$ be the centralizer of $X$.
\item If $G$ acts on $X$ and $x \in X$, we let $\Stb_{G}(x) = \{g \in G |: g(x) = x\}.$
\item $F$ will denote a local field of characteristic 0.
\item $\bar{F}$ is the algebraic closure of $F$.
\item $\textbf{H}, \textbf{G}, \text{e.t.c.}$ will denote reductive algebraic groups defined over $F$. The rational points of an algebraic group $\bd{G}$ will be denoted by the corresponding thin letter, namely $\bd{G}(F)=G$. 
\item We denote by $\Gal_F$ the absolute Galois group of $F$. For a field extension $E/F$ we let $\Gal_{E/G}$ be the relative Galois group.
\item For a torus $\textbf{A}$ we let $X^*(\textbf{A})$ (resp. $X_*(\textbf{A})$) denote the lattice of characters (resp. co-characters) of $\textbf{A}$. Namely, $X^*(\bd{A})=Hom(\bd{A},\mathbb{G}_m)$ and $X_*(\bd{A})=Hom(\mathbb{G}_m,\bd{A})$. 
\item For a character $\psi: \textbf{A} \to \textbf{G}_m$ and a one-parameter subgroup $\alpha : \textbf{G}_m \to \textbf{A}$, let $<\psi, \alpha>$ be the unique integer
such that $t^{<\psi, \alpha>} = \psi(\alpha(t))$. If $\textbf{A}$ is defined over $F$, this pairing turn $X^*(\textbf{A})$ and $X_*(\textbf{A})$ into dual $\Gal_F$-modules.
\item For a linear algebraic group $\textbf{P}$ and a torus $\textbf{A} \subseteq \textbf{P}$, let $\Phi(\textbf{A}, \textbf{P}) \subseteq X^*(\textbf{A})$ be the set of roots of
$\textbf{A}$ in $\textbf{P}$.
\item If $\Phi \subseteq V$ is a symmetric finite set of vectors in a real linear space then by a \textbf{collection of positive roots} in $\Phi$
we mean an intersection of $\Phi$ with an half-space containing no element of $\Phi$. This is compatible with the notion of positive roots
from the theory of root systems.
\end{itemize}

Let $G = \textbf{G}(F)$ be a reductive algebraic group over a local field $F$, and let $\theta$ be an involution of $G$.
We consider $G$ and all its subspaces and quotients with their usual topology arising from the topology of $F$,
sometimes called the $t$-topology.
We call the triple $(G, H, \theta)$ where $\theta: G\to G$ is an involution and $H=G^\theta$ a \textbf{symmetric pair} when
$\textbf{G}/ \textbf{H}$ is connected in the Zariski topology and $\textbf{G}$ is connected modulo its center.

The following notions will be frequently used:
\begin {itemize}
\item $\sigma(g) = \theta(g^{-1})$ is the anti-involution associated with $\theta$.
\item $H = G^\theta = \{g \in G |: \theta(g) = g\}$ will be referred to as the \textbf {orthogonal part} of G.
\item $G^\sigma=\{g\in G : \theta(g)=g^{-1}\}$ will be referred to as the \textbf{symmetric part} of G.
\item $s (g) = g \sigma (g)$ will be called the \textbf{symmetrization} map.
%\item $\bar{s} (g) =\sigma (g) g$ will be called \textbf{anti-symmetrization}.
\end{itemize}

Let $\frk{g}$ be the Lie algebra of $G$. The differential $\theta$ acts on $\frk{g}$ as an involution. For simplicity we denote the action on the Lie algebra by $\theta$ as well. the linear space $\frk{g}$ naturally decomposes into a direct sum $\frk{g} = \frk{h} \oplus  \frk{g}^{\sigma}$, where $\frk{h}=Lie(H)$. The spaces $\frk{h}$ and $\frk{g}^{\sigma}$ can be identified with the tangent spaces of $H$ and $G^{\sigma}$ respectively.

The main notion we consider in this paper is the following.
\begin {definition} 
A symmetric pair $(G,H,\theta)$ is called \textbf{stable} if every closed double coset of $H$ in $G$
is stable under $\sigma$. In other words, for each $g \in G$ such that $HgH$ is closed in $G$,
\[\sigma (HgH) = HgH.\]
\end {definition}

\begin {remark}
This notion is referred to as a \textbf{good} pair in \cite{AG09}. We chose the word stable to indicate that the notion is related
to the stability of the double cosets under the corresponding anti-involution.
\end {remark}
\begin{remark}
Here again we consider $G$ with the t-topology coming from the topology of $F$. 	
\end{remark}

\begin{definition}
	Let $(G,H,\theta)$ be a symmetric pair. An element $g\in G$ is called stable, if $\sigma(HgH)=HgH\subseteq G.$
\end{definition}
Hence, the pair $(G,H,\theta)$ is stable
if and only if every $g \in G$ with closed $H\times H$-orbit is stable.

We list now few examples of symmetric pairs.

\begin {example}
Let $G=\GL_n(\RR)$, $\theta\left(g\right) = (g^{-1})^t$.

In this case $\sigma\left(g\right) = g^t$,  $ H = O_n(\RR)$, $G^\sigma$ is the set of symmetric matrices, $\frk{h}= \frk{so}_n$,
and $\frk{g}^\sigma$ is the set of symmetric matrices in $\frk{gl}_n$. The symmetrization map is given by
$s(g) = gg^t$.
\end {example}

This example is the source for our terminology for symmetric pairs. 

\begin {example}
\label {example: quadratic pairs}
Let $\textbf{G}$ be a reductive group and let $E/F$ be a quadratic extension. Let $c$ denote the unique non-trivial element of $\Gal_{E/F}$. Then $(\textbf{G}(E),G,c)$,
when considered as a pair over $F$,  is a symmetric pair.
\end {example}

Recall that the adjoint group of $G$ is defined as the quotient $Ad_G := G/Z(G)$ of $G$ by its center $Z(G)$, and for each $x \in G$ the corresponding element $Ad_x \in Ad_G$ acts by conjugation on $G$, as well as on the Lie algebra $\frk{g}$.

\begin {example}
\label {example: inner pairs}
Let $G$ be a reductive group defined over $F$ and let $h \in G$ be an element such that $Ad_h$ has order $2$. Then $(G, Z_G(h), Ad_h)$
is a symmetric pair. The example works also in the case $Ad_h$ is of order $2$ in $Ad_G$.
\end {example}

\begin {example}
\label {example: diagonal pairs}
Let $G$ be a reductive group, and let $\Delta(G)= \{(g,g) |: g \in G\} \subseteq G$ be the
diagonal subgroup of $G \times G$. Then $(G \times G, \Delta(G), (x,y) \mapsto (y,x))$ is a symmetric pair.
\end {example}

\begin{example}
\label{example: riemanian pairs}
Let $G$ be a reductive Lie group and $\theta$ a Cartan involution of $G$. Then $H$ is a maximal compact subgroup, and $(G,H,\theta)$ is called a \textbf{Riemannian symmetric pair}.
\end{example}

\section{Preliminaries on Group Cohomology}
\label{section: preliminaries on group cohomology}

The language of non-abelian first group cohomology is extremely useful in order to verify whether a given pair is stable or not.
In this section we briefly remind some basics of the theory.
We skip the proofs and the details.
A complete treatment of the subject can be found in \cite[\S5]{Ser97}.

\subsection {Definition of Group Cohomology}

Let $L$ be a group. 
A group $G$ together with an $L$-action will be refered to as an $L$-group.
To every $L$-group $G$ we can consider $G^L$, the group of $L$-fixed points in $G$.
The first cohomology of $L$ with coefficients in $G$ is a measure for the lack of
right-exactness of the fixed-points functor. It is a pointed set denoted $\HH^1(L,G)$.

Recall that, for a group $L$ acting on 
$G$, one defines a 1-cocycle of $L$ with value in 
$G$ to be a function 
$L \to G$, $l \mapsto a_l$ such that 
\[a_{lt} = a_l l(a_t).\]  
The set of 1-cocycles of $L$ with coefficients in $G$
will be denoted $Z^1(L,G)$. 

The group $G$ acts from the right on $Z^1(L,G)$ by 
\[(\delta_g(a))_l := g^{-1} a_l l(g).\]
We call this action the \textbf{coboundary action}. 
The quotient $Z^1(L,G)/G$ of this action is, by definition, 
$\HH^1(L,G)$. It is a pointed set, with base-point 
the (orbit of) the coboundaries. 
Clearly, $\HH^1(L,G)$ is covariant functor of $G$ and a contravariant 
functor of $L$. 
If $f: G \to G'$ we denote by $f_*$ the induced map on cohomology. 
If $K \subseteq G$ is an inclusion of $L$-groups, we shall denote by $i_K^G$ that inclusion. 

\begin{definition}
Let $K \subseteq G$ be an $L$-subgroup of an $L$ group $G$.
We will denote
\begin{eqnarray*}
&\KerH{L}{K}{G} := Ker((i_K^G)_*), \\
&\ImH{L}{K}{G} := Im((i_K^G)_*).
\end{eqnarray*}
\end{definition}
Recall that the kernel of a map $f: (X,x_0) \to (Y,y_0)$ of pointed sets is by definition
$f^{-1}(y_0)$.

\subsection {Some Properties of non-abelian Group Cohomology}

The first and most foundamental property of 
non-abelian group cohomology is the existence of long 
exact sequence of cohomologies associated with a 
short exact sequence of $L$-groups.

Let
\[\SES{H}{G}{K}\]
be an exact sequence of $L$-groups. There is an induced 
sequence
\[\LES{H}{G}{K}{L}\]
which is an exact sequence of pointed sets.

Exactness for sequences of pointed sets is weaker than the corresponding property for sequences of groups.
The existence of long exact sequence can be strengthened in two ways.
Firstly, one can replace the group $K$ in the sequence with a pointed $L$-set on which $G$ acts transitively (not preserving the base-point), and we still get an exact sequence of the form
\[\HH^0(L,H) \to \HH^0(L,G) \to \HH^0(L,K) \to \HH^1(L,H) \to \HH^1(L,G)\] 
where $H$ is the stabilizer of the base-point of $X$.

Secondly, for an injection $f: H \to G$ of $L$-groups one can describe all the fibers of the map $f_*: \HH^1(L,K) \to \HH^1(L,G)$,
and not only the pre-image of the trivial cocycle. The description of the fibers is given by the twisting operation,
to be described in the next subsection.

The following classical consequence of the exact sequence will be 
extensively used.
\begin{proposition}[{\cite[Corollary 4.6]{Ber10}}]
\label {proposition: descent lemma}
Let $G$ be an $L$-group acting transitively on an $L$-set $X$ possessing an $L$-fixed point $x_0$. Let $K = \Stb_G(x_0)$. Then $\delta$ defines
a bijection \[X^L/G^L \cong \KerH{L}{K}{G}.\]
\end {proposition}
 .

\subsection {Twisting}
An important feature of non-abelian group cohomology is the twisting operation. Let $G$ be an $L$-group acting on an $L$-set $X$.
Let $a$ be a 1-cocycle of $L$ with coefficients in $G$. Then, using $a$, it is possible to twist the action of $L$ on $X$, as well as on
$G$, to obtain a new pair $(G,X)$ of an $L$-group acting on an $L$-set. It is done in the following way. Define a new
action of $L$ on $X$, denoted $x \mapsto l*x$, by \[l*(x) := a\low{l}(l(x)).\]
Similarly, define a new action of $L$ on $G$ by \[l*(g) := a\low{l}\cdot l(g) \cdot a\low{l}^{-1}.\]
The pair $(G, X)$ with the new actions is the twisting of $X$ and $G$ by $a$. In order to distinguish it from the original pair,
we denote it by $(\tau_a(G), \tau_a(X))$.
A routine calculation shows that this operation really gives an action of 
$L$ on $G$.
One of the most important properties of the twisting operation, is that it
induces an equivalence on first cohomology sets. 
More precisely, given a cocycle $a_l$ of $L$ with coefficients in $G$, 
the map 
$b_l \mapsto b_l a_l$ gives a bijection 
$\tau_{a} : \HH^1(L,\tau_a(G)) \to \HH^1(L,G)$, mapping the 
trivial cocycle to the class of $a_l$. 
This twisting operation is, moreover, compatible with long exact sequences in the obvious sense. 

The twisting operation is important for two reasons.
Firstly, it allows one to compute cohomologies of a twisted pair by mean of the cohomology of the original pair.
Secondly, using twisting we can describe completely the fibers of 
$(i_K^G)_* : \HH^1(L,K) \to \HH^1(L,G)$ for $K \subseteq G$. 
Indeed, if $a_l$ is a cocycle of $L$ with coefficients in 
$K$, then the fiber $(i_K^G)_*^{-1}((i_K^G)_* a_l)$ correspond via twisting 
by $a_l$ to the kernel of the map $\HH^1(L,\tau_a (K)) \to \HH^1(L,\tau_a(G))$. 

\subsection {Galois Cohomology}
Suppose that $L$ is the Galois group of a separable extension $E/F$, and that $\textbf{G}$ is an algebraic group
defined over $F$. Then $\COH{1}{L}{G(E)}$ is called the \textbf{Galois cohomology} of the extension $E/F$ with coefficients in $G$,
and denoted $\COH{1}{E/F}{G}$.
If $E = F^{sep}$ is the separable closure of $F$, we even write it as $\COH{1}{F}{G}$.
The following computation is basic for many other computations of cohomologies.
\begin{remark}
Note that in general Galois cohomology is defined as the continuous cohomology group of the profinite Galois group, but we consider only finite extensions here anyway, so this issue is irrelevant for us.
\end{remark}

\begin{proposition} [{Hilbert's 90 Theorem}]
\label {proposition: hilbert 90}
Let $E/K$ be a Galois extension. Then \[\COH{1}{E/F}{GL_n} = 1.\]
\end{proposition}

The following consequence of Hilbert's 90 Theorem is useful for us as well.
\begin {proposition}
\label {proposition: vanishing theorem for centralizers}
Let $E/F$ be a separable extension, and let $A \in M_{n \times n}(F)$.
Then \[\HH^1(E/F, Z_{\GL_n}(A)) = 1.\]
\end {proposition}

\begin {proof}
Let $\textbf{X}$ denote the conjugacy class of $A$ in $M_{n \times n}(E)$. Then Theorem \ref{proposition: descent lemma} shows that
\[\KerH{E/F}{Z_{GL_n}(A)}{GL_n} \cong \textbf{X}(F) / GL_n(F)\]
where $GL_n(F)$ acts on $\textbf{X}(F)$ by conjugation.

By Hilbert's 90 Theorem,
\[\COH{1}{E/F}{GL_n} = 1\]
and therefore
\[\HH^1(E/F, Z_{GL_n}(A)) \cong \textbf{X}(F) / GL_n(F).\]
But it is a classical theorem in linear algebra that
if two $F$-matrices are conjugate over $E$, they are also conjugate over $F$. We deduce that $GL_n(F)$ acts transitively on
$\textbf{X}(F)$.
\end {proof}

\section{Cohomology and Stability}
\label{section: cohomology and stability}

In this section we shll introduce the method for varifying stability of symmetric pairs using non-abelian cohomology. 
 
\begin {definition}

Let $G$ be a topological group acting continuously on a topological space $X$. An element $x \in X$ is called $G$-semi-simple
if the orbit $Gx \subset X$ is closed.
\end {definition}
When a reductive group $G$ over an algebraically closed field acts on itself by conjugation, this definition is consistent with the usual definition
of semi-simplicity.

The stability of a symmetric pair $(G,H,\theta)$ can now be restated as follows:
The pair $(G,H,\theta)$ is stable if and only if every $H\times H$-semi-simple element of $G$ is stable.

\subsection {Interpretation of the Cohomology of a Symmetric Pair}
\label{subsection: Interpretation of cohomology}

In this section we recall an explicit description of the cohomology set $\HH^1(\ZZ/2\ZZ, G)$ with $\ZZ/2\ZZ$ acting by the identification $\ZZ/2\ZZ = \{Id_G,\theta\}$.
This treatment of involutions using cohomology can be found in \cite[\S II.5]{BJ05}.
To emphesize the dependence on the involution $\theta$, we denote the cohomology of this $\ZZ/2\ZZ$ action by $\HH^1(\theta,G)$.

\begin {lemma}
\label {lemma: interpretation of cohomology}
Let $(G,H,\theta)$ be a symmetric pair. Let $G$ acts on $G^\sigma$ by \[\delta_g(r) = g^{-1} \cdot r \cdot \theta(g).\]
Then there is a natural bijection $\HH^1(\theta,G) \equiv S/\delta_{G}$, given by $a \mapsto a_{\theta}$.
\end{lemma}

\begin{proof}
Let $a_{\theta}$ be a cocycle. The cocycle condition gives 4 equations satisfied by $a$:
\begin{align}
&a_{Id} = a_{Id^2} = a_{Id} \cdot Id(a_{Id}) = a_{Id} \cdot a_{Id} \\
&a_{_\theta} = a_{Id\cdot \theta} = a_{{Id}} \cdot Id(a_{\theta}) = a_{{Id}} \cdot a_{{\theta}} \\
&a_{\theta} = a_{{ \theta \cdot Id}} = a_{{\theta}} \cdot \theta(a_{{Id}}) \\
&a_{{Id}} = a_{{ \theta \cdot \theta}} = a_{{\theta}} \cdot \theta(a_{{\theta}})
\end{align}

Condition (1) is equivalent to $a_{Id} = e_{G}$. Substituting this into (2) and (3) we see that they are consequences of (1).
Finally, condition (4) is equivalent to $a_{\theta} \in G^\sigma$.
Next, we describe the coboundary action on these cocycles. This is given by
\[\delta_g(a)_{Id} =  g^{-1} \cdot a_{Id} \cdot g = g^{-1} \cdot g = e_{G}\]
\[\delta_g(a)_{\theta} =  g^{-1} \cdot a_{\theta} \cdot \theta(g)\]
\end{proof}

\begin {corollary}
Let $r \in G^\sigma$. Then $r$ is a symmetrization of an element in $G$ if and only if the corresponding cohomology class $[r] \in \HH^1(\theta,G)$
is trivial.
\end {corollary}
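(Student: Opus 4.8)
The statement to prove is: for $r \in S$, the element $r$ is a symmetrization $s(g) = g\sigma(g) = g\theta(g^{-1})$ of some $g \in G$ if and only if the class $[r] \in H^1(\theta,G)$ is trivial. The plan is to simply unwind the bijection established in Lemma \ref{interpretation of cohomology}, which identifies $H^1(\theta,G)$ with $S/\delta_G$ where $\delta_g(r) = g^{-1} r\,\theta(g)$, with the class of $r$ corresponding to $r$ itself under $a \mapsto a_\theta$.

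First I would recall that, under this identification, the distinguished base-point of $H^1(\theta,G)$ — the class of the trivial cocycle $a_\theta = e_G$ — corresponds to the $\delta_G$-orbit of $e_G \in S$. Therefore $[r]$ is trivial precisely when $r$ lies in the $\delta_G$-orbit of $e_G$, i.e. when there exists $g \in G$ with $\delta_g(e_G)_\theta = g^{-1} \cdot e_G \cdot \theta(g) = g^{-1}\theta(g) = r$. So triviality of $[r]$ is equivalent to $r = g^{-1}\theta(g)$ for some $g \in G$.

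The only remaining point is to match $r = g^{-1}\theta(g)$ with the notion of symmetrization $s(h) = h\sigma(h) = h\theta(h^{-1}) = h\theta(h)^{-1}$. Writing $h = g^{-1}$ gives $s(g^{-1}) = g^{-1}\theta(g^{-1})^{-1} = g^{-1}\theta(g)$, so $r = g^{-1}\theta(g)$ says exactly that $r = s(g^{-1})$, i.e. $r$ is a symmetrization. (Equivalently, one checks directly that $g^{-1}\theta(g) = g^{-1}\sigma(g^{-1})^{-1}\cdots$ — but the cleanest route is the substitution $h = g^{-1}$, using $\sigma(h) = \theta(h^{-1})$ and that $g \mapsto g^{-1}$ is a bijection of $G$.) Conversely, if $r = s(h) = h\theta(h^{-1})$ for some $h$, then setting $g = h^{-1}$ we get $g^{-1}\theta(g) = h\theta(h^{-1}) = r$, so $r$ is in the $\delta_G$-orbit of $e_G$ and $[r]$ is trivial.

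There is essentially no obstacle here; the content is entirely bookkeeping with the definitions, and the one mild subtlety is the bijection $g \leftrightarrow g^{-1}$ needed to reconcile the asymmetry between the coboundary formula $g^{-1}\theta(g)$ and the symmetrization convention $s(g) = g\sigma(g)$. I would state the short chain of equalities and conclude.
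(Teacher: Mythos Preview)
Your proposal is correct and is exactly the unwinding the paper intends: the corollary is stated without proof immediately after Lemma~\ref{interpretation of cohomology}, as a direct consequence of the identification $H^1(\theta,G)\cong S/\delta_G$ together with the observation $\delta_{g}(e_G)=g^{-1}\theta(g)=s(g^{-1})$. There is nothing to add.
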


According to Lemma \ref{lemma: interpretation of cohomology}, 
$\HH^1(\theta, G)$ has a natural topology, the quotient topology 
from the projection $G^\sigma \to \HH^1(\theta, G)$. We shall prove that the induced topology on $\HH^1(\theta, G)$ is discrete. Equivalently,
the $G$-orbits in $G^\sigma$ are all open. This seem to be a well known result, and will play a central role in our approach to stability.

In order to prove this, we use the open mapping theorem for the action.

\begin {definition}
\label{definition: submersive action}
Let $G$ be an analytic F-group acting on an analytic manifold $X$. Then $G$ acts \textbf{submersively} if for every $x \in X$,
the action map \[\phi_x : G \rightarrow X\] given by $\phi_x(g) :=  g(x)$ is submersive.
\end {definition}

Equivalently, the action is submersive if the
global vector fields on $X$ induced by $\frk{g}$ span the tangent space of $X$ at every point.

By the open mapping theorem for analytic manifolds over $F$, if $G$ acts submersively on $X$ then every orbit of $G$ in $X$ is open.
Therefore, in order to prove that $\HH^1(\theta, G)$ is discrete, it will be sufficient to prove that $G$ act submersively on $Z^1(\theta, G) \cong G^\sigma$, where the action on the right object is induced from the $\delta$-action of $G$ on cocycles.

\begin{definition}
	Let $(G,H,\theta)$ be a symmetric pair. Denote by $G^\sigma_0$ the image of $s:G\to G^\sigma$. 
\end{definition}
In other words, $G^\sigma_0$ is the orbit of the identity element by the $\delta$-action.

\begin {theorem}
\label {theorem: submersive action}
Let $(G,H,\theta)$ be a symmetric pair. Then the action of $G$ on $G^\sigma$ given by $\delta$ is submersive.
\end {theorem}

\begin{proof}
We have to prove that, at any point $r \in G^\delta$, the action map $\phi_r(g) = g^{-1} r \theta(g)$ is submersive.
Consider first the special case $r = e_{G}$. At this point, we get \[\phi_{e_{G}}(g) = s(g^{-1}).\] We have a direct decomposition $\frk{g} = \frk{h} \oplus \frk{g}^\sigma$.
The map $ds$ is given by \[ds(X + Y) = 2Y,\] for $X \in \frk{h}, Y \in \frk{g}^\sigma$. This map is clearly onto $T_{e_{G}}G^\sigma \cong \frk{g}^\sigma$.
Since the action of $G$ on $G^\delta_0$ is transitive, this shows that the boundary action of $G$ is submersive at every point of $G^\sigma_0$.
For a point $r \in G^\sigma - G^\sigma_0$, we use twisting. Let $r \in G^\sigma$, and let $G' = \tau_{r}(G)$. We add $'$ to things related to $G'$, so that $(G^\sigma_0)'$ will be cocycles of
the twisted action and $\phi '$ the action map for the twisted action. Let $Id: G' \to G$ be the identity map. Consider the commutative square

\[
\begin {CD}
(G^\sigma)' @>\cdot r >>                         G^\sigma \\
@ A\phi'_eAA                                                                    @A\phi_rAA \\
                G'                      @>Id>>                                                  G \\
\end{CD}
\]

As the two horizontal maps are isomorphisms, the submersivity of $\phi_r$ follows from that of $\phi'_e$ which is the special case $r = e$ for the $\mathbb{Z}/2\mathbb{Z}$ group $G'$.
%\qed
\end{proof}

\begin{corollary}
The orbits of $G$ in $G^\sigma$ are open in the t-topology.
\end{corollary}

\begin {corollary}
\label{corollary: cohomology is discrete}
$\COH{1}{\theta}{G}$ is discrete for every symmetric pair $(G,H,\theta)$.
\end{corollary}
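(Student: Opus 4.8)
The plan is to deduce this immediately from the two results that precede it, so that the corollary becomes a purely formal consequence. By Lemma \ref{interpretation of cohomology} we identify $\COH{1}{\theta}{G}$ with the orbit space $S/\delta_G$, and, as noted right after that lemma, this orbit space is given the quotient topology inherited from the topology of $S$ viewed as a closed subvariety of $G$. Thus it suffices to prove the abstract statement: a quotient of a topological space by an equivalence relation all of whose classes are open is discrete.

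First I would record the general topological fact. Let $\pi : X \to X/\!\!\sim$ be a quotient map and suppose every equivalence class $\pi^{-1}(\bar x)$ is open in $X$. By the definition of the quotient topology, a subset $U \subseteq X/\!\!\sim$ is open precisely when $\pi^{-1}(U)$ is open in $X$; applying this to the singleton $U = \{\bar x\}$, whose preimage is the open set $\pi^{-1}(\bar x)$, shows that $\{\bar x\}$ is open. Hence every point of $X/\!\!\sim$ is open, i.e. $X/\!\!\sim$ is discrete.

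Next I would apply this with $X = S = Z^1(\theta, G)$ and the equivalence relation induced by the $\delta_G$-action, invoking the corollary just proved — that the orbits of $G$ in $S(G) = Z^1(\theta, G)$ are open — which itself rests on Theorem \ref{submersive action} and the fact that submersions of analytic manifolds are open maps. Since every $\delta_G$-orbit is open, every point of $\COH{1}{\theta}{G} = S/\delta_G$ is open, and therefore $\COH{1}{\theta}{G}$ is discrete.

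From the vantage point of this corollary there is no real obstacle: all of the genuine content — the submersivity of the $\delta$-action, proven by reduction to the base point via twisting — has already been dispatched in Theorem \ref{submersive action}, and the step from ``all orbits open'' to ``quotient discrete'' is entirely formal. The only thing to be mildly careful about is making explicit that the topology on $H^1(\theta,G)$ from Lemma \ref{interpretation of cohomology} is exactly the quotient topology, so that the open-preimage criterion applies verbatim.
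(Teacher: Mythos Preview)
Your argument is correct and is exactly the reasoning the paper intends: the paper states no separate proof for this corollary because, as noted in the paragraph preceding Theorem \ref{submersive action}, discreteness of $H^1(\theta,G)$ is equivalent to openness of the $G$-orbits in $S$, which is the immediately preceding corollary. You have simply made the formal ``open orbits $\Rightarrow$ discrete quotient'' step explicit.
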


\subsection {Equivalent Condition to Stability}

Let $(G, H, \theta)$ be a symmetric pair. We would like to describe stable elements of $G$ in terms of the conjugacy classes of their symmetrizations. 
The map $s$ is submersive everywhere, hence open. We can view it
as the action map of the transitive left action of $G$ on $G^\sigma_0$ given by $g \mapsto \delta_{g^{-1}}$. The map $s$ is invariant
under multiplication from the right by $H$, as if $\theta(h) = h$, then $s(gh) = ghh^{-1}\sigma(g) = s(g)$. Therefore,
$s$ induces a submersive $G$-equivariant map $s:G/H \rightarrow G^\sigma_0$.

\begin {lemma}
\label {lemma: injectivity of symmetrization on orbits}
The symmetrization map defines an isomorphism of analytic $F$-manifolds
\[s:G/H \stackrel{~}{\longrightarrow}G^\sigma_0.\]
\end{lemma}

\begin {proof}
As $G^\delta_0$ is a homogenous space for $G$, it is sufficient to prove that $H$ is the stabilizer of $e$ in $G^\sigma$. But
\[s(g) = e \Leftrightarrow g\sigma(g) = e \Leftrightarrow g = \theta(g) \Leftrightarrow g \in H\]
\end{proof}

\begin{remark}
	Note that in general both sides are not the $F$-points of an algebraic variety, e.g. in the case $G=GL_n$ and $\theta(x)=(x^{t})^{-1}$. However, they form an open subset of the $F$-points of an algebraic variety, namely of $(\bd{G}/\bd{H})(F)$.  
\end{remark}

\begin{proposition}
	\label{proposition: reducing stability to conjugacy problem}
	An element $g \in G$ is stable, if and only if $s(g)$ and $s(\sigma(g))$ are conjugate by an element of $H$.
\end{proposition}

\begin{proof}
The action of $H$ on $G/H$ by multiplication from the left is transformed by $s$ to conjugation. It follows from
Lemma \ref{lemma: injectivity of symmetrization on orbits} that $H\backslash G/H \cong G^\sigma_0/Ad_H$. The action of $\sigma$ on
the double-cosets space is transformed by $s$ to the action $Ad_H(s(g)) \mapsto Ad_H(s(\sigma(g)))$.
The result follows easily from these observations. 
\end{proof}

This means that we can check the stability of an element by looking at the $H$-conjugacy class of
its symmetrization. Note that in particular, the stability of $g$ depends only on the $H$-conjugacy class of $s(g)$.

In view of this result, and using slightly ambiguous, yet convenient language, we call a semi-simple element $r \in G^\sigma_0$ stable, 
if $r = s(g)$ for a stable element $g\in G$. In other words, if $r=s(g)$ is conjugate to $s(\sigma(g))$. 
Using this terminology, 
$g$ is stable if and only if $s(g)$ is. From now on we shall focus on the stability of elements of $G^\sigma$, not to be confused with their stability as elements of $G$.   

As the map $s$ is a homeomorphism from $G/H$ to $G^\sigma_0$, an element $g\in G$ is $H\times H$ semi-simple
if and only if $s(g)\in G^\sigma_0$ is $Ad_H$ semi-simple.
In  \cite[Theorem 7.2.1]{AG09},
it is proved that the $Ad_{H}$ semi-simplicity of $r \in G^\sigma$ implies its $Ad_{G}$ semi-simplicity. The following is a slight improvement of this result. 

\begin{lemma}
\label{lemma: orbits of Ad_H}
Let $(G,H,\theta)$ be a symmetric pair. The orbits of $Ad_H$ in $Ad_{G}(r) \cap G^\sigma$ are open and closed. 
\end{lemma}
In other words, the quotient $(Ad_{G}(r) \cap G^\sigma)/Ad_{H}$
is discrete.
\begin{proof}
We shall define a continuous injection
\[\mu: (Ad_{G}(r) \cap G^\sigma)/Ad_{H} \hookrightarrow \HH^1(\theta, Z_{G}(r)).\] Given this, the result follows from the discreteness of $\HH^1(\theta, Z_{G}(r))$, which in turn follows from Corollary \ref{corollary: cohomology is discrete}. 
Let $g \in G$, and assume that $grg^{-1} \in G^\sigma$. Then $\theta(g)\theta(r) \theta(g)^{-1} = g r^{-1} g^{-1}$.
On the other hand,
\[\theta(g)\theta(r) \theta(g)^{-1} = \theta(g) r^{-1} \theta(g)^{-1}.\]
Taking inverses,
we get that
\[grg^{-1} = \theta(g) r \theta(g^{-1}).\]
Therefore, the coboundary $s(g^{-1}) = g^{-1}\theta(g)$ belongs to $Z_G(r)$. Set \[\mu(grg^{-1})=[s(g^{-1})]\in \HH^1(\theta,Z_G(r)).\]
The map $\mu$ does not depend on the choice of $g$. Indeed,
if $grg^{-1} = g'rg'^{-1}$, then $g'^{-1}g \in Z_{G}(r)$. So
\[s(g^{-1}) = s((gg'^{-1}g')^{-1}) = \delta(g'^{-1}g)(s(g^{-1}))\]
which is equivalent to $s(g^{-1})$ as a cocycle of $Z_{G}(r)$.
It remains to verify the injectivity of $\mu$, the continuity of it being clear. If $s(g^{-1}) = s(g'^{-1})$, the elements $g$ and $g'$ differ
by a multiplication from the left by an element of $H$, by Lemma \ref{lemma: injectivity of symmetrization on orbits}. Writing
$g = hg'$ for $h \in H$, we get that $grg^{-1} =h(g'rg'^{-1})h^{-1}$. This does not change the $Ad_H$-orbit, so the map is injective.
%\qed
\end{proof}

%\begin{remark}
%Lemma \ref{lemma: orbits of Ad_H} do not follows directly from Theorem \ref{proposition: descent lemma}, as we take "anti-fixed points" instead of
%fixed points. But the proofs of the two are identical. In particular, the morphism we constructed in the proof is actually an isomorphism
%of topological spaces
%\[(Ad_{G}(r) \cap S)/Ad_{H} \cong \KerH{\theta}{Z_{G}(r)}{G}\]
%\end{remark}

As a result, we get 
\begin{theorem}
\label{the three are equivalent}
Let $(G,H,\theta)$ be a symmetric pair, and let $g\in G$. The following are equivalent:
\begin {enumerate}
\item $g$ is $H\times H$ semi-simple.
\item $s(g)$ is $Ad_{H}$ semi-simple.
\item $s(g)$ is $Ad_{G}$ semi-simple.
\end{enumerate}

\end{theorem}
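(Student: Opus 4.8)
The plan is to assemble Theorem \ref{the three are equivalent} purely from the pieces already established, so the proof is essentially a bookkeeping argument tying together three earlier facts. First I would recall the dictionary set up in Lemma \ref{injectivity of symmetrization on orbits}: the symmetrization map $s$ is an isomorphism of analytic manifolds $G/H \xrightarrow{\ \sim\ } S_0$, intertwining the left $H$-action on $G/H$ with the $Ad_H$-action on $S_0$, and hence inducing a homeomorphism $H\backslash G/H \cong S_0/Ad_H$. Since a homeomorphism of topological spaces carries closed orbits to closed orbits, this immediately gives the equivalence of (1) and (2): $g$ has closed $H\times H$-orbit in $G$ if and only if $s(g)$ has closed $Ad_H$-orbit in $S_0$, which (because $S_0$ is itself locally closed, being an open $G$-orbit in $S$ by Theorem \ref{submersive action}) is the same as $s(g)$ being $Ad_H$-semisimple in $G$.

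Next I would address (2) $\Leftrightarrow$ (3). The implication (3) $\Rightarrow$ (2) is the content of \cite[Theorem 7.2.1]{dima}, which I may cite directly. For the converse (2) $\Rightarrow$ (3) I would reproduce the argument sketched in the text just before Lemma \ref{orbits of Ad_H}: assuming $r = s(g)$ has closed $Ad_G$-orbit, one notes $Ad_H(r) \subseteq Ad_G(r) \cap S$, that $Ad_G(r)\cap S$ is closed in $G$ (intersection of a closed orbit with the closed subvariety $S$), so it suffices to show $Ad_H(r)$ is closed inside $Ad_G(r)\cap S$. This is exactly Lemma \ref{orbits of Ad_H}, which produces a continuous injection $(Ad_G(r)\cap S)/Ad_H \hookrightarrow H^1(\theta, Z_G(r))$ — in fact a homeomorphism onto $\KerH{\theta}{Z_G(r)}{G}$ — and since $H^1(\theta, Z_G(r))$ is discrete by the corollary to Theorem \ref{submersive action} applied to the symmetric pair structure on $Z_G(r)$, the orbit space is discrete, so every $Ad_H$-orbit in $Ad_G(r)\cap S$, in particular $Ad_H(r)$ itself, is open and closed there. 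Hence $Ad_H(r)$ is closed in $G$, giving (3) $\Rightarrow$ (2)'s converse, i.e. $r$ is $Ad_H$-semisimple.

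Finally I would note the one subtlety that needs care: in step (1) $\Leftrightarrow$ (2) the space $S_0$ is an \emph{open} subset of $S$, not all of $S$, so "closed orbit in $S_0$" versus "closed orbit in $G$" must be reconciled. This is handled by observing that $S_0$ is open in $S$, $S$ is closed in $G$, and the $Ad_G$-orbit of any point of $S_0$ meets $S_0$ in exactly one $Ad_H$-orbit worth of behaviour — more simply, once we know (via the chain (2) $\Leftrightarrow$ (3)) that $Ad_H$-semisimplicity in $S_0$ coincides with $Ad_G$-semisimplicity, and $Ad_G$-orbits are genuinely closed in $G$, there is no ambiguity. I would therefore sequence the writeup as: (i) invoke Lemma \ref{injectivity of symmetrization on orbits} to get (1) $\Leftrightarrow$ "$s(g)$ has closed $Ad_H$-orbit in $S_0$"; (ii) invoke Lemma \ref{orbits of Ad_H} plus discreteness of $H^1(\theta, Z_G(r))$ for (2) $\Rightarrow$ (3); (iii) cite \cite[Theorem 7.2.1]{dima} for (3) $\Rightarrow$ (2); (iv) conclude. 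The main obstacle, such as it is, is purely expository: making sure the "closed in $S_0$" / "closed in $G$" distinction is discharged cleanly rather than glossed over, since every genuine mathematical input has already been proven above.
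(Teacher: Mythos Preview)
Your approach is essentially identical to the paper's, and the mathematical content is correct, but you have the two directions of $(2)\Leftrightarrow(3)$ labelled backwards throughout. The result in \cite[Proposition 7.2.1]{dima} states that $Ad_H$-semisimplicity implies $Ad_G$-semisimplicity, i.e.\ it gives $(2)\Rightarrow(3)$, not $(3)\Rightarrow(2)$ as you write. Conversely, the argument via Lemma~\ref{orbits of Ad_H} that you spell out --- assume $Ad_G(r)$ is closed, intersect with $S$, use discreteness of $H^1(\theta,Z_G(r))$ to conclude $Ad_H(r)$ is closed --- is precisely the proof of $(3)\Rightarrow(2)$, not $(2)\Rightarrow(3)$. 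Your detailed exposition makes this clear (you assume ``$r=s(g)$ has closed $Ad_G$-orbit'' and conclude ``$Ad_H(r)$ is closed in $G$''), so the error is purely in the labels you attach, including in your final summary (ii)--(iii). Swap them and the proof is fine.

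One minor point in your favour: for $(1)\Leftrightarrow(2)$ you cite Lemma~\ref{injectivity of symmetrization on orbits}, which is indeed the relevant input (the homeomorphism $G/H\cong S_0$); the paper's own proof cites Lemma~\ref{interpretation of cohomology} here, which appears to be a misprint.
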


\begin {proof}
$(1)\Leftrightarrow(2)$ is a consequence of Lemma \ref{lemma: interpretation of cohomology} The implication
$(2)\Rightarrow(3)$ is already proved in \cite[Proposition 7.2.1]{AG09},
 and $(3)\Rightarrow(2)$ is
a consequence of Lemma \ref{lemma: orbits of Ad_H}.
%\qed
\end {proof}

We conclude with an application to the stability of Riemannian pairs, a fact which both simple and well known, but still serve as a good example of the theory developed so far. 

\begin{theorem}
\label{theorem: riemannian pairs}
Let $(G,H,\theta)$ be a Riemannian symmetric pair. Then $(G,H,\theta)$ is stable.
\end{theorem}

\begin{proof}
It will suffice to show that $s(g)$ and $s(\sigma(g))$ are conjugate for every $g$ for which $s(g)$ is semi-simple.
But $G^\delta_0 = s(G)$ is a Riemannian symmetric space and hence a complete Riemannian manifold. It follows that $exp: \frk{g}^\sigma \to G^\delta_0$
is onto, and hence $s(g) = exp(X) = [exp(X/2)]^2$ for some $X \in G^\delta_0$. We deduce that $s(g) = s(exp(X/2))$ and therefore
$Ad_H(s(\sigma(g))) = Ad_H(exp(X)) = Ad_H(s(g))$.
%\qed
\end{proof}

\subsection{Cohomological Obstructions for Stability}

In the last section we reduced the problem of the stability of $g \in G$ to the problem of the $H$-conjugacy of $s(g)$ and $s(\sigma(g))$.
In this section we construct cohomological obstructions for their conjugacy. 
The constructions are both based on Theorem \ref{proposition: descent lemma}, 
and on an analysis of the relation between cohomologies of $G$ and of its descendants, in the following sense. 

\begin {definition}
Let $(G,H,\theta)$ be a symmetric pair, and $r$ a semi-simple element of $G^\sigma$. The pair
$(Z_{G}(r),Z_{H}(r), \theta)$ is a \textbf{descendant} of the pair $(G,H,\theta)$.
\end{definition}

Note that here we do not require $r$ to be in $G^\delta_0$, but we shall use this 
notion only in that case. 

Let $(G,H,\theta)$ be a symmetric pair. Let $r \in G^\delta_0$. We shall now define two cohomology 
classes associated with $r$. 

For the first one, we have to assume that $\mathbf{G}$ connected over $\bar{F}$, so that it is
stable over $\bar{F}$, by \cite[Corollary 7.1.4]{AG09}. 
Let $(G,H,\theta)$ be a symmetric pair with $G$ connected in the Zariski topology. 
Let $r = s(g)$ be a semi-simple symmetrization. 
The group $H$ act by conjugation on $Ad_\textbf{H}(s(r))$, and by the stability of the pair over 
$\bar{F}$, we have $\bar{r} := s(\sigma(g)) \in Ad_\textbf{H}(s(r))$. 

Since $Ad_\mathbf{H}(s(r)) \cap \mathbf{G}^\sigma(F) \cong \KerH{F}{Z_H(r)}{H}$, we can associate with 
$\bar{r}$ a unique class $[r]_1 \in \KerH{F}{Z_H(r)}{H}$. 

\begin{definition}
	The class $[r]_1$ is called the \textbf{first obstruction} of $r$. 
\end{definition}

Note that the first obstruction vanishes if and only if $r$ and $\bar{r}$ are H-conjugate, or in other words if and only if $r$ is stable. 
In practice, we will use only a second, less well known obstruction. The reason is that it is given 
by a simpler formula. 

Let $r \in G^\delta_0$. Then $g\bar{r}g^{-1} = r$ so $r = g \sigma(g)$ and $\bar{r} = \sigma(g) g$ are 
conjugate in $G$.  
Consider the action of $\sigma$ on $G$. The relation 
$\sigma(ghg^{-1}) = \theta(g) \sigma(h) \theta(g)^{-1}$ allows us to consider 
$G$ as a $\ZZ/2\ZZ$-set equipped with an action of the $\ZZ/2\ZZ$-group $G$. 
Inside $G$ we have the orbit of $r$, namely $Ad_G(r)$. By Theorem 
\ref{proposition: descent lemma} we deduce that 
\[Ad_G(r) \cap G^\sigma / Ad_H(r) \cong \KerH{\theta}{Z_G(r)}{G}.\] 
Since $\bar{r}$ gives an element of the left hand pointed set, we get an associated cocycle 
$[r]_2 \in \KerH{\theta}{Z_G(r)}{G}$, which vanishes if and only if $r$ is stable. 

\begin{definition}
	The class $[r]_2$ is called the \textbf{second obstruction} of $r$. 
\end{definition} 

We shall now identify the second obstruction as a very simple object. 

\begin{proposition}
\label{proposition: formula of second obstruction}
The class $[r]_2$ is the cohomology class represented by $r$. Namely, 
$[r]_2 = [r] \in \HH^1(\theta, Z_G(r))$. 	
\end{proposition}	

\begin{proof}
	Recall how the isomorphism $Ad_G(r) \cap G^\sigma / Ad_H(r) \cong \KerH{\theta}{Z_G(r)}{G}$ is defined. 
	Given a point $x \in Ad_G(r) \cap G^\sigma / Ad_H(r)$, we find 
	$y \in G$ such that $yry^{-1} = x$, and then the resulting cocycle is $\delta(y) = y^{-1} \theta(y)$. 
	In our case, since $g^{-1} r g = \bar{r}$, we get that the corresponding cocycle is 
	$\delta(g^{-1}) = g \sigma(g) = r$.   
\end{proof}

To summarize, we get the following result, which will be extensively used in the sequel.  

\begin{proposition}
\label{proposition: the centralizer criterion}

\label{proposition: symmetrization in centralizer}

\label{proposition: Galois invariant}
	
Let $(G,H, \theta)$ be a symmetric pair. 	
The following are equivalent: 

\begin{itemize}
	\item $r \in G^\delta_0$ is stable. 
	\item $[r]_2 = 1$ in $\HH^1(\theta,Z_G(r))$. 
	\item (if $\mathbf{G}$ is connected) $[r]_1 = 1$ in $\HH^1(F,Z_H(r))$.    
\end{itemize}
\end{proposition}

%Finally, we demonstrate the usefulness of the obstruction-theoretic approach to stability 
%via the following result, which allows to show that a pair is is not stable 
%without referring to a specific unstable element.  
%The criterion is based on the following notion. 

%\begin{definition}
%A torus $\textbf{A} \subseteq \textbf{G}$ is called \textbf{$\theta$-split} if  $\theta(x) = x^{-1}$ for every $x \in \textbf{A}$.
%\end{definition}

%As usual, if $\textbf{A}$ is defined over $F$ we denote its $F$-points by $A$, and refer to $A$ as a $\theta$-split F-torus.

%\begin{theorem}
%\label{theorem: cohomological criterion for stability}
%If a symmetric pair $(G,H,\theta)$ is stable and $A$ is a maximal $\theta$-split $F$-torus of $G$,
%then
%\[\ImH{\theta}{A}{Z_G(A)} \cap \KerH{\theta}{Z_{G}(A)}{G}  = 1.\]
%\end{theorem}

%\begin{proof}
% Assume that $(G,H,\theta)$ is stable, and let
%\[\alpha \in \ImH{\theta}{A}{Z_G(A)} \cap \KerH{\theta}{Z_{G}(A)}{G} .\]
%Let $r$ be a representative of $\alpha$ in $A$. Then $\alpha = [r] = [s^2r]$ for every $s \in A$, as $\theta|_A(x) = x^{-1}$
%so $\delta_x(r) = x^2r$ in $A$.
%We can choose $s \in A$ general enough such that $Z_{G}(s^2r) = Z_{G}(A)$.
%But then \[[r] = [s^2r] \in \HH^1(\theta, Z_{G}(A)) = \HH^1(\theta, Z_{G}(s^2r)).
%\]
%By Proposition \ref{proposition: the centralizer criterion} and the stability of $(G,H,\theta)$, we deduce that $\alpha$ is trivial.
%%\qed
%\end{proof}

\section{Open Orbits in Parabolic Quotients}
\label{section: open orbits in a parabolic quotient}

In this section we analyze the relation between the stability of a pair $(G,H,\theta)$ and another geometric property which we call p-stability, along with two other, less central for us, stability properties. The main notion in this context is the following.

\begin {definition}
\label{def: p-stable}
Let $(G,H,\theta)$ be a symmetric pair. We say that $(G,H,\theta)$ is \textbf{p-stable} if
$H$ has a single open orbit in each parabolic quotient $G/P$.
\end{definition}
It is clearly sufficient to check p-stability on the minimal parabolic subgroups.
\begin{remark}
The total number of $H$-orbits in $G/P$ is finite, see e.g. \cite[Corollary 6.16]{HW93}	
\end{remark}

This property is of interest for us since the decomposition of $G/P$ into $H$ orbits carries information on the relative representation theory of $H$ in $G$.
We shall return to the problem of linking p-stability and stability after recalling some machinery from the theory of involutions of
reductive groups.

\subsection {Preliminaries on Split Parabolic Subgroups and Split Tori}

We recall some facts and notation regarding parabolic subgroups and tori in symmetric pairs, most of which can be found in \cite{HW93}.
In general, by a parabolic subgroup of $G$ we mean the $F$-points of a parabolic subgroup of $\bd{G}$ defined over $F$. 

Recall the notion of a split torus over $F$. 
\begin {definition}
Let $\textbf{A}$ be an algebraic torus defined over $F$. Then $\textbf{A}$ is called $F$-split if the character lattice $X^*(\textbf{A})$ is a trivial $\Gal_{F}$ module.
\end {definition}
In this case, we also call $A=\bd{A}(F)$ an $F$-split torus. 

A reductive group $\textbf{G}$ defined over $F$ is called \textbf{$F$-split} if it has a maximal torus defined over $F$ which is $F$-split.
All the maximal $F$-split tori in $\textbf{G}$ are conjugate by $G$. Moreover, there exist maximal $F$-split tori which are $\theta$-stable,
and such tori exist inside every parabolic subgroup of $\bd{G}$ defined over $F$.

Let $\textbf{A} \subseteq \textbf{G}$ be a $\theta$-stable torus defined over $F$. We denote by $\textbf{A}^+, \textbf{A}^{-}$ its orthogonal and symmetric parts
respectively. Similarly, we denote by $A^+, A^-$ the orthogonal and symmetric parts of its $F$-rational points. Note that $A^+ \cap A^-$ is non-trivial, but it is always a 2-torsion finite group. 

\begin{definition}
A torus $\textbf{A}$ with an involution $\theta:\textbf{A}\to \textbf{A}$ is called $\theta$-split if $\textbf{A} = \textbf{A}^{-}$.
\end{definition}

Let $g \in G$. We would like to define what it means for $g$ to be "diagonalizable over $F$".

\begin {definition}
A semi-simple element $g \in G$ is called \textbf{$F$-split} if $g$ is contained in a maximal $F$-split torus of $G$.
\end{definition}

%\begin {lemma}
%Let $g \in G$ be semi-simple. Then $g$ is $F$-split if and only if $Z_G(g)$ contains a maximal $F$-split torus of $G$.
%\end{lemma}

%\begin{proof}
%There is a bijection between maximal tori containing $g$ and maximal tori of $Z_G(g)$.
%\qed
%\end{proof}

For us, the most important split elements are those contained in a maximal $(\theta, F)$-split torus.
Recall that a torus $A$ over $F$ is called $(\theta,F)$-split if $A$ is $F$-split and $\textbf{A}$ is $\theta$-split. 

\begin{definition}
A semi-simple element $g \in G$ is called $(\theta,F)$-split if $g$ is contained in a maximal $(\theta,F)$-split torus.
\end{definition}

Let $(G,H,\theta)$ be a symmetric pair.
In \cite{HW93} there is a detailed description of the $H$ orbits in parabolic quotients $G/P$ of $G$ by a minimal parabolic subgroup $P\subseteq G$, given in .
Since we are interested only in the open orbits, we recall only the part of the theory which is related to them.

\begin{definition}
Let $(G,H,\theta)$ be a symmetric pair over $F$.
Let $P$ be a parabolic subgroup of $G$. Then $P$ is called a $\theta$-\textbf{split} parabolic subgroup of $G$ if $P$ and $\theta(P)$ are opposite parabolic subgroups of $G$.
\end{definition}

\begin {lemma}
Let $P$ be a $\theta$-split parabolic subgroup of $G$. Then $HP$ is open in $G$.
\end {lemma}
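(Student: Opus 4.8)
The claim is: if $P$ is a $\theta$-split parabolic subgroup of $G$, then $HP$ is open in $G$. The natural approach is to show that the orbit map $H \times P \to G$, $(h,p) \mapsto hp$, is submersive at the identity; since submersions are open, this forces $HP$ to be open. By homogeneity it suffices to check submersivity at a single point, say $e$, which reduces the problem to a statement about Lie algebras: I would show that $\frk{h} + \frk{p} = \frk{g}$, where $\frk{p} = \mathrm{Lie}(P)$. Equivalently, since $\frk{g} = \frk{h} \oplus \frk{s}$, it is enough to prove that the projection of $\frk{p}$ onto $\frk{s}$ along $\frk{h}$ is surjective, i.e. that $\frk{s} \subseteq \frk{h} + \frk{p}$.

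To get at this, I would use the hypothesis that $P$ and $\theta(P)$ are opposite parabolics. Let $\frk{p}^- = \mathrm{Lie}(\theta(P)) = d\theta(\frk{p})$ be the opposite parabolic subalgebra, and let $\frk{l} = \frk{p} \cap \frk{p}^-$ be the common Levi, with $\frk{n}, \frk{n}^-$ the nilradicals of $\frk{p}, \frk{p}^-$ respectively, so that $\frk{g} = \frk{n}^- \oplus \frk{l} \oplus \frk{n}$ and $\frk{p} = \frk{l} \oplus \frk{n}$. Because $P$ and $\theta(P)$ are opposite, $d\theta$ preserves $\frk{l}$ and interchanges $\frk{n}$ with $\frk{n}^-$. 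Now take any $Y \in \frk{s}$ and decompose $Y = Y_{\frk{n}^-} + Y_{\frk{l}} + Y_{\frk{n}}$ according to the triangular decomposition. The key computation is that $d\theta(Y) = -Y$ translates, using $d\theta(\frk{n}) = \frk{n}^-$, into a relation forcing $Y_{\frk{n}^-} = -d\theta(Y_{\frk{n}})$; hence $Y - (Y_{\frk{l}} + Y_{\frk{n}}) = Y_{\frk{n}^-} = -d\theta(Y_{\frk{n}})$, and I want to exhibit $Y$ modulo $\frk{p}$ as something in $\frk{h}$. Concretely, consider $Z := Y_{\frk{n}} + d\theta(Y_{\frk{n}}) = Y_{\frk{n}} - Y_{\frk{n}^-}$ — wait, rather I would form the element $Y_{\frk{n}} + d\theta(Y_{\frk{n}}) \in \frk{h}$ (it is $d\theta$-fixed) and note $Y - \bigl(Y_{\frk{n}} + d\theta(Y_{\frk{n}})\bigr) = Y_{\frk{l}} - d\theta(Y_{\frk{n}})$; then the $\frk{l}$-part, being a $(-1)$-eigenvector of $d\theta|_{\frk{l}}$, together with the remaining $\frk{n}^-$-piece, can be absorbed: I will in fact just directly check that every $Y \in \frk{s}$ can be written as $(\text{element of }\frk{h}) + (\text{element of }\frk{p})$ by setting the $\frk{h}$-part to be $\tfrac12\bigl(X + d\theta(X)\bigr)$ for a suitable $X \in \frk{n}^-$ built from $Y_{\frk{n}^-}$, and showing the difference lands in $\frk{l} \oplus \frk{n} = \frk{p}$.

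Cleaner phrasing: $\frk{h} \supseteq \{X + d\theta(X) : X \in \frk{n}\} $, which projects \emph{isomorphically} onto $\frk{n}^-$ under the projection $\frk{g} \to \frk{n}^-$ along $\frk{l}\oplus\frk{n}$ (since $d\theta(\frk{n}) = \frk{n}^-$ and $\frk{n} \subseteq \frk{p}$). Therefore $\frk{h} + \frk{p} \supseteq \frk{n}^- + \frk{p} = \frk{n}^- \oplus \frk{l} \oplus \frk{n} = \frk{g}$, which is exactly what we need. This shows the action map $H \times P \to G$ is submersive at $e$, hence (by $G$-equivariance / translation) everywhere on $HP$, hence $HP$ is open.

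The only real subtlety — the step I expect to be the main obstacle to write carefully — is justifying that $d\theta$ interchanges $\frk{n}$ and $\frk{n}^-$ and fixes $\frk{l}$ setwise from the bare hypothesis that $P$ and $\theta(P)$ are \emph{opposite}; this requires recalling that opposite parabolics are precisely those whose intersection is a common Levi, that a parabolic is its own normalizer, and that $\theta(P) = P^-$ forces $\theta(\frk{l}) = \frk{l}$ and $\theta(\frk{n}) = \frk{n}^-$. I would cite the standard structure theory (e.g. the references to \cite{hel} already invoked) for these facts and keep the Lie-algebra bookkeeping above to a minimum. Everything else — submersivity implies openness, reduction to a single point by homogeneity of the $H\times P$-action on its orbit — is routine given the machinery set up in Section 4.
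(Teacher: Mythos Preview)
Your proof is correct and follows essentially the same strategy as the paper: reduce to the infinitesimal statement $\frk{g} = \frk{h} + \frk{p}$ and use that $X + d\theta(X) \in \frk{h}$ to cover the missing piece $d\theta(\frk{p})$. The paper's version is slightly slicker in that it bypasses the Levi decomposition entirely: from $\frk{g} = \frk{p} + d\theta(\frk{p})$ (opposite parabolics) and the one-line observation $d\theta(X) = -X + (X + d\theta(X)) \in \frk{p} + \frk{h}$ for $X \in \frk{p}$, one gets $\frk{g} \subseteq \frk{p} + \frk{h}$ immediately, so the ``subtlety'' you flag about $d\theta$ swapping $\frk{n}$ and $\frk{n}^-$ never arises.
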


\begin{proof}
It is sufficient to prove that $H \times P \to G$ is submersive at the identity, or that
$\frk{g} = \frk{h} + \frk{p}$. But as $P$ is $\theta$-split we have \[\frk{g} = \frk{p} + \theta(\frk{p}).\]
The result follows from the inclusion $\theta(\frk{p}) \subseteq \frk{p} + \frk{h}$.
\end{proof}

%\begin{remark}
%In fact, this is true also for $P$ a parabolic subgroup contained in a minimal $\theta$-split parabolic subgroup of $G$.
%This is essentially proved in \cite[Proposition 4.10]{HW93}.
%\end{remark}

We wish to classify minimal parabolic subgroups $P \subseteq G$ such that $HP$ is open.
In fact, every minimal parabolic subgroup of $G$ contains a $\theta$-stable maximal $F$-split torus (see \cite[Proposition 4.7]{HW93}),
unique up to conjugation by the unipotent radical $U_P$.
Fix such a torus $A$ and consider its symmetric part $A^- := A \cap G^\sigma$. Then
$\Phi(A,G)$ is a root system in $X^*(A)$ and
$\Phi(A^-,G)$ is a root system in $X^*(A^-)$ with Weyl group $N_G(A^-)/Z_G(A^-)$, by 
\cite[Proposition 5.9]{HW93}.   
We have a surjection 
\[\pi^- : \Phi(A,G) \to \Phi(A^-,G) \cup \{0\}\] given by restriction along the inclusion $A^-\hookrightarrow A$.

Since the involution $\theta$ stabilizes $A$, it acts naturally on 
$X^*(A)$ in a way which preserve the roots. Indeed, if 
$Ad_a(X) = \chi(a)X$ for $X \in \mathfrak{g}$, then 
\[Ad_{\theta(a)}(\theta(X)) = \theta(\chi(a)X) = \chi(a) \theta(X)\] 
so $\theta(X)$ is a root vector corresponding to the root $\theta(\chi)$. 
Every minimal parabolic subgroup $P \subseteq G$ containing $A$ defines a collection of positive roots $\Phi(A,P) \subseteq \Phi(A,G)$.
We can now state the classification of minimal parabolic subgroups $P$ for which $HP$ is open:

\begin{proposition}
\label{proposition: classification of generic parabolics}
Let $P$ be a minimal parabolic subgroup of $G$, and let $A$ be a $\theta$-stable maximal $F$-split torus of $G$ contained in $P$.
Then $PH$ is open in $G$ if and only if $(A^-)^0 = (A \cap G^\sigma)^0$ is a maximal $(\theta,F)$-split torus of $G$ and
$\pi^-(\Phi(A,P)) \backslash{0}$ is a collection of possitive roots for $\Phi(A^-,G)$
\end{proposition}

\begin{proof}
First, note that if $V$ is a linear space over $\RR$, $C \subseteq V$ is a convex polyhedral cone and $\pi : V \to U$ is a linear projection
then $\pi^-(C)$ is again a convex polyhedral cone.
Let $X^*(A)$ be the character lattice of $A$ and $C \subseteq X^*(A) \otimes \RR$ be the cone spanned by the roots $\Phi(A,P)$.
Since $\Phi(A,G) \subseteq C \cup -C$, we have
\[\Phi(A^-,G) \subseteq \pi^-(C) \cup \pi^-(-C).\]
Thus, since $\pi^-(C)$ is a polyhedral cone, it defines a collection of positive roots
in $\Phi(A^-,G)$ if and only if it does not contain antipodal roots.

We claim that the last condition is equivalent to the 
following one: for every $\alpha \in \Phi(A,P)$, 
either $\theta(\alpha) = \alpha$ or $\theta(\alpha)$ 
is negative. 

Indeed, via the identification 
\[X^*(A^-) \otimes \RR \cong (X^*(A) \otimes \RR)^{-} := 
\{v \in X^*(A) \otimes \RR : \theta(v) = -v\},\] 
the map $\pi^-$ corresponds to the map 
$v \mapsto (v - \theta(v))/2$. 
Thus, we have \[\pi^-(\Phi(A,P)) \backslash \{0\} = 
\{(\alpha - \theta(\alpha)) / 2: 
\alpha \in \Phi(A,P), \theta(\alpha) \ne 0\}.\] 

If $\alpha$ satisfies $\alpha \ne \theta(\alpha) > 0$, 
then $\alpha - \theta(\alpha)$ and $\theta(\alpha) - \alpha$ are two vectors in $\pi^-(C)$ and hence
$\pi^-(\alpha) \in \pi^-(C) \cap -\pi^-(C)$. 
Conversely, if the conditions on the roots are satisfied, 
then we claim that $\pi^-(C)$ is strictly convex.  
Indeed, $\pi^-(C)$ is spanned by the $\alpha_i - \theta(\alpha_i)$, which are, under the assumptions, 
contained in $C$. Since $C$ is strictly convex, it follows that the convex hall of the rays spanned by the 
$\alpha_i - \theta(\alpha_i)$ is convex as well in 
$(X^*(A) \otimes \RR)^{-}$, so that 
$\pi^-(C)$ is strictly convex.  

The condition that $HP$ is open in $G$ is equivalent to the infinitesimal condition
\[\mathfrak{h} + \mathfrak{p} = \mathfrak{g}.\]
Thus, we have to prove that $\mathfrak{h} + \mathfrak{p} = \mathfrak{g}$
if and only if $(A^-)^0$ is a maximal $(\theta,F)$-split torus of $G$, and
for every $\alpha \in \Phi(A,P)$, either
$\alpha = \theta(\alpha)$ or $\theta(\alpha)$ is a negative root.

Let $P = MU^+$  be the Levi decomposition of $P$ corresponding to $A$ for which
Then $M = Z_G(A)$ and $U^+$ is the unipotent radical of $P$.
Let $U^-$ be the unipotent radical of the opposite parabolic subgroup of $G$ containing $A$.
Let $<,>$ denote the Killing form of $\mathfrak{g}$.
With respect to $<,>$, we have
\begin{align*}
&\mathfrak{h}^\bot = \mathfrak{g}^\sigma \\
&\mathfrak{p}^\bot = \mathfrak{u}
\end{align*}
and therefore
\[(\mathfrak{h} + \mathfrak{p})^\bot = \mathfrak{u} \cap \mathfrak{g}^\sigma.\]
It follows that $HP$ is open in $G$ if and only if
$\mathfrak{u} \cap \mathfrak{g}^\sigma = 0$.

Assume first that $\mathfrak{u} \cap \mathfrak{g}^\sigma = 0$.
We have to prove that $A \cap G^\sigma$ is a maximal $(\theta,F)$-split torus and that the condition
on positive roots above is satisfied.

Let $Y \in \mathfrak{g}^\sigma$ be an element which commutes with $\mathfrak{a} \cap \mathfrak{g}^\sigma$.
Let $Y = Y_0 + \sum_{\alpha \in \Phi(A,G)} Y_\alpha$ be a decomposition of $Y$ into a sum of root vectors.
Since $Y$ commutes with $\mathfrak{a} \cap \mathfrak{g}^\sigma$,
$Y_\alpha = 0$ unless $\theta(\alpha) = \alpha$.
It follows that
\[Y = Y_0 + \sum_{\alpha \in \Phi(A,G)^\theta} Y_\alpha.\]
Since $Y \in \mathfrak{g}^\sigma$ we get that
\[Y_\alpha \in \mathfrak{g}_\alpha \cap \mathfrak{g}^\sigma \subseteq \mathfrak{u} \cap \mathfrak{g}^\sigma = 0.\]
It follows that $Y \in \mathfrak{m}$, but then
by the maximality of $\mathfrak{a}$ we must have
$Y \in \mathfrak{a} \cap \mathfrak{g}^\sigma$.
This proves the maximality of $A \cap G^\sigma$.

We next show that for every $\alpha \in \Phi(A,P)$ either $\theta(\alpha) = \alpha$ or
$\theta(\alpha) < 0$.
Indeed, if $\alpha \ne \theta(\alpha) > 0$ then
\[ 0 \ne (\mathfrak{g}_\alpha + \mathfrak{g}_{\theta(\alpha)})\cap \mathfrak{g}^\sigma \subseteq \mathfrak{u} \cap \mathfrak{g}^\sigma = 0\] so we get a contradiction.

Conversely, assume that $A^-$ is maximal and that no $\alpha \in \Phi(A,P)$ satisfies the conditions $\alpha \ne \theta(\alpha)  > 0$.
We have to prove that
$\mathfrak{u} \cap \mathfrak{g}^\sigma = 0$.

Clearly, $\mathfrak{u} \cap \mathfrak{g}^\sigma \subseteq \mathfrak{u} \cap \mathfrak{\theta(u)}$.
since $\mathfrak{u} \cap \mathfrak{\theta(u)}$ is spanned by the $\mathfrak{g}_\alpha$ for which
$\alpha$ and  $\theta(\alpha)$ are both positive,
by the assumption on the roots we deduce that
\[\mathfrak{u} \cap \mathfrak{g}^\sigma = \sum_{\alpha \in \Phi(A,P)^\theta} (\mathfrak{g}_\alpha \cap \mathfrak{g}^\sigma).\]
Thus, it is sufficient to prove that
$\mathfrak{g}_\alpha \cap \mathfrak{g}^\sigma = 0$ for every
$\alpha \in \Phi(A,P)^\theta$.

If $0 \ne Y \in \mathfrak{g}_\alpha \cap \mathfrak{g}^\sigma$ then as in
\cite[Lemma 7.1.11]{AG09}
we can extend $Y$ to an $\mathfrak{sl_2}$ triple
$(X,F,Y)$ such that $X \in \mathfrak{g}_{-\alpha} \cap \mathfrak{g}^\sigma$ and $F \in \mathfrak{h}$.
Since $\alpha = \theta(\alpha)$, $X$ and $Y$ commute with $A^-$.
But then $\mathfrak{a}^- + \Span\{X + Y\}$ is a $(\theta,F)$-split torus in $\mathfrak{g}$
porperly including $\mathfrak{a}^-$, contrary to the maximality of $\mathfrak{a}^-$.
\end{proof}

We finish with the following result of \cite{HW93}, which is useful for the classification of $H$-conjugacy classes of minimal $\theta$-split parabolic subgroups.
\begin{proposition} [{\cite[proposition 4.9]{HW93}}]
\label{proposition: conjugacy criterion for minimal theta-split parabolics}
Let $P,P'$ be minimal parabolic subgroups of $G$, and let $Q,Q'$ be minimal $\theta$-split parabolic subgroups containing $P,P'$ respectively.
If $g \in G$ satisfies $gPg^{-1} = P'$ then $gQg^{-1} = Q'$.
\end{proposition}

\subsection {Different Types of Stability}

There are some other notions, related to stability, that we shall consider.

\begin {definition}
Let $(G,H, \theta)$ be a symmetric pair. The pair is:
\begin {itemize}
 \item \textbf{s-stable} if every $g \in G$ such that $s(g)$ is $(\theta,F)$-split is stable.
 \item \textbf{p-stable} (see Definition \ref{def: p-stable})if $H$ has a unique open orbit in $G/P$ for every parabolic subgroup $P \subseteq G$.
 %\item \textbf{weakly p-stable} if and only if $H$ has a unique open orbit in $G/P$ where $P$ is a $\theta$-split parabolic.
 \item \textbf{t-stable} if all the $\theta$-stable maximal $F$-split tori in $G$ which contain a
        maximal $(\theta,F)$ split torus are conjugate by $H$.
 %\item \textbf{weakly t-stable} if and only if all the maximal $(\theta, F)$ split tori in $G$ are conjugate by $H$.
\end{itemize}

\end{definition}

All these properties are related. Even though not all of them are equivalent, we are able to prove the following scheme of implications:

%\vspace{0.5 cm}

\fbox{
\label{diagram: implications}
 \xymatrix{
\fbox{stable} \ar@{=>}^{(1)}[rd] &\fbox{s-stable}                  \\
\fbox{t-stable}   &\fbox{p-stable} \ar@{=>}^{(2)}[u] \ar@{=>}^{(3)}[l] & \ar@{=>}^{(4)}[l] \fbox{Gelfand prop.}    \\
}
}

%\vspace{0.5 cm}

%\fbox{
% \xymatrix{
%  & \fbox{stable} \ar@{=>}^{\circled{1}}[r] &\fbox{s-stable}           &                  \\
%\fbox{t-stable} \ar@{<=>}^{\circled{2}}[dr] &       &     &\fbox{p-stable}       \\
% &\fbox{weakly t-stable}      &\fbox{weakly p-stable}\ar@{<=>}^{\circled{3}}[ur] \ar@{=>}^{\circled{4}}[uu] \ar@{=>}^{\circled{5}}[l] &  \\
%}
%}

\vspace{0.5 cm}

Implication (1) will be proved in the next section. Implication (4) will be proved in section 7.
Implications (2) and (3) will be proved here after some preparation.

\begin{proposition}[{\cite [Theorem 4.21] {BT65}}]
\label{proposition: BT tori}
Let $G$ be a reductive group over a local field $F$, then
all the maximal $F$-split tori in $G$ are conjugate in $G$.
\end{proposition}

\begin {proposition} [{\cite[Theorem 4.13] {BT65}}]
\label{proposition: BT minimal parabolics}
Let $G$ be a reductive group over a local field $F$. Then all the minimal parabolic subgroups of $G$ are conjugate.
\end {proposition}

\begin {proposition}
\label{proposition: t-stable iff weakly t-stable}
A pair $(G,H,\theta)$ is t-stable if and only if all the maximal $(\theta,F)$-split tori in $G$
are $H$-conjugate.
\end {proposition}

\begin {proof}
We first prove that t-stability implies the conjugacy of maximal $(\theta,F)$-split tori.
Let $(G,H,\theta)$ be a t-stable pair. Let $A, A'$ be two maximal $(\theta, F)$ split tori in $G$, we have to show that they are $H$-conjugate.
Let $T, T'$ be $\theta$-stable maximal $F$-split tori containing $A,A'$ respectively.
By the assumption, there is $h \in H$ such that $h T h^{-1} = T'.$
But then, as $h \in H$, we have \[h A h^{-1} = h T^{-} h^{-1} = (h T h^{-1})^{-} = T'^{-} = A'\]
and therefore $A,A'$ are $H$-conjugate.

Conversely, assume that all the maximal $(\theta,F)$-split tori are $H$-conjugate.
We shall show that $(G,H,\theta)$ is t-stable. Let $T, T'$ be $\theta$-stable maximal $F$-split tori in $G$,
with $T^-, T'^-$ maximal $(\theta, F)$-split. Let $A := T^-$ and $A' := T'^-$. By the assumption,
there is $h \in H$ such that $h A h^{-1} = A'$. Therefore, replacing $T$ by $hTh^{-1}$ we may assume that $A = A'$.
Set $K := Z_H(A)$, which, being a centralzer of a torus, is a reductive subgroup of $H$. Since $T^+$ and $T'^+$ are both maximal $F$-split tori in $K$, by Proposition 
\ref{proposition: BT tori}
there is $k \in K$ such that $k T^+ k^{-1} = T'^+$. But then $kTk^{-1} = T'$ and hence $T,T'$ are $H$-conjugate. %\qed
\end {proof}

We shall now discuss the p-stability of a pair. This require some preliminaries on $\theta$-split parabolic subgroups.

\begin {lemma}
\label{lemma: conjugacy of minimal split parabolics}
Let $(G,H,\theta)$ be a symmetric pair. The minimal $\theta$-split parabolic subgroups of $G$ are all conjugate by $G$.
\end{lemma}

\begin {proof}
Let $Q, Q'$ be minimal $\theta$-split parabolic subgroups of $G$, and let $P,P'$ be minimal parabolic F-subgroups of $G$ contained in $Q,Q'$ respectively.
By Proposition \ref{proposition: BT minimal parabolics} there is $g \in G$ such that $gPg^{-1} = P'$.
But then, by Proposition \ref{proposition: conjugacy criterion for minimal theta-split parabolics} we get that $gQg^{-1} = Q'$. %\qed
\end {proof}

\begin {lemma}
\label{lemma: uniqueness of theta-split torus}
Let $(G,H,\theta)$ be a symmetric pair and let $Q$ be a minimal $\theta$-split parabolic F-subgroup of $G$.
Then $Q$ contains a unique maximal $(\theta, F)$-split torus.
\end{lemma}

\begin{proof}
First note that by \cite[Proposition 4.7]{HW93} $Q$
contains a maximal $(\theta,F)$-split torus.
Let $A \subseteq Q$ be a maximal $(\theta, F)$-split torus. We have $Q \cap \theta(Q) = Z\low{G}(A)$ and since
$Q \cap \theta(Q)$ is reductive, its center is a torus. We then have by the maximality of $A$,
$A = Z(Q \cap \theta(Q))^-$, which depends only on $Q$. %\qed
\end {proof}

\begin{proposition}
\label{proposition: p-stable iff weakly p-stable}
A pair $(G,H,\theta)$ is p-stable if and only if all the minimal $\theta$-split parabolic subgroups of $G$ are conjugate by $H$.
\end{proposition}

\begin{proof}

First assume that the pair is p-stable. Since all the minimal $\theta$-split parabolic subgroups of $G$ are conjugate by $G$,
the variety of $\theta$-split parabolic subgroups is isomorphic as a $G$-variety to $G/Q$ for a minimal $\theta$-split parabolic
subgroup $Q \subseteq G$.
Let $P$ be a minimal parabolic subgroup of $G$ contained in $Q$. We have natural $H$-equivariant submersion
$\pi : G/P \to G/Q$. By the assumption, $H$ has a unique open orbit in $G/P$ and therefore it has an open dense
orbit $\mathcal{O} \subseteq G/P$. But then $\pi(\mathcal{O})$ is an open dense orbit of $H$ in $G/Q$.

We turn to the converse. Note that the p-stability can be checked only for the minimal parabolic F-subgroups,
as the projections $G/P \to G/P'$ are submersions for $P \subseteq P'$. Furthermore, as parabolic subgroups are self-normalizing,
the variety $G/P$ can be identified with the space of all conjugates of $P$.

Assume that all minimal $\theta$-split parabolic subgroups of $G$ are $H$-conjugate.
Let $P,P'$ be two minimal parabolic subgroups of $G$ such that $HP$ and $HP'$ are open on $G$.
By \cite[Proposition 9.2]{HW93}, there are minimal $\theta$-split parabolic F-subgroups $Q,Q'$ containing $P,P'$ respectively. 
By Lemma \ref{lemma: conjugacy of minimal split parabolics}, $Q,Q'$ are $G$-conjugate, hence both correspond to open $H$-orbits in $Ad_G(Q) \cong G/Q$.
By the assumption, there is $h \in H$ such that
$hQh^{-1} = Q'$. Replacing $P$ by $hPh^{-1}$, we may assume that $Q = Q'$.

Let $A, A'$ be $\theta$-stable maximal F-split tori of $G$ contained in $P, P'$ respectively. By Lemma \ref{lemma: uniqueness of theta-split torus},
$A^- = A'^-$. By the conjugacy of maximal split tori in $Z_{H}(A^-)$, there is $k \in Z\low{H}(A^-)$ such that $kAk^{-1} = A'$. Replacing $P$ by $kPk^{-1}$,
we may assume $A = A'$.

The problem is now reduced to a problem on the root system $\Phi(A, G)$. Let \[\pi^-: \Phi(A, G) \to \Phi(A^-, G)\] denote the natural projection.
Let $\Sigma, \Sigma'$ be the positive roots corresponding
to $P,P'$ in $X^*(A)$, respectively. Since both $HP$ and $HP'$ are open in $G$,
we have by Proposition \ref{proposition: classification of generic parabolics} that $A^-$ is a maximal $(\theta,F)$-split torus in $G$ and that $\pi^- (\Sigma) \backslash \{0\}$
and $\pi^- (\Sigma') \backslash \{0\}$ are
collections of positive roots in $\Phi(A^-,G)$. Clearly, both are the positive roots defined by $\Phi(A^-,Q)$, and in particular
they are the same choices of positive roots.

We claim that if $\alpha \in \Sigma$ and $\pi^-(\alpha) \ne 0$ then $\alpha \in \Sigma'$.
Indeed, let $\alpha \in \Sigma$. If $\alpha \ne \theta(\alpha)$
then $\pi^-(\alpha) \in \Phi(A^-,Q)$. But if $\alpha \notin \Sigma'$ then
$\alpha \in - \Sigma'$ which implies that $\pi^-(\alpha) \in - \Phi(A^-,Q)$,
a contradiction to the fact that $\Phi(A^-,Q)$ strictly contained in a half-plane.
By symmetry we have
\[\Sigma \cap (\pi^-)^{-1}(\Phi(A^-,Q)) = \Sigma' \cap (\pi^-)^{-1}(\Phi(A^-,Q)).\]
Let $\Sigma_H = \Sigma \cap Ker(\pi^-)$ and $\Sigma'_H = \Sigma \cap Ker(\pi^-)$.
We wish to prove that there exists an element $h \in N_H(A) \cap Z_H(A^-)$ such that
$h(\Sigma_H) = \Sigma'_{H}$. Then, we would get
$h(\Sigma) = \Sigma'$ since $h$ preserves the fibers of $\pi^-$.

Consider the projection
$\pi^+ : X^*(A) \otimes \RR \to X^*(A^+) \otimes \RR$. The map $\pi^+|_{Ker(\pi^-)}$ is an isomorphism,
so it will be sufficient of prove that we can find $h \in N_H(A)$ such that
$h(\pi^+(\Sigma_H)) = \pi^+(\Sigma'_H)$.

We first claim that $\pi^+(\Sigma_H) \subseteq \Phi(A^+,Z_H(A^-))$. Indeed,
if $\alpha \in \Sigma_H$ then $\mathfrak{g}_\alpha \subseteq Z_{\mathfrak{h}}(A^-)$
since $\mathfrak{u} \cap \mathfrak{g}^\sigma = 0$.

Next, we claim that $\pi^+$ maps $\Phi(A,Z_G(A^-))$ isomorphically into $\Phi(A^+,Z_H(A^-))$.
Indeed, $\Phi(A,Z_G(A^-)) = Ker(\pi^-)$ so $\pi^+$ is injective on $\Phi(A,Z_G(A^-))$.
Furthermore, if $\alpha \in \Phi(A^+,Z_H(A^-))$ and $Y$ is a root vector for $\alpha$ then
$Y$ is automatically a root vector for $A$ in $Z_G(A^-)$ corresponding to a root which lifts $\alpha$,
so $\pi^+$ is a surjection onto $\Phi(A^+,Z_H(A^-))$.

Finally, we claim that $\Sigma_H$ is a collection of positive roots in $\Phi(A,Z_G(A^-))$ and
therefore $\pi^+(\Sigma_H)$ is a collection of positive roots in $\Phi(A^+,Z_H(A^-))$.
To see this, note that $\Sigma_H$ is a subset of $\Phi(A,Z_G(A^-))$ which is
strictly contained in a half-space since $\Sigma$ is. Moreover,
\[\Phi(A,Z_G(A^-)) \subseteq (\Sigma \cup -\Sigma) \cap Ker(\pi^-) = \Sigma_H \cup -\Sigma_H.\]
This shows that $\Sigma_H$ is a collection of positive roots.
By a similar argument, $\pi^+(\Sigma'_H)$ is a collection of positive roots in $\Phi(A^+,Z_H(A^-))$.

Since $A^+$ is a maximal F-split torus in $Z_H(A^-)$,
$\Phi(A^+,Z_H(A^-))$ is a (maybe non-reduced) root system, and its Weyl group acts transitively on the Weyl chambers.
Since both $\Sigma_H$ and $\Sigma'_H$ are collections of positive roots in $\Phi(A^+,Z_H(A^-))$
there is a Weyl group element which maps one to the other.
In particular, there exists an $h \in N_H(A^+) \cap Z_H(A^-)$ such that $h(\Sigma_H) = \Sigma'_H$.
But then $h(\Sigma) = \Sigma'$ so $hPh^{-1} = P'$. It follows that the pair is p-stable.
\end {proof}

We now turn to the proof of the implications (2) and (3).

\begin{theorem} [Implication (3)]
Every p-stable pair is t-stable.
\end{theorem}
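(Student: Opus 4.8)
The plan is to deduce Implication (3) from the two intrinsic characterizations already established, namely Proposition \ref{t-stable iff weakly t-stable} and Proposition \ref{p-stable iff weakly p-stable}. By the former, it suffices to show that all maximal $(\theta,F)$-split tori of $G$ are $H$-conjugate; by the latter, p-stability gives us that all minimal $\theta$-split parabolic F-subgroups of $G$ are $H$-conjugate. So the entire task reduces to the following: given a p-stable pair, pass from the $H$-conjugacy of minimal $\theta$-split parabolics to the $H$-conjugacy of maximal $(\theta,F)$-split tori.

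First I would take two maximal $(\theta,F)$-split tori $A$ and $A'$ of $G$. By \cite[Proposition 4.7]{hel} (or the argument inside Lemma \ref{uniqueness of theta-split torus}), each is contained in a minimal $\theta$-split parabolic F-subgroup $Q$, respectively $Q'$; more precisely, the standard construction shows $A = Z(Q\cap\theta(Q))^-$ and similarly for $A'$, so the containment is canonical. Since the pair is p-stable, Proposition \ref{p-stable iff weakly p-stable} supplies an element $h \in H$ with $hQh^{-1} = Q'$. Conjugating $A$ by $h$, and using that $h$ commutes with $\theta$ (being in $H = G^\theta$), we get $hAh^{-1} = h\,Z(Q\cap\theta(Q))^-\,h^{-1} = Z(Q'\cap\theta(Q'))^- = A'$. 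Thus $A$ and $A'$ are $H$-conjugate, which is exactly the hypothesis needed to invoke Proposition \ref{t-stable iff weakly t-stable} and conclude that the pair is t-stable.

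The only delicate point I anticipate is making sure that a maximal $(\theta,F)$-split torus really is determined by (and sits canonically inside) some minimal $\theta$-split parabolic — i.e., that one cannot have such a torus that fails to lie in any minimal $\theta$-split parabolic F-subgroup. This is handled by the combination of \cite[Proposition 4.7]{hel} and Lemma \ref{uniqueness of theta-split torus}: every maximal $(\theta,F)$-split torus $A$ is contained in a $\theta$-split parabolic, we may shrink it to a minimal one $Q$, and then $A$, being maximal $(\theta,F)$-split and contained in $Q$, must coincide with the distinguished torus $Z(Q\cap\theta(Q))^-$ of $Q$ guaranteed by Lemma \ref{uniqueness of theta-split torus}. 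Given this, the conjugation argument is purely formal — the key structural input is that $\theta$-split parabolics carry a canonical maximal $(\theta,F)$-split torus and that $H$-conjugacy of the parabolics transfers verbatim to $H$-conjugacy of these canonical tori because $H$ centralizes $\theta$.
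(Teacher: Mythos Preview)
Your proposal is correct and follows essentially the same route as the paper: take two maximal $(\theta,F)$-split tori, embed each in a minimal $\theta$-split parabolic, use Proposition~\ref{p-stable iff weakly p-stable} to $H$-conjugate the parabolics, then invoke Lemma~\ref{uniqueness of theta-split torus} to conclude the tori are $H$-conjugate, and finish via Proposition~\ref{t-stable iff weakly t-stable}. Your extra paragraph justifying that a given maximal $(\theta,F)$-split torus actually sits inside some minimal $\theta$-split parabolic (and coincides with its canonical torus $Z(Q\cap\theta(Q))^-$) is a point the paper glosses over, so your version is slightly more careful but not different in substance.
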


\begin{proof}

Let $(G,H,\theta)$ be a p-stable pair. Let $A, A'$ be two maximal $(\theta, F)$-split tori in $G$.
 Let $Q,Q'$ be minimal $\theta$-split parabolic F-subgroups of
$G$ containing $A,A'$ respectively. As $(G, H, \theta)$ is p-stable and by Proposition \ref{proposition: p-stable iff weakly p-stable},
there is $h \in H$ such that $hQh^{-1} = Q'$.
By Lemma \ref{lemma: uniqueness of theta-split torus}, $hAh^{-1} = A'$.
Since by Proposition \ref{proposition: t-stable iff weakly t-stable} the $H$-conjugacy of all the maximal $(\theta,F)$-split tori
is equivalent to $t$-stability, the pair is $t$-stable.
%\qed

\end{proof}

\begin{theorem} [Implication (2)]
Every p-stable pair is s-stable.
\end{theorem}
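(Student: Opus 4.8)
The plan is to take an arbitrary $g\in G$ for which $r:=s(g)$ is $(\theta,F)$-split and to show that $g$ is stable; by the characterization of stable elements as those $g$ for which $s(g)$ and $\bar s(g)$ are conjugate under $H$, it suffices to prove that $r$ and $\bar s(g)$ are $H$-conjugate. The idea is to attach to $g$ a minimal $\theta$-split parabolic subgroup, apply p-stability in the form of Proposition~\ref{p-stable iff weakly p-stable} to bring that parabolic back to a standard position by an element of $H$, and thereby reduce $g$ to an element of a $\theta$-split Levi subgroup whose symmetrization is central --- and such an element is stable for a trivial reason.

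First I would choose a maximal $(\theta,F)$-split torus $A$ containing $r$ (possible, since $r$ is $(\theta,F)$-split), together with a minimal $\theta$-split parabolic $F$-subgroup $Q$ with $A\subseteq Q$ (such $Q$ exists by the theory of $\theta$-split parabolics, \cite{hel}; for it one has $Q\cap\theta(Q)=Z_G(A)$, as in the proof of Lemma~\ref{uniqueness of theta-split torus}). The key point is that $g^{-1}Qg$ is again a minimal $\theta$-split parabolic $F$-subgroup. Indeed, writing $r=s(g)=g\theta(g)^{-1}$ gives $\theta(g)^{-1}=g^{-1}r$, and since $r\in A\subseteq Z_G(A)$ lies in the Levi shared by $Q$ and $\theta(Q)$ it normalizes $\theta(Q)$, whence
\[
\theta\bigl(g^{-1}Qg\bigr)=\theta(g)^{-1}\,\theta(Q)\,\theta(g)=g^{-1}\bigl(r\,\theta(Q)\,r^{-1}\bigr)g=g^{-1}\theta(Q)g,
\]
which is precisely the parabolic subgroup opposite to $g^{-1}Qg$; being $G$-conjugate to $Q$, the subgroup $g^{-1}Qg$ is also minimal.

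Now p-stability and Proposition~\ref{p-stable iff weakly p-stable} provide $h\in H$ with $h(g^{-1}Qg)h^{-1}=Q$. Applying $\theta$ (which fixes $h$) gives $h(g^{-1}\theta(Q)g)h^{-1}=\theta(Q)$ as well, so, parabolic subgroups being self-normalizing, $hg^{-1}\in Q\cap\theta(Q)=Z_G(A)$. Put $z:=gh^{-1}\in Z_G(A)$, so that $g=zh$ with $h\in H$. Using $\theta(h)=h$ one computes $s(g)=z\theta(z)^{-1}=s(z)$ and $\bar s(g)=h^{-1}\bar s(z)\,h$; since $h\in H$ the elements $\bar s(g)$ and $\bar s(z)$ are $H$-conjugate, so $g$ is stable if and only if $z$ is stable.

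To conclude, observe that $s(z)=r\in A$ and that $A$ lies in the center of $Z_G(A)$, so $s(z)$ is a central element of $Z_G(A)$; since $z\in Z_G(A)$ we get $\bar s(z)=z^{-1}s(z)z=s(z)$. Hence $s(z)=\bar s(z)$, so $z$, and therefore $g$, is stable; as $g$ was arbitrary with $(\theta,F)$-split symmetrization, the pair is s-stable. The one step that genuinely requires care is showing that $g^{-1}Qg$ is $\theta$-split --- this is exactly what makes p-stability applicable to it --- and it rests on the identity $r=g\theta(g)^{-1}$ together with the fact that $r$ lies in the Levi $Z_G(A)$ shared by $Q$ and $\theta(Q)$, plus the existence of a minimal $\theta$-split parabolic through $A$; everything after that is formal bookkeeping with symmetrizations.
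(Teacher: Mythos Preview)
Your proof is correct and follows essentially the same route as the paper: attach a $\theta$-split parabolic to the split torus containing $r$, show that its conjugate by $g$ is again $\theta$-split, and invoke p-stability to move $g$ (up to $H$) into the Levi. The one noteworthy difference is in the endgame: the paper, having obtained $gh\in P$, writes $gh=um$ via the Levi decomposition $P=M_PU_P$ and reads off $s(m)=r$ from the uniqueness of Bruhat decomposition; you instead apply $\theta$ to the conjugation relation to force $gh^{-1}\in Q\cap\theta(Q)=Z_G(A)$ directly, which is a bit cleaner and makes the centrality of $r$ in $Z_G(A)$ do all the remaining work. Both arguments land on the same conclusion that $r$ is the symmetrization of an element commuting with it.
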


\begin{proof}

Assume that $(G,H,\theta)$ is p-stable. We have to prove that it is s-stable.
Let $r \in G^\sigma_0$ be a $(\theta, F)$-split element which is the symmetrization of an element $g$.
Let $A^{-}$ be the $F$-points of the identity component of the torus topologically generated by $r$. In particular, $A^-$ is $(\theta,F)$-split and  $Z_G(r) = Z_G(A^{-})$.
Let $P$ be a parabolic subgroup of $G$ corresponding to a choice of positive roots in $\Phi(A^{-}, G)$.
 $G/P$ is isomorphic as a $G$-variety to the variety of all conjugates of $P$ since $P$ is self-normalizing.

Let $P' = g^{-1} P g$.
We claim that both $P$ and $P'$ has open $H$-orbits in $G/P$.
Indeed, by construction $P$ is $\theta$-split, as $P \cap \theta(P) = Z_G(A^{-})$ is a Levi-subgroup.
But
\begin {align*}
\theta(P') = \sigma(g) \theta(P) \theta(g) = && (g \sigma(g) = r) \\
g^{-1} r \theta(P) r^{-1} g  = && (r \in P \cap \theta(P)) \\
= g^{-1} \theta(P) g.&&\\
\end {align*}
Therefore, $\theta(P') \cap P' = g^{-1} (P \cap \theta(P)) g$ is a Levi subgroup of $G$, so $P'$ and $\theta(P')$ are opposite.
We deduce that $HP'$ is open in $G$, so $P'$ has open $H$-conjugacy class, and in fact is $\theta$-split parabolic subgroup of $G$.

Since $(G,H,\theta)$ is p-stable and by Proposition \ref{proposition: p-stable iff weakly p-stable},
there is $h \in H$ such that $h P h^{-1} = P' = g^{-1} P g.$
It follows that $gh$ normalizes $P$. Since $P$ is self-normalizing, $gh \in P$.
Since $r = s(g) = s(gh)$ we may assume that $g \in P$.

Consider the semi-direct decomposition $P = M_P \cdot U_P$ where $M_P = Z_G(A^{-})$ is the Levi part of $P$ and $U_P$ is the unipotent radical.
We can decompose $g$ as $g = u m$ where $u \in U_P$ and $m \in M_P$ (as the product is semi direct the factors can be interchanged!).
We get \[r = s(g) = g \sigma(g) = u(m \sigma(m)) \sigma(u).\]
Both the left hand side and the right hand side are Bruhat decompositions of $r$, so by the uniqueness of Bruhat decomposition we must have $m \sigma (m) = r$.
But $m \in Z_G(A^{-}) = Z_G(r)$ so $r$ is a symmetrization of an element of its centralizer. By Proposition \ref{proposition: symmetrization in centralizer}
$r$ is stable.
\end{proof}

In some cases, we can prove that t-stability and s-stability together imply the p-stability of a pair. 
This however require extra assumptions on $(G,H,\theta)$.
Before we prove this, we shall state and prove a lemma about generic elements in split tori, which will be used several times in the rest of this paper. 

\begin{lemma}
	\label{lemma: generic representative in split torus}
	Let $F$ be a local field of characteristic $0$ and let $A\subseteq G$ be a split torus in a reductive group $G$ over $F$. 
	For every $r\in Z_G(A)$ and every natural number $k$ there exists $s\in A$ such that $Z_G(rs^k)\subseteq Z_G(A)$.  
\end{lemma}

\begin{proof}
	Let $r_s$ denote the semi-simple factor of $r$. By Jordan decomposition, since $r$ and $s$ commutes and $s$ is semi-simple, we have $Z_G(rs^k)\subseteq Z_G(r_ss^k)$ and hence we may assume without loss of generality that $r$ is semi-simple. 
	
	Let $\mathbf{T}$ be the Zariski closure of the subgroup of $\mathbf{G}$ generated by $\mathbf{A}$ and $r$. Then $\mathbf{T}$ is a torus defined over $F$. Let $T=\mathbf{T}(F)$, so that $A\subseteq T$. Moreover, $\mathbf{T}/\mathbf{T}^0$ is finite. Let $n=|\mathbf{T}/\mathbf{T}^0|$ and set $T^0=\mathbf{T}^0(F)$. Then, $\mathbf{A}^{kn} \subseteq \mathbf{T}^0$ and $r^n\in \mathbf{T}^0$ and moreover they together generate $\mathbf{T}^0$ topologically. Since $Z_G(r^ns^{kn})\supseteq Z_G(rs^k)$ we may replace $k$ by $kn$ and $r$ by $r^n$ and assume that $\mathbf{T}$ is a connected torus. Let \[X^*(\mathbf{T},\mathbf{A})=\{\psi \in X^*(\mathbf{T}): \psi|_\mathbf{A}\ne 1\},\] and let $U=\{rs^k:s\in A\}$. Then $U$ is a (non-algebraic) open Zariski-dense subset of $rA$. For every $\psi\in X^*(\mathbf{T},\mathbf{A})$ we have by construction that $\psi|_U$ is not identically $1$, and hence $U_\psi = \{x\in U : \psi(x)=1\}$ is closed with empty interior (note that $\psi|_{U}$ take values in a finite extension of $F$, which is itself a local field). Since $U$ is an open subset of $rA$, which is the $F$-points of an affine veriety over $F$, it is a Baire space, and since $X^*(\mathbf{T},\mathbf{A})$ is countable there is $x\in U$ which is not in any of the $U_\psi$-s. Fix such $x$, and let $\mathbf{T}'\subseteq \mathbf{T}$ be the Zariski closure of the subgroup generated by $x$. Since $\psi(x)\ne 1$ for every $\psi\in X^*(\mathbf{T})$ for which $\psi|_\mathbf{A}\ne 1$, we deduce that $\mathbf{A}\subseteq \mathbf{T}'$. It follows that $Z_G(x)\subseteq Z_G(A)$.       
\end{proof}

\begin {theorem}
Let $(G,H,\theta)$ be a symmetric pair, and assume that $G$ has a maximal torus which is $(\theta,F)$-split. 
If $(G,H,\theta)$ is t-stable and s-stable, then in is also p-stable.
\end {theorem}

\begin{proof}
First note that in this case the minimal $\theta$-split parabolic subgroups in this case are precisely the
$\theta$-split Borel sub-groups of $G$.
Thus,by Proposition \ref{proposition: p-stable iff weakly p-stable} it will suffice to prove that
every two $\theta$-split Borel subgroups of $G$ are $H$-conjugate.
Let $B$,$B'$ be $\theta$-split Borel subgroups.
Let $A,A'$ be $\theta$-stable maximal F-split tori of $G$ contained in $B,B'$ respectively.
Since the pair is t-stable we may assume that $A = A'$.

By the transitivity of the action of the Weyl group on the Weyl chambers,
there exists $w \in W_G(A)$ such that $w(\Phi(A,B)) = \Phi(A,B')$.
But then since both $B$ and $B'$ are $\theta$-split,
\[-\theta(w)(\Phi(A,B)) = \theta(w)(\theta(\Phi(A,B))) =\theta(w(\Phi(A,B))) = \theta(\Phi(A,B')) = -\Phi(A,B')\]
so
\[\theta(w)(\Phi(A,B)) =  \Phi(A,B') = w(\Phi(A,B))\]
and since the action of the Weyl group on the Weyl chambers is free we deduce that
$\theta(\omega) = \omega.$
Let $g \in N_G(A)$ be a representative of $w$. We have $r := s(g) \in Z_G(A) = A$
since $G$ is split and connected.
It follows that $s(g) \in A \cap G^\sigma = A^-$.

For every $y \in A^-$ we have  \[s(yg) = yg\theta(g)^{-1}y = yry = ry^2.\]
By Lemma \ref{lemma: generic representative in split torus} we can choose $y \in A^-$ for which
$Z_G(ry^2) = Z_G(A^-)$.
Replacing $g$ with $yg$ we may assume that
$Z_G(r) = Z_G(A^-)$.

Since the pair is s-stable and $r$ is $(\theta,F)$-split and a symmetrization of an element in $G$, 
we have $r = s(z)$ for $z \in Z_G(r) = Z_G(A^-)$.
The equality $s(g) = s(z)$ implies that $z = gh$ for some $h \in H$.
Replacing $B$ with $h^{-1}Bh$ we may assume that $g = z$,
and hence that $zBz^{-1} = B'$.

Since $\Phi(A,B)$ and $\Phi(A,B')$ are
conjugate by $Z_G(A^-)$, which preserves the fibers of the map
$\pi^-:\Phi(A,G) \to \Phi(A^-,G)$, we get that
that 
\[\Phi(A^-,B) = \Phi(A^-,B').\]
Since $B$ and $B'$ are $(\theta,F)$-split Borel subgroups containing the torus $A$, we get the equalities 
$\Phi(A,B) = (\pi^-)^{-1}(\Phi(A^-,B))$ and
$\Phi(A,B') = (\pi^-)^{-1}(\Phi(A^-,B')).$
Since
$\Phi(A^-,B) = \Phi(A^-,B')$, we deduce that
$\Phi(A,B) = \Phi(A,B')$. Finally, this implies that $B = B'$ and hence that the pair is $p$-stable.
\end{proof}

\subsection {Verification Methods for s,t, and p-Stability}

In this section we describe our method to verify the various stability properties. In the next section we will use this method to some specific cases.

We have the first and second obstructions, which in principle allows us to check stability.
In general this is done by considering the different subgroups (maybe non-split ones), which form the stabilizers
of semi-simple symmetric elements and computing their cohomologies. 
We are ready to state and prove the criterion of $s$-stability of symmetric pairs. 

\begin {theorem}
\label{theorem: criterion for s-stability}
A symmetric pair $(G,H,\theta)$ is s-stable if and only if for every maximal $(\theta,F)$-split torus $A\subseteq G$ ,
\begin{equation}
\ImH{\theta}{A}{Z\low{G}(A)} \cap \KerH{\theta}{Z\low{G}}{G} = 1.
\end{equation}
\end{theorem}

\begin {proof}
In one direction, suppose that the pair is s-stable. If
\[r \in \ImH{\theta}{A}{Z\low{G}(A)} \cap \KerH{\theta}{Z\low{G}}{G},\]
then since $[r]$ is trivial in $\HH^1(\theta,G)$, $r$ is a symmetrization in $G$ and the stability of the pair implies that $[r]$ is the trivial
cocycle of $Z_G(r)$.
Replacing $r$ by $rs^2$ for $s \in A$ has no effect on the cohomology class of $r$ in $Z_G(A)$. By Lemma \ref{lemma: generic representative in split torus} we can choose
$s$ such that $Z\low{G}(s^2r) = Z\low{G}(A)$. It follows that $[r]$ is the trivial cohomology class class already in $\HH^1(\theta, Z_G(A))$, and we got that
\[\ImH{\theta}{A}{Z\low{G}(A)} \cap \KerH{\theta}{Z\low{G}}{G} = 1.\]

The other direction is easier. Let $r$ be a $(\theta,F)$-split element and let $A$ be a maximal $(\theta,F)$-split torus containing $r$.
By the assumption $[r]$ is trivial in $\HH^1(\theta,Z_G(A))$, and since 
$Z_G(A) \subseteq Z_G(r)$ we get that $[r]$ is trivial in $\HH^1(\theta, Z_G(r))$.
%\qed
\end{proof}

We shall now discuss t-stability. In order to give a cohomological criterion for it, we
need a fact about the conjugacy of $(\theta, F)$-split tori.

\begin {proposition}[{\cite[Lemma 10.3]{HW93}}]
Let $(G, H, \theta)$ be a symmetric pair. All the $(\theta, F)$-split tori in $G$ are conjugate by elements of $G$.
\end{proposition}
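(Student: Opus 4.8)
We must show that any two maximal $(\theta,F)$-split tori $A,A'\subseteq G$ are conjugate by an element of $G$ — for non-maximal tori the statement fails on dimension grounds, so ``maximal'' is to be understood here. The plan is to reduce this to the conjugacy of minimal $\theta$-split parabolic $F$-subgroups, which we already have from Lemma \ref{conjugacy of minimal split parabolics}, exploiting the bijection of Lemma \ref{uniqueness of theta-split torus} between a minimal $\theta$-split parabolic $F$-subgroup and its unique maximal $(\theta,F)$-split torus.

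The first step is to attach to a maximal $(\theta,F)$-split torus $A$ a minimal $\theta$-split parabolic $F$-subgroup for which $A$ is the distinguished torus of Lemma \ref{uniqueness of theta-split torus}. Since $\textbf A$ is $F$-split, choose $\lambda\in X_*(\textbf A)$ in general position, so that $Z_{\textbf G}(\lambda)=Z_{\textbf G}(\textbf A)$; then $\textbf P:=\textbf P_{\textbf G}(\lambda)=\{g:\lim_{t\to 0}\lambda(t)g\lambda(t)^{-1}\ \text{exists}\}$ is a parabolic $F$-subgroup with Levi $Z_{\textbf G}(\textbf A)$. Because $\textbf A$ is $\theta$-split we have $\theta\circ\lambda=\lambda^{-1}$, hence $\theta(\textbf P)=\textbf P_{\textbf G}(\lambda^{-1})$ is the parabolic opposite to $\textbf P$, so $P$ is a $\theta$-split parabolic $F$-subgroup. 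Invoking the rationality input of \cite{hel} already used in the proof of Lemma \ref{uniqueness of theta-split torus} (every $\theta$-split parabolic $F$-subgroup contains a maximal $(\theta,F)$-split torus), the maximality of $A$ forces $P$ to be \emph{minimal} among $\theta$-split parabolic $F$-subgroups, and then Lemma \ref{uniqueness of theta-split torus} identifies $A$ with the canonical torus $Z(P\cap\theta(P))^{-}$ of $P$.

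Granting this, the conclusion is short. Let $P\supseteq A$ and $P'\supseteq A'$ be the minimal $\theta$-split parabolic $F$-subgroups just produced. By Lemma \ref{conjugacy of minimal split parabolics} there is $g\in G$ with $gPg^{-1}=P'$. The point is that, because both $P$ and $P'$ are $\theta$-split, one can choose such a $g$ with $g^{-1}\theta(g)\in Z_G(A)$; for such a $g$, conjugation by $g$ commutes with $\theta$ on $A$, so it carries the $\theta$-split torus $A$ to a $\theta$-split torus $gAg^{-1}\subseteq P'$, which is then a maximal $(\theta,F)$-split torus of $P'$. By the uniqueness in Lemma \ref{uniqueness of theta-split torus}, $gAg^{-1}=A'$, as desired.

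The main obstacle is exactly the interaction of conjugation with the involution: the assignment ``maximal $(\theta,F)$-split torus $\leftrightarrow$ minimal $\theta$-split parabolic'' is \emph{not} $G$-equivariant, since a general $g\in G$ does not commute with $\theta$, so after conjugating $P$ onto $P'$ one is left with a $\theta$-twisted conjugacy problem — in effect in the relative Weyl group $W_G(A)$ — which must be solved to make the conjugating element compatible with $\theta$. Establishing that this is always possible, together with the minimality assertion in the first step, is the technical heart of the statement and is precisely what requires Helminck's structure theory; the rest is bookkeeping.
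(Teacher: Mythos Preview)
The paper does not give its own proof of this proposition; it simply cites \cite[Lemma 10.3]{hel} and uses the result as a black box. So there is no argument in the paper to compare against.

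As to your proposal: you have correctly isolated the two places where real work is needed --- showing that the parabolic $P=P_G(\lambda)$ you build from a maximal $(\theta,F)$-split torus $A$ is \emph{minimal} among $\theta$-split parabolic $F$-subgroups, and showing that the conjugating element $g$ with $gPg^{-1}=P'$ can be adjusted so that $g^{-1}\theta(g)\in Z_G(A)$ --- but you have not actually carried out either step. You say in so many words that ``establishing that this is always possible\ldots is precisely what requires Helminck's structure theory,'' which amounts to re-citing \cite{hel} rather than proving the statement. So what you have written is a plausible outline, not a proof.

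One further caution about the strategy. Your reduction routes through Lemma~\ref{conjugacy of minimal split parabolics} and Lemma~\ref{uniqueness of theta-split torus}, both of which ultimately rest on results of \cite{hel} (Propositions 4.7 and 4.9 there). In \cite{hel} the conjugacy of maximal $(\theta,F)$-split tori is part of the same circle of rationality results, so before trusting this route you would need to check that those inputs do not themselves depend on Lemma 10.3 --- otherwise the argument is circular. The $\theta$-compatibility step you flag is genuinely delicate: conjugation by a general $g\in G$ need not send a $\theta$-split torus to a $\theta$-split torus, and correcting $g$ inside $P'$ (say by an element of the unipotent radical to match Levi subgroups) still leaves a Weyl-group obstruction that has to be killed. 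That is exactly the content one must extract from \cite{hel}; absent that, the argument is incomplete.
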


\begin {theorem}
\label{theorem: conjugacy of split tori}
Let $(G,H, \theta)$ be a symmetric pair, and let $A$ be a maximal $(\theta, F)$-split torus in $G$.
The maximal $(\theta, F)$-split tori in $G$ are classified up to $H$-conjugacy be the set
\[\ImH{\theta}{Z_G(A)}{N_G(A)} \cap \KerH{\theta}{N\low{G}(A)}{G}.\]
In particular, the pair is $t$-stable if and only if
\begin{equation}
\label{equation: eqn criterion for t-stability}
\ImH{\theta}{Z_G(A)}{N_G(A)} \cap \KerH{\theta}{N\low{G}(A)}{G} = 1.
\end{equation}
\end{theorem}

\begin {proof}
Set $X = \ImH{\theta}{Z_G(A)}{N_G(A)} \cap \KerH{\theta}{N\low{G}(A)}{G}$.
Let $\mathcal{T}$ denote the set tori $A \subseteq G$ which conjugate to a maximal $(\theta,F)$-split torus in $G$.
Let $A$ be any maximal $(\theta,F)$-split torus in $G$ and take it to be the base point of $\mathcal{T}$.
By Theorem \ref{theorem: conjugacy of split tori}, $G$ acts transitively on $\mathcal{T}$ by conjugation.
We have a natural action of $\theta$ on $\mathcal{T}$ via $A \mapsto \theta(A)$.
By Theorem \ref{proposition: descent lemma} we have

\[\mathcal{T}^\theta / Ad_H \cong \KerH{\theta}{ N_G(A)}{G}\]
Under this identification, the torus
$A' = gAg^{-1}$ corresponds to the cocycle $g^{-1}\theta(g) \in Z^1(\theta,N_G(A))$.
We wish to determine those $g$ for which $gAg^{-1}$ is maximal $(\theta,F)$-split.
This is the case exactly when
$\theta(gag^{-1}) =ga^{-1}g^{-1}$ for every $a \in A$.
But $\theta(a) = a^{-1}$ for $a \in A$ so in this case
$\theta(g) a^{-1} \theta(g)^{-1} = g a^{-1} g^{-1}$
which means that $g^{-1} \theta(g)$ commutes with $a$ for every $a \in A$.
Thus, the cocycles corresponding with maximal $(\theta,F)$-split tori
are exactly these which live in $\ImH{\theta}{Z_G(A)}{N_H(A)}$.
\end{proof}

Finally, we treat the p-stability.

\begin{theorem}
\label{theorem: criterion for p-stability}
The $H$-conjugacy classes of minimal $\theta$-split parabolic subgroups of $G$ are
in 1-1 correspondence with the set
\[\KerH{\theta}{Z_G(A)}{G}\]
for a maximal $(\theta,F)$-split torus.
In particular, the pair $(G,H,\theta)$ is p-stable if and only if
\[\KerH{\theta}{Z_G(A)}{G} = 1.\]
\end{theorem}

\begin{proof}
Let $\mathcal{P}$ denote the set of minimal $\theta$-split parabolic subgroups of $G$.
Let $P$ be a minimal $\theta$-split parabolic subgroup of $G$.
Consider $G/P$ as the variety of conjugates of $P$.
Let $\theta$ acts on $G/P \times G/P$ by
$\tilde{\theta}(P',P'') = (\theta(P''),\theta(P'))$.
Let $\Delta_\theta = (G/P \times G/P)^\theta$.
Let $\mathcal{O} \subseteq G/P \times G/P$ denote the unique open $G$-orbit which consists of
pairs $(P',P'')$ of opposite parabolic subgroups.

By Lemma \ref{lemma: conjugacy of minimal split parabolics},
the minimal $\theta$-split parabolic subgroups are all conjugates
$P$. Let $\pi_1 : G/P \times G/P \to G/P$ be the projection onto the first factor.
We deduce that $\mathcal{P} \cong \pi_1(\Delta_\theta \cap \mathcal{O})$.
Since $\pi_1|_{\Delta_\theta}$ is injective and $H$-equivariant,
we have
\[\mathcal{P}/Ad_H \cong (\Delta_\theta \cap \mathcal{O}) /H \cong \mathcal{O}^\theta/H.\]

By Theorem \ref{proposition: descent lemma} we get
\[\mathcal{P}/Ad_H \cong \mathcal{O}^{\tilde{\theta}}/H \cong \KerH{\theta}{\Stb_{G}(P,\theta(P))}{G}.\]
But
\[\Stb_G((P,\theta(P))) = N_G(P) \cap N_G(\theta(P)) = P \cap \theta(P) = Z_G(A^-)\]
where $A^-$ is a maximal $(\theta,F)$-split torus contained in $P$, by
\cite[Lemma 4.6]{HW93}.
We deduce that
\[\mathcal{P}/H \cong \KerH{\theta}{Z_G(A)}{G}.\]
\end{proof}

At this point, we are ready to prove the remaining implication between stabilities, namely that stability implies p-stability.
Before we prove it, we need a lemma:

\begin{lemma}
\label{lemma: semi-simple rep}
Let $(G,H,\theta)$ be a symmetric pair, and let $x \in \HH^1(\theta,G)$. Then $x$ can be represented by a semi-simple element.
\end{lemma}

\begin{proof}
Let $r$ be a representative of $x$, and let $r = r_s r_u$ be a Jordan decomposition of $r$.
Let $\mathbf{U}$ be the Zariski closure of the subgroup of $G$ generated by $r_u$ and $U = \mathbf{U}(F)$. Then $U$ is an $F$-subgroup of $G$ containing $r_u$ which centralizes $r_s$.
Then, by \cite[Lemma  0.6]{HW93} we have $r_u = y^{-1}\theta(y)$ for some $y \in U$.
But then, as $y$ commutes with $r_s$ we get 
\[x = [r] = [r_s r_u] = [y^{-1}r_s\theta(y)]  = [r_s]\] in $\HH^1(\theta,G)$.
%\qed
\end{proof}

\begin{theorem} [{Implication (1)}]
\label{theorem: stable then p-stable}
A stable symmetric pair is p-stable.
\end{theorem}

\begin{proof}

Let $(G,H,\theta)$ be a stable pair.
By Theorem \ref{theorem: criterion for p-stability}),
it suffices to prove that,
for a maximal $(\theta,F)$-split torus $A\subseteq G$, the map $\HH^1(\theta,Z_{G}(A)) \to \HH^1(\theta, G)$ has trivial kernel.

Let $[r] \in \KerH{\theta}{Z_G(A)}{G}$. 
Choose a representative $r\in Z_G(A)^\sigma$. By Lemma \ref{lemma: generic representative in split torus}, we may find $s\in A$ such that $Z_G(rs^2)\subseteq Z_G(A)$. But then, since $(G,H,\theta)$ is stable, $[rs^2]$ is trivial in $\HH^1(\theta,Z_G(rs^2))$ and hence also in $\HH^1(\theta,Z_G(A))$. It follows that the pair is $p$-stable.  
\end{proof}

Empirically, it seems that in the Archimedean case, at least if $G$ is connected, the converse also holds.
\begin{conjecture}
If $(G,H,\theta)$ is a p-stable symmetric pair over $\RR$ and  $G$ is connected in the real topology,
then $(G,H,\theta)$ is stable.
\end{conjecture}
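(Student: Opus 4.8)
Since this is a conjecture --- the introduction records that the equivalence of stability and p-stability over $\RR$ is still open --- what follows is a proposed line of attack and an attempt to locate where it currently breaks. By Theorem \ref{stable then p-stable} only the implication ``p-stable $\Rightarrow$ stable'' is at issue. I would begin by translating both properties into cohomology: by Theorem \ref{criterion for p-stability}, p-stability of $(G,H,\theta)$ is the single equality $\KerH{\theta}{Z_G(A)}{G}=1$ for a maximal $(\theta,F)$-split torus $A$, while by Theorem \ref{the three are equivalent} together with Proposition \ref{the centralizer criterion}, stability is equivalent to the assertion that for \emph{every} semisimple $r\in S_0$ the class $[r]$ is trivial in $H^1(\theta,Z_G(r))$. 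The first reduction is that the $(\theta,F)$-split elements are already handled: if $r\in S_0$ is semisimple and $\overline{\langle r\rangle}$ is connected, then it is a $\theta$-split torus defined over $F$, so $r$ lies in a maximal $(\theta,F)$-split torus and triviality of $[r]$ in $H^1(\theta,Z_G(r))$ is exactly s-stability of $r$, which follows from p-stability by Implication (2). Thus the conjecture is really a statement about the \emph{anisotropic} semisimple elements of $S_0$ --- those $r$ for which $\overline{\langle r\rangle}$ has nontrivial component group --- and these are precisely the elements responsible for the failure of the implication over $\QQ_p$ in the quaternionic example of Section 5.

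For such an $r$ the natural move is to pass to the descendant $(Z_G(r),Z_H(r),\theta)$, in which $r$ is a central semisimple element, and to argue by induction on $\dim\mathbf G$: one would try to show that the relevant descendants of a p-stable pair remain p-stable, apply the inductive hypothesis there, and then conclude, via Proposition \ref{symmetrization in centralizer}, that $r$ is stable. The archimedean hypotheses would enter in two ways. Over $\RR$ the cohomology $H^1(\theta,-)$ of a reductive group is finite and has an explicit Tate--Nakayama/Borovoi description in terms of tori, which would reduce each obstruction $[r]$ to a finite computation; and connectivity of $G(\RR)$ would be used to control the component groups $\overline{\langle r\rangle}/\overline{\langle r\rangle}^{\circ}$ and, more generally, the $\pi_0$ of the groups appearing in the induction.

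The hard part --- and the reason the statement remains conjectural --- is exactly this inductive step, and three genuine obstacles stand in the way. First, the implication is \emph{false} over $\QQ_p$, so no purely cohomological or formal argument can succeed: a proof must use the archimedean place essentially, not merely as a finiteness device. Second, the hypothesis ``$\mathbf G(\RR)$ connected in the real topology'' is not inherited by the subgroups $Z_G(A)(\RR)$ that occur as descendants (already $Z_{SL_2}$ of a split torus is a form of $\GL_1$, and $\GL_1(\RR)$ is disconnected), so the induction needs a more robust invariant that records how $G$ meets the components of its descendants. Third, unlike the Riemannian case of Theorem \ref{riemannian pairs}, where the argument is immediate because $exp\colon\frk{s}\to S_0$ is onto, here $S_0$ is only a pseudo-Riemannian symmetric space and $exp$ need not surject even onto $S_0^{\circ}$, so the ``continuous'' part of the problem must be handled by hand. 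Finding the right combination of a robust induction, an archimedean estimate, and a description of the anisotropic part is exactly what the conjecture is asking for.
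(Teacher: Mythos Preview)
You are right that this is a conjecture and the paper does not prove it. The paper's entire discussion consists of the sentence preceding the conjecture (recording the empirical observation) together with a one-line remark: ``We hope that some special features of the cohomology of real pairs might help, such as the results in \cite{adams}.'' There is nothing further to compare against; your outline is considerably more detailed than anything the paper offers, and your invocation of a Tate--Nakayama/Borovoi-style description of $H^1$ for real groups is exactly the direction the paper is gesturing toward with its citation of Adams.

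There is, however, a genuine error in your ``first reduction.'' You claim that if $r\in S_0$ is semisimple and $\overline{\langle r\rangle}$ is connected, then $r$ lies in a maximal $(\theta,F)$-split torus and is therefore handled by s-stability. This is false: $\overline{\langle r\rangle}$ is then a $\theta$-split torus \emph{defined over} $F$, but it need not be $F$-\emph{split}, and an $F$-anisotropic $\theta$-split torus is not contained in any $(\theta,F)$-split torus. Your subsequent identification of the remaining case as ``those $r$ for which $\overline{\langle r\rangle}$ has nontrivial component group'' is therefore wrong as well. In the quaternionic counterexample of Section~5 the obstructive element lies in the one-dimensional torus $A=\{a+bk\}$, which is connected; what makes that example work is that $A$ is $F$-\emph{anisotropic}, not that anything is disconnected. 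The correct dichotomy is between $(\theta,F)$-split $r$ (handled by s-stability via Implication~(2)) and $r$ whose $\theta$-split torus has a nontrivial $F$-anisotropic part --- and the latter class is strictly larger than what you describe. This does not alter your overall conclusion that the conjecture is open and that the anisotropic part is the crux, but your reduction does not carve the problem at the right joint.
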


\begin{remark}
We hope that some special features of the cohomology of real pairs might help,
such as the results in \cite{Ad13}.
\end{remark}

We now illustrate the usage of the cohomological method by an example.
\begin {example}
\label{example: example 1}
Not every p-stable pair is stable. For example, let $p \equiv 3 \mod 4$
and consider the pair $(G,H,\theta)$ where $G$ is the quaternions
of norm 1 in the quaternion algebra 
\[\QQ_p[i,j]/(\{i^2 = p, j^2 = -1, ij = -ji\}) := \HH[p, -1],\] and \[\theta(x) = ixi^{-1}= \frac{ixi}{p}.\]
This pair is clearly p, s and t-stable because
there are no parabolic subgroups or split tori or split elements at all.
However, the pair is not stable as we shall prove in the next theorem.
\end{example}

\begin{theorem}
The pair $(G,H,\theta)$ in Example \ref{example: example 1} is unstable for every $p \equiv 3 \mod 4$.
\end{theorem}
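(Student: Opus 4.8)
Here is how I would attack this.

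\medskip

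\noindent\emph{Reduction to a single cohomology class.}
The plan is to run the cohomological criterion of Proposition~\ref{the centralizer criterion}. Since $p\equiv 3\bmod 4$, the element $-1$ is not a square in $\QQ_p$, so $\HH[p,-1]$ does not split and hence is the division quaternion algebra $D$ over $\QQ_p$. Consequently $G=D^1$ is compact, every element of $G$ is semi-simple and \emph{every} double coset $HgH$ is closed, so the pair fails to be stable as soon as some single $g\in G$ fails to be stable. By Proposition~\ref{the centralizer criterion} it therefore suffices to exhibit one $g\in G$ with $[s(g)]$ nontrivial in $H^1(\theta,Z_G(s(g)))$. I record that $H=G^\theta=Z_G(i)=\QQ_p(i)^1$, the norm-one torus of the quadratic field $E:=\QQ_p(\sqrt p)$ (as $i^2=p$).

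\medskip

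\noindent\emph{Analysing the descendant pair.}
Fix $g$ and put $r:=s(g)$, assuming $r\ne\pm 1$. Because $D$ is a division algebra, $\QQ_p(r)\subseteq D$ is a quadratic field, $Z_D(r)=\QQ_p(r)$ and $Z_G(r)=\QQ_p(r)^1$. As $r\in S$ we have $\theta(r)=r^{-1}\ne r$, so $\theta$ preserves $\QQ_p(r)$ and acts on it as its nontrivial Galois automorphism, i.e. it acts on $Z_G(r)=\QQ_p(r)^1$ by inversion. Applying Lemma~\ref{interpretation of cohomology} to the descendant pair $(Z_G(r),Z_H(r),\theta)$ then identifies $H^1(\theta,Z_G(r))$ with $\QQ_p(r)^1/(\QQ_p(r)^1)^2$, the class $[r]$ being the image of $r$ itself; thus $g$ is stable iff $r$ is a square in its own norm-one torus. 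A short computation with the reduced norm and trace — using $\mathrm{tr}(t^2)=\mathrm{tr}(t)^2-2\,N(t)$ and $\QQ_p(r)=\QQ_p\big(\sqrt{\mathrm{tr}(r)^2-4}\big)$ — converts this into the clean statement: $g$ is stable $\iff\ \mathrm{tr}(r)+2\in\QQ_p^{\times 2}$.

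\medskip

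\noindent\emph{Building an unstable element.}
It remains to find $g\in G$ with $\mathrm{tr}(s(g))+2\notin\QQ_p^{\times 2}$ and $s(g)\ne\pm 1$. Writing $g=\alpha+\beta i+\gamma j$ with $\alpha,\beta,\gamma\in\QQ_p$, the condition $g\in G$ reads $\alpha^2+\gamma^2-p\beta^2=1$, a direct expansion gives
\[
s(g)=(\alpha^2-p\beta^2-\gamma^2)+2\alpha\gamma\, j+2\beta\gamma\, ij,
\]
and substituting the norm relation yields $\mathrm{tr}(s(g))+2=4(1-\gamma^2)$. So I need $\gamma$ with $1-\gamma^2$ a non-square that is moreover represented by the anisotropic form $\alpha^2-p\beta^2$, i.e. lies in $N_{E/\QQ_p}(E^\times)=\QQ_p^{\times 2}\sqcup(-p)\QQ_p^{\times 2}$. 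Taking $\gamma=1+2p$ gives $\gamma^2-1=4p(1+p)\in p\,\QQ_p^{\times 2}$ (Hensel, $p$ odd), hence $1-\gamma^2\in(-p)\QQ_p^{\times 2}$ works; and $\gamma\notin\{0,\pm1\}$ forces $(\alpha,\beta)\ne(0,0)$ and therefore $s(g)\notin\QQ_p$, so $s(g)\ne\pm1$. Choosing $\alpha,\beta$ with $\alpha^2-p\beta^2=1-\gamma^2$ finishes the construction: this $g$ is unstable, so the pair is unstable for every $p\equiv 3\bmod 4$.

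\medskip

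\noindent\emph{Where the work is.}
The calculations above are routine; the one point that needs care is to keep the various embedded copies of $\QQ_p(\sqrt p)$ inside $D$ apart. The field $\QQ_p(r)$ is abstractly isomorphic to $E$ but is a \emph{different} subfield of $D$, so although $\theta=\mathrm{Ad}_i$ acts as the identity on $E=\QQ_p(i)$, it acts by inversion on $\QQ_p(r)^1$; this is precisely the mechanism that makes $r$ fail to be a square in its own torus even though $-1=s(j)$ is a genuine symmetrization in $G$, and it is what distinguishes this pair from its split counterpart.
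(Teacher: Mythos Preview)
Your proof is correct and takes a genuinely different route from the paper's.

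The paper works with the fixed maximal $\theta$-split torus $A=\QQ_p[k]^1$ (where $k=ij$) and invokes Theorem~\ref{comological criterion for stability}: it builds a commutative ladder of short exact sequences comparing $A\hookrightarrow\QQ_p[k]^\times$ with $G\hookrightarrow\HH^\times$, applies Hilbert~90 to identify $H^1(\theta,A)\cong B/(\QQ_p^\times)^2\cong\ZZ/2\ZZ$ (with $B=N_{\QQ_p[k]/\QQ_p}(\QQ_p[k]^\times)$), and then checks that the generator $-p$ dies in $\QQ_p^\times/N(\QQ_p[i]^\times)$, so $\KerH{\theta}{A}{G}\ne 1$. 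No explicit unstable $g$ is ever written down.

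You instead go straight through Proposition~\ref{the centralizer criterion}: for any $r=s(g)\ne\pm1$ you identify the descendant $Z_G(r)=\QQ_p(r)^1$ with $\theta$ acting by inversion, reduce the vanishing of $[r]$ to the concrete criterion $\mathrm{tr}(r)+2\in\QQ_p^{\times2}$, and then manufacture an explicit $g=\beta i+\gamma j$ with $\gamma=1+2p$ violating it. Your argument is more elementary and hands-on, and it has the pleasant feature of producing a witness; the paper's argument is more structural and illustrates how the machinery of \S4--5 packages the obstruction. It is worth noting that with your particular choice $\alpha=0$ the element $r=s(g)$ actually lands in the paper's torus $A=\QQ_p[k]^1$, so your explicit cocycle is a representative of exactly the nontrivial class the paper detects abstractly.

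One small cosmetic point: you write $E=\QQ_p(i)=\QQ_p(\sqrt p)$ for the fixed field $H$, and later also use $E$ implicitly for $\QQ_p(r)$; since (as you yourself stress in the final paragraph) keeping these subfields apart is the crux, it would read more cleanly to reserve $E$ for $\QQ_p(i)$ and give $\QQ_p(r)$ its own name throughout.
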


\begin{proof}
Consider the maximal $\theta$-split torus $A = \{x \in G : x = a + bk\}$ where $k := ij$.
We claim that $\KerH{\theta}{A}{G} \ne 1$, and therefore by Proposition \ref{proposition: the centralizer criterion} the pair is not stable,
since it is straight forward to check that $Z_G(A) = A$.

Let $\HH = \HH[p, -1]$ denote the quaternion algebra, then we have an exact sequence
\[\SES{G}{\HH^\times}{\QQ_p^\times},\] where the right arrow is the reduced norm \[N_{\HH/\QQ_p}(x + iy + jz + kw) := x^2 + y^2 - p(z^2 + w^2).\]
Moreover, we have an exact sequence
\[\SES{A}{\QQ_p[k]^\times}{B}\] where $B$ is the subgroup of $\QQ_p^\times$ generated by the norms from $\QQ_p[k]$. These are the elements with the least significant digit a square mod $p$.
The two short exact sequences fit into a commutative diagram of sequences:
\[\xymatrix{
1 \ar[r]& A \ar[r] \ar[d] & \QQ_p[k]^\times \ar[r] \ar[d] & B \ar[d] \ar[r] & 1 \\
1\ar[r] & G \ar[r] & \HH^\times \ar[r] & \QQ_p^\times \ar[r] & 1 \\
}\]

Note that $(\HH^\times)^\theta = \QQ_p[i]^\times$.
Using the isomorphism in Theorem \ref{proposition: descent lemma} for both horizontal lines, where we let the middle groups act on the groups on the right, we obtain a commutative square with injective horizontal arrows:

\[\xymatrix{
 B/(\QQ_p^\times)^2 \ar@{^{(}->}[r] \ar[d] & \HH^1(\theta, A) \ar[d]\\
\QQ_p^\times / N_{\HH/\QQ_p}(\QQ_p[i]^\times) \ar@{^{(}->}[r] & \HH^1(\theta, G) \\
}\]

Namely, here the active groups are $\QQ_p[k]^\times$ and $\HH^\times$ and they act on
$B$ and $\QQ_p^\times$ respectively.

The upper horizontal map is an isomorphism, since by Hilbert's 90 Theorem
\[\HH^1(\theta, \QQ_p[k]^\times) = 1.\]

It is therefore sufficient to show that
$B/(\QQ_p^\times)^2$ is non-trivial and the map
\[B/(\QQ_p^\times)^2 \to \QQ_p^\times / N(\QQ_p[i]^\times))\] is trivial.
Indeed, by the diagram above and the fact that the upper arrow is an isomorphism,
$\KerH{\theta}{A}{G}$ is isomorphic to the kernel of the map
$B/(\QQ_p^\times)^2 \to \QQ_p^\times / N(\QQ_p[i])^\times$ which appears in the upper part of the diagram.

But it is immediate that
\[B/(\QQ_p^\times)^2 \cong \ZZ / 2\ZZ\]
and that the left hand side is generated by $-p$. Since $-p = N(i) \in N(\QQ_p[i]^\times)$, the mapping
\[B/(\QQ_p^\times)^2 \to \QQ_p^\times / N(\QQ_p[i]^\times))\] is trivial,
and the pair is unstable.
%\qed
\end{proof}

\section{Calculations}
\label{section: calculations}

In this section we always assume that $F = \RR$ or $F$ is non-Archimedean. 
We shall apply the methods developed in the previous sections to classify the stable pairs among several familes of classical symmetric pairs. For the pairs we consider, we check s-stability, p-stability and stability. We do not mention t-stability as it is unrelated to the Gelfand property to the extent of our knowledge. 
Note, however, that the p-stable pairs are automatically t-stable.

During the calculation, we will use extensively the various  cohomological criteria for the different stability properties, sometimes implicitly.

The results of this sections are summarized in the following table, while the rest of this 
section is dedicated to proving the validity of results presented in it. 
In the columns stable (resp. p-st., s-st.)
we list necessary and sufficient conditions for stability (resp. p-stability, s-stability.).
The columns labeled $\RR$ are for the real pairs while the columns labeled
"non-Archomedean" are for the non-Archimedean pairs.
The entries labeled $(X)$ and $(Y)$ are described below the table.
\newline
\newline

\begin{table}[h] 	
	\begin{tabular}
		{ |l|l l l|l|l|l| }
		\hline
		&\multicolumn{3}{|c|}{$\mathbb{R}$}&\multicolumn{3}{|c|}{non-Archimedean}\\
		\hline
		Pair & Stable \vline& s-st. \vline & p-st. &  Stable & s-st. & p-st. \\
		\hline
		{\scriptsize $SL(V),(GL(V^+)\times GL(V^-))\cap SL(V)$} & \multicolumn{6}{|c|}{$\dim(V^+) \neq \dim(V^-)$} \\
		\hline
		{\footnotesize $GL_F(V),GL_E(V)$} & \multicolumn{6}{|c|}{Always} \\
		\hline
		{\footnotesize $SL_F(V),SL_E(V)$} & \multicolumn{3}{|c|}{Always} &  \multicolumn{3}{|c|}{dim V = 1} \\
		\hline
		{\footnotesize $SL(V(E)),SL(V(F))$} & \multicolumn{6}{|c|}{$\dim(V)=2k+1$} \\
		\hline
		{\footnotesize $O(B),O(B^+)\times O(B^-)$} & \multicolumn{3}{|c|}{$B^+$ or $B^-$ is definite} &\multicolumn{3}{|c|} {\text{not inluded}}  \\
		\hline
		{\footnotesize $U(B),U(B^+)\times U(B^-)$} & \multicolumn{3}{|c|}{$B^+$ or $B^-$ is definite} & (X)  & (Y) & (X) \\
		\hline
		{\footnotesize $GL(V),O(B)$} & \multicolumn{3}{|c|}{$B$ is definite} & \multicolumn{3}{|c|}{$\dim(V)=1$} \\
		\hline
		{\footnotesize $GL(V),U(B)$} & \multicolumn{3}{|c|}{$B$ is definite} & \multicolumn{3}{|c|}{$\dim(V)=1$} \\
		
		\hline
		{\footnotesize $Sp_E(\omega),Sp_F(\omega)$} & \multicolumn{6}{|c|}{Never} \\
		
		\hline
		{\footnotesize $GSp_E(\omega),GSp_F(\omega)$} & \multicolumn{6}{|c|}{$\dim(V) = 2$} \\
		\hline
		{\footnotesize $Sp(\omega), GL(L)$} & \multicolumn{6}{|c|}{Never} \\
		\hline
		
	\end{tabular}
	\caption{List of Stable Pairs}
	\label{table:stable pairs}
\end{table}

For a quadratic form $B$ we denote by $\mu(B)$ the maximal dimension of a subspace on which $B$ vanishes.

\underline{(X)} : $min\{\dim(V^+), \dim(V^-)\} \le 1$.

\underline{(Y)} : $min\{\dim(V^+), \dim(V^-), \mu(B)\} \le 1.$

The pairs $(GL(V),GL(V^+) \times GL(V^-))$ and $GL(V(E)), GL(V(F))$ are known to be stable 
(\cite[Proposition 2.8.5 and Proposition 7.7.4]{AG09}) so the claculation for those pairs is 
omitted.

\subsection {Preliminaries}

In this section we describe some very classical results for classical 
groups, to be used in the calculations of obstructions to various stability 
conditions. 
None of the results presented here is new, however, some of the 
terminology and notation will be different from the classical ones, 
due to our special usage.

\subsubsection {Linear Algebra and Eigenvalues}

For each point $\lambda \in spec(F[x])$ there is a unique monic irreducible polynomial
$m_\lambda(x)$ generating the maximal ideal associated with the point $\lambda$.

Let $V$ be a linear space over a field $F$. Let $r$ be a semi-simple automorphism of $V$.
There is a decomposition \[V = \oplus_{\lambda \in spec(F[x])} V_\lambda(r)\]
where $V_\lambda(r) = \{v \in V : m_\lambda(r)v = 0\}$. We think of $\lambda$ as a point of the algebraic closure
$\bar{F}$, well defined only up to the action of $\Gamma_{\bar{F}/F}$.
The space $V_\lambda(r)$ will be referred to as primary space with primary-value $\lambda$.
The decomposition $V = \oplus_{\lambda \in spec(F[x])} V_\lambda(r)$ will be referred to as the primary decomposition of
$V$ with respect to $r$.

For $\lambda \in spec(F[x])$, let $F_\lambda \cong F[x] / (m_{\lambda}(x))$ denote the residue field of $spec{F[x]}$ at $\lambda$.
If we consider $\lambda$ as an element of $\bar{F}$ then this field is just $F[\lambda]$.
The spaces $V_\lambda(r)$ come endowed with a canonical structure of an $F_\lambda$ vector space,
given by $p(x) v := p(r) v$ for $p$ a polynomial over $F$, considered as an element of $F_\lambda$.
Under this identification $r$ is given by multiplication by $\lambda$ when we think of $\lambda$ as an element of $\bar{F}$.

If $h \in GL(V)$ is an element such that $hr = r^{-1} h$, then $h|_{V_\lambda}$ gives rise to an isomorphism
$V_\lambda \to V_{\lambda^{-1}}$. We have 3 options for the interrelation of $h$ and the primary subspaces:

\begin{itemize}
\item $\lambda$ and $\lambda^{-1}$ are different points of $spec(F[x])$. In this case
$h|_{V_\lambda(r)} : V_\lambda(r) \stackrel{\approx}{\rightarrow} V_{\lambda^{-1}(r)}$.
The set of primary values of this type will be called primary values of type $A$.

\item $\lambda$ and $\lambda^{-1}$ are different as elements of $\bar{F}$, but correspond to the same point of $spec(F[x])$. In this case
$V_\lambda = V_{\lambda^{-1}}$ and $h$ is an $F$-linear automorphism of it. We can consider $Ad_h$ as
an automorphism of $F_\lambda \cong F[r]$ (with the action of conjugating $r$), and then $h$ is a semi-linear automorphism of
$V_\lambda(r)$ over $F_\lambda$,
corresponding to the non-trivial automorphism of $F[\lambda]$ over $F[\lambda + \lambda^{-1}]$.
The set of primary values of this type will be called primary values of type $B$.

\item $\lambda = \pm 1$. In this case $h|_{V_\lambda(r)}$ commutes with $r|_{V_\lambda(r)}$.
The set of primary values of this type will be called primary values of type $C$.
\end{itemize}

We shall denote by $\mathcal{A}(r), \mathcal{B}(r), \mathcal{C}(r)$ the sets on primary values of type $A,B,C$ of $r$
respectively.
The set $\mathcal{A}(r)$ consists of pairs of the form $(\lambda,\lambda^{-1})$. We shall denote by
$\mathcal{A}^+(r)$ any choice of an element from each pair, and by $U_\lambda(r)$ the linear space
$V_\lambda(r) + V_{\lambda^{-1}}(r)$.

\begin{lemma}
\label{lemma: dimension formula}
Let $r \in GL(V)$ and let $h \in GL(V)$ such that $h^2 = Id_V$ and $hrh^{-1} = r^{-1}$.
Then

\[
\dim(V_1(h))
=\frac{1}{2} \left(\sum_{\lambda \in \mathcal{A}(r) \cup \mathcal{B}(r)} \dim(V_\lambda(r)) \right) +
\dim(V_1(r) \cap V_1(h)) + \dim(V_{-1}(r) \cap V_{1}(h))
\]

and

\[
\dim(V_1(rh))
=\frac{1}{2} \left(\sum_{\lambda \in \mathcal{A}(r) \cup \mathcal{B}(r)}\dim(V_\lambda(r)) \right) +
\dim(V_1(r) \cap V_1(h)) + \dim(V_{-1}(r) \cap V_{-1}(h))
\]

\end{lemma}

\begin{proof}
Since for each primary value $\lambda$ of $r$ the space $V_\lambda(r) + V_{\lambda^{-1}}(r)$ is $rh$-invariant and $h$-invariant,
it is enough to prove the statement of the lemma for each such subspace separately.
If $\lambda$ is of type $A$, then $h$ and $rh$ map $V_\lambda(r)$ isomorphically into $V_{\lambda^{-1}}(r)$ and vice versa.
Therefore, the maps $v \mapsto v + h(v)$ and $v \mapsto v + rh(v)$ give linear isomorphisms
of $V_\lambda(r)$ with $(V_\lambda(r) + V_{\lambda^{-1}}(r)) \cap V_1(h)$ and $(V_\lambda(r) + V_{\lambda^{-1}}(r)) \cap V_1(rh)$
respectively. It follows that
\[\dim((V_\lambda(r) + V_{\lambda^{-1}}(r)) \cap V_1(h)) = \dim(V_\lambda(r)) = \frac{1}{2}\dim(V_\lambda(r) + V_{\lambda^{-1}}(r)),\]
so the formula is correct in that case.

Next assume that $\lambda$ is of type $B$. Then $V_\lambda(r) = V_{\lambda^{-1}}(r)$ and
$V_\lambda(r)$ has a structure of a $F_\lambda$ vector space, with $h$ being a semi-linear automorphism,
and the same is true for $rh$.
It follows from Hilbert's 90 Theorem that $V_1(h) \cap V_\lambda(r) \otimes_{F[r + r^{-1}]} F[r] \cong V_\lambda(r)$
and since $F[r] / F[r + r^{-1}]$ is a quadratic extension it follows that
\[\dim(V_1(h) \cap V_\lambda(r)) = \frac{1}{2}\dim(V_\lambda).\]

The case where $\lambda$ is of type $C$ is immediate: if $\lambda = 1$ then $rh = h$ on $V_\lambda(r)$ and if
$\lambda = -1$ then $rh = -h$ on $V_\lambda(r)$.
%\qed
\end{proof}

\subsubsection {Quadratic and Hermitian Forms}

Many of the calculations related to symmetric pairs with $G = O(B)$ or $H = O(B)$ 
involve many considerations related to quadratic forms. Similarly, we shall use various classical facts regarding Hermitian forms. In this section we recall some of these classical facts. This section contains no new results, and is mainly intended to fix notation and recall results that will be used in the computations.

Over a non-Archimedean local field we have the following c
Let $V$ be a linear space over $F$, or over $E$ for $E/F$ a quadratic extension $E/F$. Let $\Quad(V)$ (resp. $\Her_F(V)$) denote the set of symmetric bilinear forms
$B : V \times V \to F$ (resp. Skew-symmetric forms $V \times V \to E$). We identify $B$ with its associated quadratic 
(resp. Hermitian) form $v \mapsto B(v,v)$ and write it simply as $B(v)$.
Let $\mathcal{QF}_n(F)$ (resp. $\mathfrak{Her}_n(F))$ denote the set of equivalence classes of pairs $(B,V)$ of a space with a quadratic (resp. Hermitian) form on it such that $\dim(V) = n$.
For every quadratic form $B \in \Quad(V)$, we let $[B]$ denote the class of $(V,B)$ in $\mathcal{QF}_{\dim(V)}(F)$, 
and similarly for $\Her_F(V)$. 
Let $\mathcal{QF}(F) = \cup_n \mathcal{QF}_n(F)$ and $\mathcal{H}er(E/F) = \cup_n \mathcal{H}er_n(E/F)$.

If $V$ is a vector space over $F$ and $B$ is a quadratic form on $V$
let $B_{E,c}(x,y) := B_E(x,c(y))$. It is an Hermitian form on $V(E)$. This gives a map
\[(\cdot)_{E,c}:\Quad(V) \to \Her_F(V(E)).\]
The image consisting of exactly those forms for which $c(B(c(x),c(y))) = B(x,y)$.

If $B$ is a quadratic form on $V$ and $g: V \to V$ is a linear map such that $B(gv,u) = B(v,gu)$ for every $v,u \in V$ then we denote by $B_g$ the quadratic form $B_g(u,v) = B(gu,v)$.

There is a natural onto map \[Q_n : (F^\times / (F^\times)^2)^n \to \mathcal{QF}_n(F)\]
given by $Q_n([a_1,...,a_n]) = (F^n, \sum_{i = 1}^n a_i x_i^2)$. 
Let $Q : \coprod_n F^\times / (F^\times)^2)^n \to \mathcal{QF}(F)$ be the union of these maps.

Similarly, we have an onto map 
$H_n: (F^\times / N_{E/F}(E^\times))^n \to \mathcal{H}er_n(E/F)$ and we denote by 
$H$ the union of those maps.

Let $\{a,b\}$ denote the Hilbert symbol of $a$ and $b$, defined by
\[\{a,b\} = \begin{cases}
1,  \quad \exists x,y \in F : ax^2 + by^2 = 1 \\
-1, \quad \text{otherwise} \end{cases}.\]
We have several invariants on quadratic forms. For $B \in \Quad(V)$ let:
\begin{itemize}
\item $\det(B) \in F^\times / (F^\times)^2$ be the determinant of $B$, represented as a matrix in some basis of $V$.
\item $\rank(B) = \dim(V)$.
\item $H(B)$ be the Hasse invariant of $B$, i.e. the product of all the Hilbert symbols of pairs of diagonal elements of $B$ in some orthogonal basis.
\item $\mu(B)$ the maximal dimension of a subspace $U \subseteq V$ such that $B|_U = 0$.
\end{itemize}

They clearly descent to give maps
\[\rank : \mathcal{QF}(F) \to \NN\]
\[\det : \mathcal{QF}(F) \to F^\times / (F^\times)^2\]
\[H : \mathcal{QF}(F) \to \{\pm 1\}\]
\[\mu : \mathcal{QF}(F) \to \NN\]
Define similarly on sequences of elements of $F^\times / (F^\times)^2$
\[\rank([a_1,...,a_n]) = n\]
\[\det([a_1,...,a_n]) = \prod_{i = 1}^n a_i\]
\[H([a_1,...,a_n]) = \prod_{1 \le i < j \le n}\{a_i,a_j\}\]
so that for every sequence $\ell$ of elements of $F^\times / (F^\times)^2$ we have
$\rank(\ell) = \rank(Q(\ell))$, $\det(\ell) = \det(Q(\ell))$ and $H(\ell) = H(Q(\ell))$.

For Hermitian forms we consider only the invariants 
$\det : \mathcal{H}er(E/F) \to F^\times / N_{E/F}(E^\times)$ 
and $\rank: \mathcal{H}er(E/F) \to \ZZ$, defined in the oBT65ious way. 
 
For non-Archimedean fields, it turns out to be a complete set of invariants for quadratic or Hernitian forms.

\begin{proposition}[{\cite[Theorem 2.3.7]{Ser73}}, {\cite[Theorem 3.1]{Jac62}}]
\label{proposition: classification of quadratic forms over local fields}
\label{proposition: classification of Hermitian forms}
A quadratic form over a local non-Archimedean field of characteristic $0$ is completely determined up to equivalence by its rank, determinant and
Hasse invariant.

A Hermitian form over a local non-Archimedean local field of characteristic 0 is completely classified by its rank and determinant.  
\end{proposition}

In the case $F = \RR$ this is not the case but Sylvester Theorem states that the rank and signature form a complete set of invariants for quadratic forms and of Hermitian forms.

The following classical result will be useful in some cohomology computations:

\begin{proposition}[Witt Cancellation Theorem]
\label{proposition: Witt cancellation theorem}
Let $B,B',C$ be three quadratic forms over $F$.
If $B \oplus C \equiv B' \oplus C$ then $B \equiv B'$.
In other words, $\cdot \oplus C : \mathcal{QF}(F) \to \mathcal{QF}(F)$ is injective.
\end{proposition}
We shall say that $B \le B'$ if there is a quadratic form $C$ such that $B' \equiv B \oplus C$.
In this case, by Witt Cancellation Theorem we can unambiguously define difference $[B] - [B']$ to be $[C]$.

Let $V$ be a linear space over $F$ and $V^*$ the dual space. We associate with $V$ the \textbf{hyperbolic form} $\mathcal{H}_V$ on
$V \oplus V^*$ given by $\mathcal{H}_V(v + \phi) = \phi(v)$. Its class depends only on $n = \dim(V)$
and we denote $\mathcal{H}_n = [\mathcal{H}_{F^n}]$. Clearly
$\mathcal{H}_n = \oplus_{i = 1}^n \mathcal{H}_1$.

For a quadratic form $B$, we say that a scalar 
$x \in F / (F^\times)^2$ is \textbf{represented by $B$} if there is $v \in V$ such that $B(v) = x$.
Let $\Rep(B)$ denote the set of $(F^\times)^2$ orbits of elements $x \in F$ represented by $B$. 
Clearly $\Rep(\mathcal{H}_V) = F / (F^\times)^2$.

The following result follows easily from the classical Witt Decomposition Theorem. 
\begin{proposition}
\label{proposition: the hyperbolic lemma}
Let $B$ be a quadratic form and $k$ the maximal integer such that $\mathcal{H}_k \le B$.
Then $k = \mu(B)$.
\end{proposition}

%Next, we discuss the relation between quadratic forms over $F$ 
%and over its field extensions. 

\subsection {Some Cohomology Computations}
\label{subsetion: some cohomology computations}
Here we shall calculate the different cohomology sets that we need for the stability calculations. Once again here, most of the results are classical and easy, but we include them for completeness of the exposition.

First, one easily check that the following formulas hold: 
\begin{proposition}
Let $(G,H,\theta)$ be a symmetric pair.
\label{proposition: cohomology of Fli91p}
\label{proposition: cohomology of conjugacy}
\label{proposition: cohomology commutative case}
\begin{itemize}
	\item If $G = H \times H$ and $\theta(x,y) = (y,x)$, then $\HH^1(\theta,G) = \{1\}$.  
	
	\item If $H = Z_G(h)$ and $\theta(x) = hxh^{-1}$ for $h^2 \in Z(G)$, then 
			$\HH^1(\theta,G)$ can be identified with the set $\{h' \in G : h'^2 = h^2\} / \{G-conjugacy\}$, via the map $r \mapsto rh$. The neutral element is then given by $h$.  
	\item If $G$ is commutative and $\theta(x) = x^{-1}$ then $\HH^1(\theta,G) \cong G/G^2$. 
\end{itemize}
\end{proposition}

These observations together with the classification of 
elements of order 2 in classical groups immediately gives us the following formulas for cohomologies.

\begin{proposition}
\label{proposition: cohomology of inner involutions}
In the following cases, $h\in G$ is an element of order 2. 

\begin{itemize} 
\item Let $G = GL(V)$. Then $\HH^1(Ad_h,GL(V)) \cong (\{0,...,\dim(V)\},\dim(V_1(h)))$. 
\item Let $G = SL(V)$. Then $\HH^1(Ad_h,SL(V)) \subseteq  \HH^1(Ad_h,GL(V))$ and via the 
identification of $\HH^1(Ad_h,GL(V))$ with $(\{0,...,\dim(V)\},\dim(V_1(h))$, 
$\HH^1(Ad_h,SL(V))$ correspond to the numbers of the same parity as $\dim(V_1(h))$. 
\item Let $B$ be a quadratic form on $V$ and $G = O(B)$. 
Then \[\HH^1(Ad_h, O(B)) \cong (\{B' \in \mathcal{QF}(F) : \exists C, B' \oplus C \equiv B\}, B|_{V_1(h)}).\] 
\item Let $V$ be a vector space over a quadratic extension $E$ of $F$, and let $B$ be a Hermitian form 
on $V$. Then 
$\HH^1(Ad_h,U(B)) \cong (\{B' \in \mathcal{H}er(E/F) : \exists C, B' \oplus C \equiv B\}, B_{V_1(h)})$
\item Let $V$ be a vector space and $\omega$ a symplectic on $V$. 
 Let $h \in Sp(\omega)$ be a symplectic involution of $V$.
Then $\HH^1(Ad_h,Sp(\omega)) \cong (\{0,2,4,...,\dim(V)\}, \dim(V_1(h))$. This identification is, moreover, 
compatible with the inclusion $Sp(\omega) \subseteq SL(V)$. 
  
\end{itemize}

\end{proposition}	

\begin{proof}
	The cohomology of the form $\HH^1(Ad_h,G)$ is 
	identified via twisting with the set of conjugacy 
	classes of elements of order 2 in $G$. 
	
	In the case $G = GL(V)$, involutions of $V$ are 
	classified up to conjugation by the dimension of their 
	1-aigen space. 
	
	In the case $G = SL(V)$, the determinant restriction 
	impose the parity constraint on this subspace, and it is 
	easy to see that no two conjugacy classes in $SL(V)$ 
	coincide in $GL(V)$. 
	
	In the case of quadratic or Hermitian spaces, 
	the involutions are classified up to conjugacy by 
	the restriction of the form to the 1-aigen space 
	$V_1(h)$. 
	The result follows easily from these 
	classical observations. 
\end{proof}

Next, we compute the cohomology of the involutions arising from 
the action of $\Gal_{E/F}$ via restriction of scalars from $E$ to $F$.
\begin{proposition}
\label{proposition: cohomology of semi-linear involutions}
Let $E/F$ be a quadratic extension. Let $c : E  \to E$ denote conjugation of $E$ over $F$.
The following holds:

\begin{itemize}
\item  $\HH^1(c,GL_n(E)) = 1$ 
\item  $\HH^1(c,SL_n(E)) = 1$ 
\item $\HH^1(c,SP_{2n}(E)) = 1$ 
\item $\HH^1(c,O_E(B_E)) \cong (\{B' \in \mathcal{QF}(F) : B'_E \equiv B_E\}, B)$ 
\item $\HH^1(c,U(B_{E,c})) = (\{B' \in \mathcal{QF}(F) : B'_{E,c} \equiv B\}, B)$
\end{itemize}
\end{proposition}

\begin{proof}
The first equality is Theorem Hilbert's 90. All the others follow from the first and Theorem \ref{proposition: descent lemma},
by considering the action of $GL(V)$ on $E^\times$ via $det$, its action on the spaces of anti-symmetric, symmetric and
Hermitian bilinear forms respectively. Note that up to equivalence there is a unique non-degerenrate anti-symmetric form.
%\qed
\end{proof}

\subsection{Maximal Split Tori of Certain Symmetric Pairs} 

Our aim in this section is to give a systematic calculation of the
maximal $(\theta,F)$-split tori in classical groups, 
together with the induced map  
This computation is important for the verification of 
s-stability, t-stability and p-stability of those pairs, 
since the obstructions for those properies live in the cohomology 
of $A$ and $Z_G(A)$.

The description of the maximal $(\theta,F)$-split tori 
of the classical pairs that we consider is given in terms of split subspaces 
of a linear space, or in a linear space endowed with a bilinear form. 

\begin{definition}
	Let $V$ be a linear space over a field $F$. 
	Let $h : V \to V$ be a map such that $h^2 = \lambda I_V$.  
	A subspace $U \subseteq V$ is called \textbf{$h$-split} 
	if $U \cap h(U) = \{0\}$. 
\end{definition}

Note that the maximal dimension of an $h$-split subspace is 
given by the minimum among $\dim(V_1(h))$ and $\dim(V_{-1}(h))$.  

If the space $V$ is endowed with a quadratic form $B$, we 
restrict the class of subspaces under consideration 
further: 

\begin{definition}
Let $(V,B)$ be a linear space with a non-degenerate bilinear form $B$, 
which is either symmetric or anti-symmetric. 
Let $h : (V,B) \to (V,B)$ be an linear map with scalar square. 
A subspace $U \subseteq V$ is called \textbf{$(h,B)$-split} 
if it is $h$ split and in addition $B|_U = 0$ but 
$B|_{U + h(U)}$ is non-degenerate. 
\end{definition}

Being $(h,B)$ split is equivalent to the existence of a basis 
of $V$ of the form $x_1,...,x_k,y_1,...,y_k,z_1,...,z_l$ in which 
$B$ takes the form  
\[\begin{pmatrix}
0  & A_{k \times k}  & 0 \\ 
A'_{k \times k}  & 0   & 0  \\
0  & 0  & C_{l \times l} 
\end{pmatrix}\] 

and $h$ takes the form 
\[\begin{pmatrix}
0  & I_{k}  & 0 \\
\lambda I_k  & 0   & 0 \\
0  & 0  & H_{l \times l} 
\end{pmatrix}\] 

Given a maximal $h$-split subspace 
$U\subseteq V$, a maximal split torus 
$T \subseteq GL(U)$ and an $h$-stable linear complement 
$W$ of $U + h(U)$ in $V$, we denote by 
$A(U,W,T) \cong T$ the torus given by 
the image of the map 
$j_T : T \to GL(V)$ given by 
$j_T(x) = x \oplus (hxh)^{-1} \oplus I_W$. 
More precisely we consider all those 
matrices which respect the decomposition 
$V = U \oplus h(U) \oplus W$, act as identity on 
$W$, and act via $x \in T$ and $(hxh)^{-1}$ on 
$U$ and $h(U)$ respectively.    
In the basis as above, this means that the elements of 
$A(U,W,T)$ takes the form 

\[\begin{pmatrix}
x  & 0  & 0 \\
0  & x^{-1}   & 0 \\
0  & 0  & I_{l} 
\end{pmatrix}\] 

The tori $A(U,W,T)$ are $Ad_h$-split tori. 
We shall show that all the maximal $Ad_h$-split tori 
of the corresponding pairs arise from this construction.

\begin{proposition}
	\label{proposition: maximal h-split tori GL(V)}
	In all the statements below, $h \in G$ denote an element such that 
	$h^2 \in Z(G)$. 
	\begin{itemize}
		\item The maximal $(Ad_h,F)$ split tori in  
		$GL(V)$ or $SL(V)$ are precisely the tori of the form $A(U,W,T)$ for $U$ a maximal 
		$h$-split subspace.    
		\item Let $B$ be a symmetric, anti symmetric or Hermitian form on a space $V$. 
		The maximal $Ad_h$-split tori in $G(B)$, the stabilizer of the form $B$ in $GL(V)$, 
		are exactly the tori of the form $A(U,(U + h(U))^\bot, T) \cap G(B)$ for $U$ a maximal 
		$(h,B)$-split subspace.  	
\end{itemize}
\end{proposition}

\begin{proof}
	Clearly the tori described in the proposition are all $(Ad_h,F)$-split, 
	and are maximal among tori of the form 
	$A(U,W,T)$ contained in $G$. 
	It remains to show that every $(\theta,F)$-split torus is contained in a torus 
	of the form $A(U,W,T)$. Let $A \subseteq G$ be an $(Ad_h,F)$-split torus. Let  
	$V = \sum_\chi V_\chi$ be a decomposition of $V$ into simultaneous aigen-spaces for 
	$A$. since $h x h^{-1} = x^{-1}$ for every $x \in A$, the set of $\chi$-s appearing in the decomposition. Moreover, in the case where we have a bilinear form as well, 
	the condition $V_\chi \bot V_\psi$ is satisfied unless $\chi = \psi^{-1}$. 
	Choosing a half-space $P^+ \subseteq X^*(A) \otimes \RR$ (i.e. a side of a hyper-plane) which contain no 
	$\chi$ in the decomposition on its boundary, we can now choose 
	$U = \oplus_{\chi \in P^+} V_\chi$ and $W = V_1$. 
	Denote by $T$ the torus generated by the restrictions of the elements of 
	$A$ to $U$. 
	We get immediately that $A \subseteq A(U,W,T)$. 
\end{proof}

In a similar fashion, one can classify maximal $(\theta,F)$-split tori in 
pairs of the form $(G(E),G(F))$ for $G = GL(V),SL(V)$ or  $G(B)$ for 
a symmetric or anti-symetric form $B$.

\begin{definition}
	Let $E/F$ be a quadratic extension. Let $V$ be a vector space over $F$. 
	Let $B$ be a non-degenerate bilinear form on $V$, defined over $F$. 
	Let $c : V(E) \to V(E)$ denote the action of the non-trivial element of 
	$\Gal_{E/F}$. 
	
	An $E$-linear subspace $U \subseteq V(E)$ is called 
	$c$-split if it is $c$-split as an $F$-linear subspace of $V(E)$. 
	It is $(c,B)$-split if it is $(c,B)$-split as an $F$-subspace.    
\end{definition}

 For a maximal $F$-split torus $T \subseteq GL_E(U(E))$, and a complement $W$ which is 
 the orthogonal complement to $U + c(U)$ in the case where we have a form, we denote by 
 $A(U,W,T)$ the torus consisting of matrices of the form 
 $x \oplus c(x)^{-1} \oplus Id_W, \quad x \in T$. All maximal 
 $(c,F)$-split tori in $G$ arise from this construction, 
 just like in the case of inner involutions.

\subsection{Non-s-Stable Pairs}
 
Using the fact that s-stability is implied by p-stabiliy, or by the stability, of a symmetric pair, 
we can use the cohomlogical criterion for s-stability in Theorem \ref{theorem: criterion for s-stability} to exclude the stability and 
p-stability of many pairs.
The strategy for falsifying the s-stability of all those pairs is very simple.
We compute the maximal $(\theta,F)$-split tori and for such pairs, and the induced map on cohomology.

\begin{theorem}
	\label{theorem: some non-s-stable pairs}
	The following pairs are not s-stable. In particular, 
	they are not stable or p-stable. 
	
	\begin{itemize}		
		\item The pair $(SL(V),Z_{SL(V)}(h),Ad_h)$ for $h : V \to V$ 
		an involution with $\dim(V_1(h)) = \dim(V_{-1}(h))$. 

		\item The pair $(SL(V(E))), SL(V(F),c)$ for $E/F$ 
		a quadratic extension, $V$ a vector space over $F$ 
		of even dimension and $c: V(E) \to V(E)$ corresponding to the conjugation of $E$ over $F$.

		\item $(GL(V), O(B), \theta)$ for $\theta(x) = x^{-t}$ if $F = \RR$ and 
		$B$ is not definite, or $F$ is non-Archimedean and $\rank(B) > 1$. 
		\item $(GL(V(E)), U(B), \theta)$ for $E/F$ quadratic extension, $V$ an $F$-vector space of dimension $>1$, $B$ a Hermitian form on $V(E)$ which is non-definite if $F = \RR$ and $\theta(x) = (x^*)^{-1}$, the inverse of the adjoint with respect to $B$.   
		\item $(Sp(\omega), GL(U), Ad_h)$ where $\omega$ is a symplectic form on a vector space $V$, $U \subseteq V$ is a Lagrangian subspace, $h : V \to V$ is an involution with $h^* \omega = -\omega$ and $h(U) = U$. 		
		\item $(O(B),GL(U),Ad_h)$ where $B$ is a quadratic form on $V$ and 
		$h^* B = -B$. 
		\item $(SL_F(V), SL_E(V), Ad_J)$ where $F$ in non-Archimedean local field, $V$ is a vector space over $F$, $\dim(V) > 1$,  
		$J : V \to V$ is a linear operator with 
		$J^2 = a \notin F^\times$ and $E = F[J]$. 
		\item $(O(B),O(B^+) \times O(B^-) , Ad_h)$ for 
		$F=\RR$ and both $B^+,B^-$ are not definite.     
		\item $(U(B),U(B^+) \times U(B^-) , Ad_h)$ for 
		$F=\RR$ and both $B^+,B^-$ are not definite, or 
		$F$ is non-Archimedean and the pair is of rank bigger than $1$. 
		\item The pair $(\mathbf{GSp}(\omega)(E), \mathbf{GSp}(\omega)(F),c)$ for $\omega$ a symplectic form on a vector space $V$ over $F$.
		\item The pair $(\mathbf{Sp}(\omega)(E), \mathbf{Sp}(\omega)(F),c)$ for $\omega$ a symplectic form on a vector space $V$ over $F$. 
\end{itemize}

\end{theorem}
The rest of this sub-section will be dedicated to the proof of this theorem. 

we start with the pair
$(SL(V),Z_{SL(V)}(h),Ad_h)$, for $\dim(V_1(h)) = \dim(V_{-1}(h))$. 
By Proposition \ref{proposition: maximal h-split tori GL(V)}, a maximal $(Ad_h,F)$-split torus in $SL(V)$ is of the form 
$A(U,W,T)$ for an $h$-split subspace $U$ of $V$ of maximal possible dimension and $h$-invariant complement linear $W$.. 
In this case we can find $U$ of dimension $\dim(V)/2$, so that 
$W = \{0\}$. 
Let $x_1,..,x_n$ be a basis of $U$. Then 
$x_1,...,x_n,h(x_1),...,h(x_n)$ is a basis of $V$ in which 
the torus $A = A(U,W,T)$ can be chosen to consist of diagonal matrices for a suitable choice of $T$.
More precisely, for such a choice we have  
\[A=\Big\{\begin{pmatrix}
a & 0 \\ 
0 & a^{-1}
\end{pmatrix}, a\in (F^\times)^n\Big\}
\]
and 
\[h = \begin{pmatrix}
0 & Id_n \\ 
Id_n & 0
\end{pmatrix}
\]

Moreover, $Z_{SL(V)}(A)$ is the standard maximal torus of $SL(V)$ in this basis. 
We claim that the map 
$\HH^1(Ad_h,Z_G(A)) \to \HH^1(Ad_h,SL(V))$ is the constant map, and that 
the map $\HH^1(Ad_h, A) \to \HH^1(Ad_h,Z_G(A))$ is not, so that necessarily 
$\ImH{Ad_h}{A}{Z_G(A)} \cap \KerH{Ad_h}{Z_G(A)}{G}$ is non-trivial. Consequently,  
by Theorem \ref{theorem: criterion for s-stability}, the pair is not s-stable. 

Indeed, via the identification 
$\HH^1(Ad_h,SL(V)) \cong \{t \in \{0,...,\dim(V)\} : t \equiv \dim(V)/2 \mod 2\}$, 
an element $[r] \in \HH^1(Ad_h,SL(V))$ is mapped to $\dim(V_1(rh))$. 
If $r \in Z_G(A)$ then $r$ is a diagonal matrix and therefore 
$rh$ is of the form 
\[\begin{pmatrix} 
0 & A \\ 
A^{-1} & 0 
\end{pmatrix}.\] 
Every involution of this form has $+1$ and $-1$-eigen spaces of the same dimension, and hence in this case $\dim(V_1(rh)) = \dim(V)/2$. This implies that $[r]$ is the neutral element of $\HH^1(Ad_h,SL(V))$. 

However, $\HH^1(Ad_h,A) \cong A/ A^2$ while $\HH^1(Ad_h,Z_G(A)) \cong F^\times / (F^\times)^2$, 
e.g. by considering the long exact sequence associated with the short exact sequence
\[Z_{SL(V)}(A) \to Z_{GL(V)}(A) \stackrel{\det}{\to} F^\times.\] 
More precisely, this identification is given by 
writing a representative of the cocycle $[r]$ as $r = \delta(x)$ for $x \in Z_{GL(V)}(A)$, 
and then sending it to $\det(x) \mod (F^\times)^2 $.  
The resulting map $A/ A^2 \to F^\times / (F^\times)^2$  is given by 
$x \mapsto \det(x|_U)$. Indeed, in $Z_{GL(V)}(A)$ we can write 
$x = \delta(x|_U \oplus Id_{h(U)})$ and the determinant of $x|_U \oplus Id_{h(U)}$ 
is the same as $\det(x|_U)$. Since this map is not constant, this shows that the pair $(SL(V), Z_{SL(V)}(h), Ad_h)$ is not s-stable. 

Next we consider the pair $(SL(V(E)), SL(V(F)),c)$. It is easy to see that
if $\dim(V(E))$ is even, then 
we can find a $c$-split subspace of $V(E)$ of dimension $\dim(V)/2$. 
Let $U$ be such a subspace, and consider the torus 
$A = A(U,\{0\},T)$, for some maximal $F$-split torus in $GL_E(U)$. 
By Hilbert's Theorem 90 (see Proposition \ref{proposition: hilbert 90}), $\HH^1(c,SL(V(E))) = 1$. Hence, to prove that the pair 
in not s-stable, it suffice to prove that $\ImH{c}{A}{Z_{G}(A)}$ in non-trivial. 
Note that $A \cong (F^\times)^n$ while $Z_G(A) \cong (E^\times)^n$ for 
$n = \dim(V)/2$. The action of $c$ on $Z_G(A)$ is given by 
\[c(x_1,...,x_n) = (c(x_1)^{-1},...,c(x_n)^{-1}),\] so that 
\[\HH^1(c, Z_G(A)) \cong (F^\times / N_{E/F}(E^\times))^n,\] 
while $\HH^1(c,A) \cong (F^\times / (F^\times)^2)^n$. 
The map $\HH^1(c,A) \to \HH^1(c,Z_G(A))$ is given by sending 
the class of $(x_1,...,x_n)$ modulo squares to the same class modulo norms from 
$E^\times$. This map is surjective and non-trivial for both 
$F =\RR$ and for $E/F$ a quadratic extension of local non-Archimedean fields of charcteristic $0$. 
We deduce that the pair $(SL(V(E)), SL(V(F)),c)$ is not s-stable. 

Consider now the pair $(GL(V), O(B), \theta)$. Choose a basis $\{x_1,...,x_n\}$ such that 
$B$ is diagonalized in this basis, and let $A$ be the torus of diagonal matrices in this basis. 
The torus $A$ is a maximal $(\theta,F)$-split torus, and in fact a maximal torus of $GL(V)$. 
It follows that $Z_{GL(V)}(A) = A$, and therefore to show that the pair is not s-stable, it suffice to show that 
$\KerH{\theta}{A}{GL(V)}$ in not trivial. 

We have \[\HH^1(\theta, A) = A/ A^2 \cong (F^\times / (F^\times)^2)^n,\] 
while $\HH^1(\theta,GL(V))$ is identified with the collection of quadratic forms on 
$V$ up to equivalence. More precisely, this identification is done as follows: if $\theta(r) = r^{-1}$ then 
$B(rv,u) = B(v,ru)$, and we associate with $r$ the quadratic form 
$B_r(v,u) = B(rv,u)$. 
If $B = Q([b_1,...,b_n])$ and $r = (a_1,...,a_n)$ then 
$B_r = Q([a_1b_1,...,a_nb_n])$. The problem is thus reduced to the following one. We need to determine for which quadratic forms $B = Q([b_1,...,b_n])$ there exist a non trivial element 
$(a_1,...,a_n) \in F^\times / (F^\times)^2$ such that 
$Q([a_1b_1,...,a_nb_n]) \equiv B$. 
We claim that this is possible if and only if $F = \RR$ and 
$B$ is non-definite, or $F$ is non-Archimedean and $\dim(V) > 1$. 

Indeed, if $F = \RR$ and $B$ is definite then the condition 
$Q([a_1b_1,...,a_nb_n]) \equiv B$ imply that $a_i > 0$ and hence that 
$(a_1,...,a_n) \in ((\RR^\times)^2)^n$.  
Conversely, if $B$ is not definite we may assume that 
$b_1 = 1$ and $b_2 = -1$. But then 
$(-1,1,1,...,1)$ is an example of a non-trivial element in 
$\KerH{\theta}{A}{GL(V)}$. 

If $F$ is non-Archimedean and $\dim(V) > 1$, then it suffices to find some 
$a_1,a_2$ not both squares such that $Q([a_1b_1,a_2b_2]) \equiv Q([b_1,b_2])$. 
By the classification of quadratic forms over $F$, as stated in Proposition \ref{proposition: classification of quadratic forms over local fields}, this condition is equivalent 
to $a_1 a_2 = 1 \mod(F^{\times})^2$ and 
$\{a_1b_1,a_2b_2\} = 1$. Namely, $a_1=a_2= \lambda$ and 
$\{\lambda b_1,\lambda b_2\} = \{b_1,b_2\}$. 
This equation is linear and homogenuous in $\lambda$, so a non-trivial solution 
exist because $dim_{\FF_2}(F^\times / (F^\times)^2) > 1$.  

Next consider the pair $(Sp(\omega), GL(U), Ad_h)$. 
It is easy to see that the set
$\HH^1(Ad_h,Sp(\omega))$ classifies pairs of transversal Lagrangian 
subspaces of $V$, and since every such pair is conjugate to any other pair, 
we have $\HH^1(Ad_h,Sp(\omega)) = 1$. However, we can choose a basis 
$\{x_1,...,x_n\}$ of $U = V_1(h)$ and a dual basis $\{y_1,...,y_n\}$ of 
$U' = V_{-1}(h)$. In the basis 
\[\{x_1 + y_1,...,x_n + y_n, x_1 - y_1,...,x_n - y_n\}\] the 
torus $A$ represented by matrices of the form 
$diag(\lambda_1,...,\lambda_n,\lambda_1^{-1},...,\lambda_n^{-1})$ 
is a maximal $(\theta,F)$-split torus. Note that this is a maximal torus of $Sp(\omega)$, so $Z_{Sp(\omega)}(A) = A$. 
Moreover, 
\[\KerH{A}{G}{\theta} = \HH^1(A) = \HH^1(Z_G(A)) = A/ A^2 \ne 1\] in this case. 
It follows that the pair is not s-stable. 
Exactly the same argument works also for the pair 
$(O(B),GL(U),Ad_h)$ where $h: V \to V$ is an involution with 
$h^*B = -B$ (and in particular $B$ must be hyperbolic as it contains 
an isotropic subspace $V_1(h)$ of half the dimension of $V$).       

Consider now the pair $(SL_F(V), SL_E(V), Ad_J)$.  
Recall that here $J^2=aId_V$,and choose a basis $\{x_1,...,x_n,y_1,...,y_n\}$ such that
\[J x_i = y_i, \quad J y_i = a x_i.\]
Then
\[A = \{g \in SL_F(V) : gx_i = \lambda_i x_i, \quad gy_i = \lambda_i^{-1} y_i, \quad \lambda_i \in F^\times\}\]
is a maximal $(Ad_J,F)$-split torus in $SL_F(V)$.
Let $T$ be the torus of all diagonal matrices in our basis.
Then $Z_{SL_F(V)}(A) = T$. To compute $\HH^1(Ad_J,T)$, first note that
since $A = T^-$ the map $\HH^1(Ad_J,A) \to \HH^1(Ad_J,T)$ induced by the inclusion is onto.
Thus, the pair is not s-stable if the map
$\HH^1(Ad_J,T) \to \HH^1(Ad_J,SL_F(V)) \cong F^\times/N_{E/F}(E^\times)$ has non-trivial kernel.
In order to compute $\HH^1(Ad_J,T)$, let $\tilde{T} = Z_{GL_F(V)}(A)$.
It is easy to see that
\[\HH^1(Ad_J,\tilde{T}) = 1.\]
We have an exact sequence of abelian $\ZZ/ 2\ZZ$ modules:
\[\SES{T}{\tilde{T}}{F^\times}\]
and as a result we get a (prtion of) a long exact sequence
\[\begin{CD} \det(\tilde{T}^+) @>>> F^\times @>>> \HH^1(Ad_J,T) @>>> 1  \end{CD}.\]
But $\det(\tilde{T}^+) = (F^\times)^2$ so
\[\HH^1(Ad_J,T) \cong F^\times / (F^\times)^2.\]
The induced map
\[F^\times / (F^\times)^2 \cong \HH^1(Ad_J,T) \to \HH^1(Ad_J,SL_E(V)) \cong F^\times / N_{E/F}(E^\times)\]
is the natural quotient map, which has non-trivial kernel 
if $F$ is non-Archimedean. 

We now consider the pair $(O(B),O(B^+) \times O(B^-) , Ad_h)$, 
for $F = \RR$ and $B^+,B^-$ both non-definite.   
We can choose a $(B,h)$-split subspace 
$U$ of $V$ such that the form 
$B_h(x,y) = B(x,h(y))$ on $U \oplus h(U)$ is non-definite as follows. Represent the quadratic form $B^+$ in a basis 
$\{x_1,...,x_n\}$ in which  
$B^+ = Q([-1,1,...])$. 
Consider the subspace $U = \Span(x_1 + h(x_2))$. 
It is a $(B,h)$-split one dimensional subspace, 
and $B_h|_U$ is represented by the matrix 
\[
\begin{pmatrix}
B(x_1 + h(x_2), h(x_1) + x_2) & B(h(x_1) + x_2, h(x_1) + x_2) \\ 
B(h(x_1) + x_2, h(x_1) + x_2) & B(h(x_1) + x_2, x_1 + h(x_2)) 
\end{pmatrix}
= 
\begin{pmatrix}
0 & 2 \\ 
2 & 0 
\end{pmatrix}
\]
Which is hyperbolic. 

Let $T = GL(U)$ and $A = A(U,T)$ be the corresponding $(\theta,F)$-split torus. 
Then $A$ is not necessarily maximal $(\theta,F)$-split, 
but if $A \subseteq \tilde{A}$ is an extension to a maximal 
$(\theta,F)$-split torus, the sequence of inclusions 
\[A \subseteq \tilde{A} \subseteq Z_G(\tilde{A}) \subseteq Z_G(A)
\subseteq G\] 
shows that it suffice to prove that 
$\ImH{\theta}{A}{Z_G(A)} \cap \KerH{\theta}{Z_G(A)}{G} \ne 1.$ 
Moreover, for the subspace $W = (U + h(U))^{\bot}$ we have 
$Z_G(A) \cong A \times O(B|_W)$ as a $\ZZ/2$-groups. 
The inclusion 
$Z_G(A) \to O(B)$ factor through $O(B|_{U + h(U)}) \times O(B|_W)$, so it suffices to show that 
\[\ImH{\theta}{A}{Z_G(A)} \cap O(B|_{U + h(U)})) \cap \KerH{\theta}{Z_G(A) \cap O(B|_{U + h(U)})}{O(B|_{U + h(U)})} \ne 1.\] 

Thus, the problem is reduced to the case 
\[
B = \begin{pmatrix}  
0 & 1 \\ 
1 & 0 
\end{pmatrix}, 
h = \begin{pmatrix}  
0 & 1 \\ 
1 & 0 
\end{pmatrix}, 
A =  
\Big\{\begin{pmatrix}  
\lambda & 0 \\ 
0 & \lambda^{-1} 
\end{pmatrix} : \lambda \in F^\times\Big\} 
 \]
 Take the element 
 $x = -Id_V \in A$. 
 By Proposition \ref{proposition: cohomology of conjugacy}, the class of $x$ in 
 $\HH^1(Ad_h,O(B))$ is represented by the involution 
 $xh = -h$ . 
 But
 \[\begin{pmatrix}
 -1 & 0 \\ 
 0 & 1 \\ 
 \end{pmatrix}(-h) \begin{pmatrix}
 -1 & 0 \\ 
 0 & 1 \\ 
 \end{pmatrix} =h\] 
  and hence  
 $x$ represent a non-trivial element in 
 $\ImH{Ad_h}{A}{Z_{O(B)}(A)} \cap \KerH{Ad_h}{Z_{O(B)}(A)}{ O(B)}$. 
 
 The case of the pair $(U(B),U(B^+) \times U(B^-) , Ad_h)$ for 
 $F = \RR$ is similar and we leave the details for this case for the 
 reader. 
 
 We are left with the pair  
 $(U(B),U(B^+) \times U(B^-) , Ad_h)$ for non-Archimedean $F$. 
 We have to prove that if the dimension of a maximal $(\theta,F)$-split 
 torus is bigger than $1$, then the pair is not s-stable. 
 We can choose a 2-dimensional 
 $(\theta,F)$-split torus of the form $A(U,T)$ for a 2-dimensional 
 $(B,h)$-split subspace $W$ of $V$, and as in the previous case 
 we can reduce the problem to 
 $U(B|_{W + h(W)})$. Thus, we can reduce to the case 
 \[B = 
 \begin{pmatrix}
 0 & 0 & 1 & 0 \\ 
 0 & 0 & 0 & 1 \\ 
 1 & 0 & 0 & 0 \\ 
 0 & 1 & 0 & 0 
 \end{pmatrix}, 
 h = 
  \begin{pmatrix}
 0 & 0 & 1 & 0 \\ 
 0 & 0 & 0 & 1 \\ 
 1 & 0 & 0 & 0 \\ 
 0 & 1 & 0 & 0 
 \end{pmatrix}, 
 A = 
   \Bigg\{\begin{pmatrix}
 \lambda_1 & 0 & 0 & 0 \\ 
 0 & \lambda_2 & 0 & 0 \\ 
 0 & 0 & \lambda_1^{-1} & 0 \\ 
 0 & 0 & 0 & \lambda_2^{-1} 
 \end{pmatrix}, 
 : \lambda_{1,2} \in F^\times
 \Bigg\}.\]  

In this case, 
$Z_{U(B)}(A)$ consist of matrices of the form 
\[   \bigg\{\begin{pmatrix}
\lambda_1 & 0 & 0 & 0 \\ 
0 & \lambda_2 & 0 & 0 \\ 
0 & 0 & c(\lambda_1)^{-1} & 0 \\ 
0 & 0 & 0 & c(\lambda_2)^{-1} 
\end{pmatrix}, \lambda_i \in E^\times\Bigg\}\]
. 

One computes that 
\[\HH^1(Ad_h,A) \cong F^\times / (F^\times)^2\] 
\[\HH^1(Ad_h,Z_G(A)) \cong (F^\times / N_{E/F}(E^\times))^2\]
and 
\[ 
\HH^1(Ad_h,U(B)) \subseteq Her_2(E/F) \cong 
F^\times / N_{E/F}(E^\times).\] Moreover, it is easy to see that the map $\HH^1(Ad_h,A) \to \HH^1(Ad_h,Z_{U(B)}(A))$ correspond to reduction mod $N_{E/F}(E^\times)$ in each coordinate and 
the map 
$\HH^1(Ad_h,Z_{U(B)}(A)) \to \HH^1(Ad_h,U(B))$ correspond to 
the product map 
$(F^\times / N_{E/F}(E^\times))^2 \to (F^\times / N_{E/F}(E^\times))$. 
It follows that every element of 
the form $diag(x,x,x^{-1},x^{-1})$ where 
$x$ is not a norm from $E$ represents a non-trivial class 
in $\ImH{Ad_h}{A}{Z_{U(B)}(A)} \cap \KerH{Ad_h}{Z_{U(B)}(A)}{U(B)}$. 
 
We shall treat now the pairs  
$(\mathbf{GSp}(\omega)(E), \mathbf{GSp}(\omega)(F),c)$ and 
$(\mathbf{Sp}(\omega)(E),\mathbf{Sp}(\omega)(F),c)$. 
First, note that by the uniqueness of a symplectic form over an $F$-vector space these pairs have trivial cohomology. Thus, to show that these pairs are not s-stable, it would suffice to show that $\ImH{c}{A}{Z_G(A)} \ne \{1\}$ for a maximal $c$-split torus  $A$. Choose a $(c,\omega)$-split subspace $L \subseteq V$ and a maximal torus $T \subset GL_E(L)$. Let 
$A = A(T,L)$ be the corresponding $(c,F)$-split torus 
of $GSp(\omega)$ and $A'$ its intersection with $Sp(\omega)$.
Then $A' \cong (E^\times)^n$ with the action of $c$ given by  
$c(a_1,...,a_n) = (c(a_1)^{-1},...,c(a_n)^{-1})$, 
and $Z_{Sp(\omega)(E)}(A') = A'$. Since 
\[\HH^1(c,A) \cong (F^\times / N_{E/F}(E^\times))^n \ne \{1\},\] we get that   
$(Sp(\omega)(E),Sp(\omega)(F),c)$ is not s-stable. 

The case of $(\textbf{GSp}(\omega)(E),\textbf{GSp}(\omega)(F), c)$ for $\dim(V)>2$ follows from the previous one and the exact sequence 
$A' \to A \stackrel{a_1b_1}{\to} E^\times$. The associated long exact seuqnece identifies 
$\HH^1(c,A)$ with the quotient of $\HH^1(c,A')$ by the image of 
$(E^\times)^c / im(A^c) = F^\times / N_{E/F}(E^\times)$. 
The boundary map is not surjective if $\dim(V) > 2$ since 
it embeds $F^\times / N_{E/F}(E^\times)$ diagonally in 
$\dim(V)/2$ copies of it. 
This completes the proof of Theorem \ref{theorem: some non-s-stable pairs} 

Note that for some of the pairs in the list, even if there are non-trivial conditions for its s-stability, this gives a complete classification of stable pairs among them. This is because Riemannian pairs are stable, by Theorem \ref{theorem: riemannian pairs}) and pairs with $G$ commutative are certainly stable. 

Specifically, by Theorem \ref{theorem: some non-s-stable pairs} it follows that the pairs
$(GL(V), O(B), \theta)$ and  $(GL(V(E)), U(B), \theta)$ 
are either non-s-stale, or they are Riemannian, or $G$ is commutative.

\subsection{Some Stable and p-stable Pairs}
\label{subsection: some stable and p-stable pairs}
In this section we finish to prove the results presented in table 
\ref{table:stable pairs} by showing that the conditions for 
s-stability, stability and p-stability presented there holds. 

Theorem \ref{theorem: some non-s-stable pairs} provide useful restrictions on the pairs we should consider: For all the pairs appearing in this theorem all the 3 stability conditions that we consider here are false. 
It remains only to consider the cases which  
are not falsified by Theorem \ref{theorem: some non-s-stable pairs}. 
We shall treat them one by one. 
The main tools we use are the cohomological criterion for stability 
in Proposition \ref{proposition: the centralizer criterion}, 
and for p-stability in Proposition \ref{theorem: criterion for p-stability}
We start with $(SL(V),(GL(V_1(h)) \times GL(V_{-1}(h))) \cap SL(V), Ad_h)$. 

By Theorem \ref{theorem: some non-s-stable pairs} 
it suffice to treat the case where
 $\dim(V_1(h)) \ne \dim(V_{-1}(h))$. 

We shall prove: 

\begin{theorem}
Let $V$ be a finite dimensional vector space over a local field $F$ of characteristic $0$. If $\dim(V_1(h)) \ne \dim(V_{-1}(h))$, then the pair 
\[(SL(V),(GL(V_1(h)) \times GL(V_{-1}(h))) \cap SL(V), Ad_h)\] 
is stable. 	  
\end{theorem} 

This concludes the verification for this pair, as 
stability implies s-stablitiy and p-stability. 

\begin{proof}
The main point in the proof is that if 
$\dim(V^+) \ne \dim(V^-)$, then for every 
symmetric element $r$ in $G$ at least one of the spaces 
$V_1(r)$ or $V_{-1}(r)$ in non-zero. 
Indeed, every primary value of type $B$ and every pair $(\lambda,\lambda^{-1})$ of primary values of 
type $A$ of $r$ contribute the same dimension to $V^+$ and $V^-$, so only the primary values of 
type $C$ can contribute to $\dim(V^+) - \dim(V^-)$.  

First, note that since the pair $(GL(V),GL(V^+) \times GL(V^-),Ad_h)$ is stable 
(\cite[Corollary 7.7.4]{ AG09}), if $r$ is a symmetrization in $SL(V)$ then
\[[r] \in \KerH{Ad_h}{Z_{SL(V)}(r)}{ SL(V)}\] and also 
\[[r] \in \KerH{Ad_h}{Z_{SL(V)}(r)}{ Z_{GL(V)}(r)}.\] 

Since $r$ has a primary value of type $C$, we shall assume  
that $V_1(r)$ is non-zero, the case $V_{-1}(r) \ne \{0\}$ being similar. At least one of 
$V_1(r) \cap V_1(h)$ or $V_1(r) \cap V_{-1}(h)$ is non-zero, and again we shall assume that
$V_1(r) \cap V_1(h)$ is non-zero, leaving the other case to the reader.

Consider the exact sequence 
\[1 \to Z_{SL(V)}(r) \to Z_{GL(V)}(r) \to \det(Z_{GL(V)}(r)) \to 1,\] 
from which we deduce that 
\[\KerH{Ad_h}{Z_{SL(V)}(r)}{Z_{GL(V)}(r)} \cong 
\det(Z_{GL(V)}(r)) / \det(Z_{GL(V)}(\{r,h\})).\]   

But since $Z_{GL(V)}(\{r,h\})$ contains $GL(V_1(r) \cap V_1(h))$ as a subgroup, 
and since the restriction of the character $\det$ to this subgroup agree with the 
usual determinant character on it, we see that $\det(Z_{GL(V)}(\{r,h\})) = F^\times$ so that 
$\KerH{Ad_h}{Z_{SL(V)}(r)}{Z_{GL(V)}(r)} = 1$ and $[r] \in \HH^1(Ad_h,Z_{SL(V)}(r))$ vanishes. It follows that $r$ is stable, and we deduce that the pair is stable. 
\end{proof}

We turn to the pair $(SL(V(E)), SL(V(F)), c)$. 

\begin{theorem}
	If $\dim(V)$ is odd, then $(SL(V(E)), SL(V(F)), c)$ is a stable symmetric pair. 
\end{theorem}

\begin{proof}
	It will suffice to prove that if 
	$\dim(V)$ is odd, then all the centralizers 
	of all the symmetric elements in $SL(V(E))$ are acyclic to $c$ (namely, they have vanishing first cohomology with respect ot $c$).  
	
	Let $r$ be a semi-simple symmetrization in 
	$SL(V(E))$. Let $R = E[r] \cong E[x]/m_r(x)$ for $m_r(x)$ the minimal polynomial of $r$. 
	Since $r$ is semi-simple, $R$ is a product of fields. We can extend the involution 
	$c$ to $R$ by $c(r) = r^{-1}$.   
	The space $V(E)$ admits a natural structure of an $R$-module, which is compatible with the action of 
	$c$. 
	
	We have $Z_{SL(V(E))}(r) \cong SL_R(V(E))$. By Proposition \ref{proposition: vanishing theorem for centralizers}, 
	$\HH^1(c,Z_{GL(V(E))}(r))=1$. Similarly to the previously considered pair, this implies that  
	\begin{align*}
	&\HH^1(c, Z_{SL(V(E))}(r)) \cong \det(Z_{GL(V(E))}(r))^{c} / \det(Z_{GL(V(F))}(r))= \\ 
	&(\det(Z_{GL(V(E))}(r))) \cap F / \det(Z_{GL(V(F))}(r)).
	\end{align*} 
	However, if $K/L$ is a finite extension of fileds then $\det(A|_L) = N_{K/L}(\det(A))$. 
	Moreover, 
	\[\det(Z_{GL(V(E))}(r)) = N_{R/E}(R^\times)\] and \[\det(Z_{GL(V(F))}(r)) = N_{R^c/F}((R^c)^\times).\] 
	It follows that 
	\[\HH^1(c,Z_{SL(V(E))}(r)) \cong (N_{R/E}(R^\times) \cap F^\times) / N_{R^c/F}((R^c)^\times)
	\cong \HH^1(c,R^\times).\] 
	
	Note that the last term is a cohomology of $\ZZ/2\ZZ$ with abelian coefficients, hence 
	it is an abelian group of exponent 2. On the other hand, since $\dim(V)$ is odd, 
	$R$ contains a direct summand which is a field extension of $E$ of odd degree. 
	Since every $d$-th power in $F^\times$ is a norm from $K$, and hence from $R^c$, 
	this means that $\HH^1(c,Z_{SL(V(E))})$ is of odd exponent. Combining these two facts we deduce that 
	$\HH^1(c,SL(V(E)))$ is trivial, so the pair is stable in this case. 
\end{proof}

Next we consider the pairs $(GL_F(V), GL_E(V), Ad_J)$ and $(SL_F(V), SL_E(V), Ad_J)$. 
Note that we already proved in Theorem \ref{theorem: some non-s-stable pairs} that the second 
pair is not s-stable if $F$ is non-Archimedean, unless $\dim(V) = 1$, in which it is stable since it is 
commutative. So it remains to check the case where $E = \RR$ for this pair. 

\begin{theorem}
	The pair $(GL_F(V), GL_E(V), Ad_J)$ is always stable.  
	The pair $(SL_F(V), SL_E(V), Ad_J)$ is stable if $F = \RR$. 
\end{theorem} 

\begin{proof}
	For the pair $(GL_F(V), GL_E(V), Ad_J)$, note that  
	the descendants of this pair are all products of pairs of the form: 
\begin{itemize} 
	\item $(G \times G, \Delta G, (x,y) \mapsto (y,x))$, one for each primary value of type $A$.
	\item $(GL(V(E_\lambda)), GL(V(F_\lambda)), c)$, one for each primary value of type $B$.
	\item $(GL_F(V), GL_E(V), Ad_J)$, one for each primary value of type $C$.
\end{itemize}

Since the cohomology of each of these pairs vanish, the pair is stable. 

We shall consider now the pair $(SL_\RR(V),SL_\CC(V), Ad_J)$. 
Using the triviality of $\HH^1(Ad_J,GL_\RR(V))$ and the sequence 
\[1 \to SL_\RR(V) \to GL_RR(V) \to \RR^\times \to 1\] 
we deduce that 
\[\HH^1(Ad_J,SL_\RR(V)) \cong \RR^\times / N_{\CC/\RR}(\CC^\times) \cong \RR^\times / (\RR^\times)^2\] 

A similar argument shows that, if $r$ is a semi-simple symmetric element then 
\[\HH^1(Ad_J, Z_{SL_F(V)}(r)) \cong \det(Z_{GL_\RR(V)}(r)) / \det(Z_{GL_\CC(V)}(r)).\] 
However, since $\det(Z_{GL_\CC(V)}(r)) = (\RR^\times)^2$ the map  
$\det(Z_{GL_\RR(V)}(r)) / \det(Z_{GL_\CC(V)}(r)) \to \RR^\times / (\RR^\times)^2$ is injective, 
so the obstruction to stability of $r$ is trivial and the pair is stable.         
\end{proof}

Finally, we consider the pairs of the form
$(O_{p,q}(\RR), O_{p_1,q_1}(\RR) \times O_{p_2,q_2}(\RR), Ad_h)$ 
and of the form $(U(B),U(B^+) \times U(B^-),Ad_h)$. 
For the second pair, we will see that the conditions for p-stability and s-stability are different, hence 
stability and p-stability does not agree in general. 

Since we treated the cases where one of the forms $B^+$ or $B^-$ are definite for both pairs in the 
Archimedean case and the case where $F$ is non-Archimedean and the rank of the pair is 1 for the second pair, we shall not consider these cases. 

\begin{theorem}
	The pairs $(O_{p,q}(\RR), O_{p_1,q_1}(\RR) \times O_{p_2,q_2}(\RR), Ad_h)$ and 
	$(U_{p,q}(\RR), U_{p_1,q_1}(\RR) \times U_{p_2,q_2}(\RR))$ 
	are stable if one of $p_1,q_1,p_2$ or $q_2$ vanish.   
	
	In the non-Archimedean case, the pair $(U(B), U(B^+) \times U(B^-), Ad_h)$ is p-stable only if 
	$\rank(B^+)=1$ or $\rank(B^-)=1$.  
\end{theorem}

\begin{remark}
	In the non-Archimedean case, this theorem concludes the classification of stable, p-stable and s-stable 
	pairs among the pairs of the form $(U(B), U(B^+) \times U(B^-), Ad_h)$. 
	Indeed, in the case where the rank of one of the forms $B^-$ or $B^+$ is 1, the pair is 
	known to be stable, hence also p-stable and s-stable (see e.g. 
	\cite[\S 5]{AGRS10} and \cite[\S 7]{Ser97}.)   
\end{remark}

\begin{proof}
	For the Archimedean case, we shall prove this for the first pair, the second one being similar. 
	Assume, without loss of generality, that $q_1 = 0$, so that $B^+$ is positive definite. 
	Let $r$ be a semi-simple symmetrization. We can write $V$ as a sum of primary subspaces of $r$. 
	
	Let $\lambda$ be a non-real primary value of $r$ of type $A$, and let $U_\lambda V_\lambda(r) + V_{\lambda^{-1}}(r)$. The restriction 
	$B|_{U_\lambda \cap V_1(rh)}$ is not definite, so in particular $B|_{V_1(rh)}$ in not definite and $r$ is not a symmetrization in $O(B)$, as the cohomology class of $r$ in $\HH^1(Ad_h,O(B))$ is determined by 
	the class of the form $B|_{V_1(rh)}$, which must be definite in order to be trivial. 
	
	Thus, primary values of type $A$ for $r$ must be real. Let $\lambda$ be such a real primary value.
	Let $v \in V_\lambda$, so that $h(v)$ is in $V_{\lambda^{-1}}$. Scaling properly we may assume that 
	$B(v,h(v)) \in \pm 1$. The option $-1$ is excluded because then $v + h(v)$ is an element of 
	$V_1(h)$ with $B(v + h(v)) < 0$, contrary to the assumption $q_1 = 0$. 
	It follows that $B(v,h(v)) = 1$.  It follows that the vector $\lambda v +  h(v)$ is in $V_1(rh)$ and 
	$B(\lambda v +  h(v)) = 2 \lambda$. So for $r$ to be a symmetrization we must have 
	$\lambda > 0$. 
	
	If $\lambda \in \mathcal{B}(r)$, then it is easy to see that $r|_{V_\lambda}$ is a square of an 
	element of $Z_{O(B|_{V_\lambda(r)})}(r) \cap G^\sigma$ so that it represents the trivial cocycle in 
	$Z_{O(B|_{V_\lambda(r)})}(r)$ and in particular also in $O(B|_{V_\lambda(r)})$, so that 
	$B|_{V_1(h) \cap V_\lambda(r)}$ and $B|_{V_1(rh) \cap V_\lambda(r)}$ are equivalent. 
	By the consideration above, the same hold for primary values of type $A$, because a possitive number have square root.  
	Finally, this clearly holds for the primary value $1$, so writing 
	$V = V_{-1}(r) \oplus V_{-1}(r)^\bot$, we see that 
	\[B|_{V_1(rh) \cap V_{-1}(r)^\bot} \equiv B|_{V_1(h) \cap V_{-1}(r)^\bot},\] 
	and the class $[r] \in \HH^1(Ad_h,Z_G(r) \cap O(V|_{-1}(r)^\bot))$ is trivial. 
	
	By Witt Cancellation Theorem, and using the fact that $B|_{V_1(h)} \equiv B|_{V_1(rh)}$, we 
	deduce that $B|_{V_1(rh) \cap V_{-1}(r)} \equiv B|_{V_1(h) \cap V_{-1}(r)}$ 
	so $[r]\in \HH^1(Ad_h,Z_{O(B|_{V_{-1}}(r))})$ is trivial. Finally, since 
	\[Z_{O(B)}(r) \cong Z_{O(B|_{V_{-1}(r)^\bot})(r|_{V_{-1}(r)^\bot})} \times O(B|_{V_{-1}(r)}),\] 
	we deduce that $[r] \in \HH^1(\theta,Z_{O(B)}(r))$ is trivial and the pair is stable. 
	
	We turn to the non-Archimedean case for the pair $(U(B), U(B^+) \times U(B^-)$. 
	We wish to show that the pair is p-stable only if one of $V^+$ or $V^-$ is 
	one dimensional. 
	By Theorem \ref{theorem: criterion for p-stability}, it is enough to chack that 
	$\KerH{\theta}{Z_G(A)}{G}$ is trivial for a  maximal $(\theta,F)$-split torus $A$. 
	
	Assume that $\dim(V^+) \ge 2$ and $\dim(V^-) \ge 2$. Assume without loss of generality that $\dim(V^-) \ge \dim(V^+)$. Since both $B^+$ and $B^-$ are of rank at least $2$, they represent all of $F^\times$, so by Proposition  \ref{proposition: classification of Hermitian forms} 
	we can find $v^+ \in V^+$ and $v^- \in V^-$ such that $B^+(v^+) = -B^-(v^-) = 1$. 
	The subspace $W' := \Span\{v^+ + v^-\}$ is then $(B,h)$-split, and hence the pair is of rank at least one.
	
	Let $A = A(W,B,T)$ be a maximal $(Ad_h,F)$-split torus. Then
	\[\HH^1(Ad_h,Z_{U(B)}(A)) \cong (F^\times / N_{E/F}(E^\times))^k \times \{C \in \m{H}er(E/F) : C \le B|_{(W + h(W))^\bot}\}\]
	for $k = \dim(W)$.
	If $k > 1$ then the pair is not s-stable, hence also not p-stable.
	As $B^+$ is of rank at least $2$, by Proposition \ref{proposition: classification of Hermitian forms} we have
	$B^+ \equiv H([x]) \oplus C$ for some Hermitian form $C$. As
	\[\dim((W + h(W))^\bot) = \dim(V) - 2 \ge \dim(V^+) > \rank(C),\]
	we have $C \le B|_{(W + h(W))^\bot}$. It follows that $(x,C)$ represents a non-trivial element of $ker((i_A)_*)$, and the pair is not $p$-stable. 	 
\end{proof}

\section{Applications to Representation Theory}
\label{section: applications to representation theory}

In this section we link our geometric results to the representation theory of symmetric pairs.

\subsection{Preliminaries on Unitary Parabolic Induction}
 
Recall that, for a locally compact group $L$, a character $\psi: L \to \CC^\times$ is called \bd{unramified} if $\psi$ is trivial on every compact subgroup of $L$. 
Denote by $X^*_{ur,un}(K)$ the collection of unitary unramified characters of $K$, which are of the form $k \mapsto \phi(k)^a$ for an unramified character $\phi: L\to \RR^\times_{\ge 0}$ and some $a\in i\RR$. 

Let $F$ be a local field of characteristic 0.
Let $\bf{G}$ be a connected modulo center reductive group over $\bar{F}$ defined over $F$, and let $P$ be a parabolic subgroup of $G$.  
Let $M_P$ denote a Levi factor of $P$, which is the quotient of $P$ by its unipotent radical. 
Thus, we have a roof of algebraic groups $M_P \stackrel{\pi_P}{\leftarrow} P \stackrel{i_P}{\rightarrow} G$. 

Let $\Delta_P$ denote the modular character of $P$, given by  
$\Delta_P(x) = |\det(Ad_x)|$. 
Let $\rho_P=\sqrt{\Delta_P(x)}$ denote the positive square root of $\Delta_P$. 

For $\psi \in X_{un,ur}^*(M_P)$
the \textbf{unitary parabolic induction} of $\psi$ is defined by 
\[I_P^{u}(\psi) =  \{f \in C^\infty(G,\CC) : f(pg) = \psi(p) \rho_P(p)^{-1} f(g)\}.\] 

The unitary induction is naturally a $G$-representation via the action induced from right multiplication $g.f(h) = f(hg)$. 
Moreover, we have a skew-bilinear map defined by point-wise multiplication 
\[I_P^{u}(\psi) \otimes I_P^{u}(\psi) \stackrel{f\otimes g \mapsto f\bar{g}}{\to} ind_P^G(\psi \otimes \bar{\psi} \otimes \Delta_P) \cong C^\infty(|\omega_{P \backslash G}|).\] 
When composed with the integration map 
\[C^\infty(P \backslash G,|\omega_{P \backslash G}|) \stackrel{\int_{P \backslash G}}{\to} \mathbb{C},\] we get a non-degenerate Hermitian form on 
$I^{u}_P(\psi)$, so $I^{u}_P(\psi)$ is unitary. 

The following is a straight forward consequence of Bruhat Irreducibility Theorem, first appeared in \cite{bruhat1956representations} (see Theorem \cite[Theorem 4.12]{kolk1996transverse}) in the real case. In the p-adic cases it is even simpler, and follows from a routine application of Bernstein and Zelevinski's Geometric Lemma (\cite[Lemma 2.12]{bernstein1977induced}).  
\begin{proposition}
	\label{proposition: irreducibility criterion}
	Let $G$ be a reductive, connected modulo center, algebraic group defined over $F$. Let $P$ be a parabolic subgroup of $G$. 
	Choose a maximal $F$-split torus $T$ of $P$ and a set of representatives $w_1,...,w_n$ of 
	$P\backslash G/P$ such that $w_iTw_i^{-1}=T$. 
	For $\psi \in X^*_{ur,un}(M_P)$, if $\psi|_T^{w_i}\neq \psi|_T$ for every $w_i \notin P$, then the representation $I_P^{u}(\psi)$ is irreducible. 
\end{proposition}

\subsection{p-stability and Multiplicity One for Principal Series Representations}

Let $(G,H,\theta)$ be a symmetric pair which is not p-stable. In this case, generically, one can associate a functional on the generalized principal series representation of $G$ with each open orbit of $H$ in $G/P$,where $P$ is a minimal $\theta$-split parabolic subgroup of $G$. In particular, a "generic" generalized principal series
has multiplicity more than 1. 
In fact, those results are essentially known in the p-adic case by \cite{BD08} and in the archimedean case by \cite{BvD94}, but for completeness we shall show how they results imply the following
\begin{theorem}
	\label{theorem: Gelfand then p-stable}
	Let $(G,H,\theta)$ be a symmetric pair. If $(G,H)$ is a Gelfand pair, then $(G,H,\theta)$ is p-stable.
\end{theorem}  

\begin{remark}
	In fact, we will show that "many" unitary representations have multiplicity at least the number of open $H$-orbits in a quotient of $G$ by a minimal $\theta$-split parabolic subgroup. 
\end{remark}

%The standard strategy for showing such a result is given by the following 3 steps:
%Let $P$ be a $\theta$-split parabolic subgroup. 

%\begin{itemize}
%	\item Show that for general enough unitary character $\psi$ on $M_P$, the representation 
%	$I^{u}_P(\psi)$ is irreducible. This is done by computing $End_G(I^{u}_P(\psi))$ using a generalization of Mackey theory. 
%	\item Show that, for each open orbit $O \subseteq G/P$ of $H$, there exist an $H$-invariant measure on $O$. This boils down to a computation of a modular character of the stabilizer of a point in $O$.  
%	\item Show that, under some assumptions on $P, H$ and $\psi$, the number of $H$-invariant functionals on $I_P(\psi)$ is bounded from bellow by the number of open $H$ orbits in $P$.       
%\end{itemize}

Before implementing this strategy, let us mention that the converse to Theorem 
\ref{theorem: Gelfand then p-stable} is \textbf{false}. In fact, there are symmetric pairs of compact p-adic groups which are not Gelfand pairs. 
We do not know of examples in which $F = \RR$ or with $G$ quasi-split, 
and the question whether stability and p-stability for those pairs are equivalent is still open to the extent of our knowledge. 

\begin {example}
\label{example 1}
Not every p-stable pair is a Gelfand pair. For example, let $p = 3 \mod 4$ and consider the pair $(G,H,\theta)$ where $G$ is the quaternions
of norm 1 in the quaternion algebra \[\frac{\QQ_p[i,j]}{\{i^2 = p, j^2 = -1, ij = -ji\}} := \HH[p, -1]\], and $\theta(x) = ixi^{-1}= \frac{ixi}{p}.$ This pair is clearly p-stable as there are no non-trivial parabolic subgroups in this case. However, the pair is not a Gelfand pair, 
as we shall now show.
\end{example}

\begin{proposition}
The pair $(G,H,\theta)$ in example \ref{example 1} is not a Gelfand pair for every $p \equiv 3 \mod 4$.
\end{proposition}

\begin{proof}
Note that the Gelfand property of a pair $(G,H)$ is preserved by taking homomorphic images, and by shrinking $G$ to an intermediate group $H \subseteq K \subseteq G$.

Let $G' = \bf{G}(\FF_p)$ be the reduction of $G$ mod $p$, and similarly $H' = \bf{H}(\FF_p)$. Since all the points of $G$ have integral coordinates, there is a reduction map $\pi: G \to G'$ and $\pi|_H : H \to H'$. So, it will be sufficient to prove that $(G',H')$ is not a Gelfand pair.
It will be easier, however, to factor out also the 2-element subgroup $\{\pm 1\}$, and replace $(G',H')$ by $(G'/\{\pm 1\}, H'/\{\pm1\})$. For simplicity call it again $(G',H')$. This pair of finite groups can be realized explicitly as follows.
Let \[A \subseteq G' = \{1 + ai + bk, \quad a,b \in \FF_p\}\] and \[K = G' \cap \FF_p[j]/\{\pm 1\} = \{r \in \FF_p[j] |: N_{\FF_p[j]/\FF_p}(r) = 1\}/\{\pm 1\}.\]
Then $G' = A K$ and in fact it is the semi-direct product of $K$ and $A$. We can identify $A$ with the additive group of $\FF_p[j]$, and under this identification we get $G' \cong K \rtimes \FF_p[j]$ with respect to the action \[x * y = x^2 y, \quad x \in K, y \in \FF_p[j].\]
Under this isomorphism, $H'$ corresponds to the line $\{1 + xi |: x \in \FF_p\}$.
To show that the last pair is not a Gelfand pair, shrink $G'$ to the intermediate group $L$ generated by $A$ and $j$. Then $L = A \rtimes \{1,j\}$, where \[jrj^{-1} = r^{-1}, \quad r \in A.\] It is sufficient to prove that $(L,H')$ is not a Gelfand pair.
Let $\xi: A \to \CC$ be some non-trivial character which is trivial on $H'$.
One can choose $\xi$ such that $\Ind_A^{L}(\xi)$ is irreducible. In fact, this is true for every character $\xi$ such that $\bar{\xi} \ne \xi$.

We obtain 
\begin{align*}
&\Res_{H'}^L \Ind_A^L(\xi) \cong
\Res_{H'}^A \Res^L_A \Ind_A^L(\xi) \cong \\
&\Res_{H'}^A (\xi \oplus Ad_j^*(\xi)) \cong
\Res_{H'}^A (\xi \oplus \bar{\xi}) \cong \\
&\CC \oplus \CC
\end{align*}
where the isomorphism between the first and second row is due to Mackey Theorem for finite groups.
Thus, the multiplicity of the irreducible representation $\Ind_A^L(\xi)$, when restricted to $H'$, is 2, and the pair is not a Gelfand pair.
\qed
\end{proof}

The rest of this section is devoted to the proof of Theorem \ref{theorem: Gelfand then p-stable}
Let $(G,H,\theta)$ be a symmetric pair over $F$. Let $P$ be a minimal $\theta$-split parabolic subgroup of $G$. Let $T \subseteq P$ be a $\theta$-stable maximal $F$-split torus and $A= T^-$ its symmetric part, which is a maximal $(\theta,F)$-split torus. The Levi component of $P$ can be identified with $M_P = Z_G(A)$. Identify $\mathcal{P} = G/P$ with the space of conjugates 
of $P$ in $G$.  

As a direct consequence of \cite[Proposition 13.4]{HW93}, the union of open $H$-orbits in 
$\m{P}$ is exactly the collection of minimal $\theta$-split parabolic subgroups of $G$.
By \cite[Proposition 4.11]{HW93}, if $gPg^{-1}$ is $\theta$-split then $g = g' p$ where $p \in P$ and 
$g'^{-1}\theta(g') \in N_{M_P}(T) = Z_{G}(A) \cap N_G(T)$. In particular, every open $H$-orbit has a point of the from 
$gPg^{-1}$ with $g \in Z_{G}(A) \cap N_G(T)$. 

\begin{proposition}
    \label{proposition: elements conjugating maximal split to maximal split}
    Let $(G,H,\theta)$ be a symmetric pair, let $A\subseteq P$ be a maximal $(\theta,F)$-split torus, and let $T$ be a $\theta$-stable maximal $F$-split torus containing $A$. Let $g\in G$.
	If $g^{-1} \theta(g) \in Z_G(A) \cap N_G(T)$ then 
	$gAg^{-1}$ is a maximal $(\theta,F)$-split torus of G and $gT^+g^{-1} = (gTg^{-1})^+$.  
\end{proposition}

\begin{proof}
	Clearly $gAg^{-1}$ is $F$-split. Since all maximal $(\theta,F)$-split tori are conjugate over $\bar{F}$, it will suffice to show that $gAg^{-1} \subseteq G^\sigma$. 
	This can be seen as follows. Let $a \in A$ and 
	$g$ as above, then 
	\begin{align*}
	&\theta(gag^{-1}) = \theta(g)\theta(a)\theta(g)^{-1} =g(g^{-1}\theta{g})a^{-1}(g^{-1}\theta(g))^{-1}g^{-1} =\\
	&=ga^{-1}g^{-1} = (gag^{-1})^{-1}
	\end{align*} 
	where the conjugation by $g^{-1}\theta(g)$ can be omitted because $g^{-1}\theta(g) \in Z_G(A)$.   
	
	Next, we claim that we also have $gT^+g^{-1}=(gTg^{-1})^+$. This follows from the orthogonality of $Ad_G$ with respect to some non-degenerate $\theta$-invariant form on $\mathfrak{g}$, which exists by \cite[Lemma 7.1.9.]{AG09}, as follows.We have an orthogonal decomposition 
	$\mathfrak{t} = \mathfrak{t}^+ \oplus \mathfrak{t}^-$ and 
	similarly $Ad_g(\mathfrak{t}) = Ad_g(\mathfrak{t})^+ \oplus Ad_g(\mathfrak{t})^-$ and then the condition 
	$Ad_g(\mathfrak{t}^-) = Ad_g(\mathfrak{t})^-$ imply by passing to orthogonal complements that    
	$Ad_g(\mathfrak{t}^+) = Ad_g(\mathfrak{t})^+$.
\end{proof}

With $A$ and $T$ as above, consider the spaces of unramified unitary characters $X^*_{u,ur}(A)$ and $X^*_{u,nr}(T)$. We have 
\[X^*_{u,ur}(A) \cong X^*_{u,ur}(T)^{\sigma},\]
with respect to the action of $\sigma$ induced on characters from the action on $T$.
We have inclusions \[A\subseteq T\subseteq Z_G(A)=M_P \subseteq P,\] and therefore we have a restriction map $r^{M_P}_T: X^*_{u,ur}(M_P)\to X^*_{u,ur}(T)$. 

\begin{lemma}
\label{lemma: relating characters of Levi and split torus}
	The map $r^{M_P}_A$ is injective with image those characters which are trivial on $T\cap [M_P,M_P]$. 
\end{lemma}  

\begin{proof}
	Injectivity is clear, because $T$ is a maximal $F$-split torus of $M_P$. 
	The characters in the image are trivial on $[M_P,M_P]$ because they come from characters of $M_P$. Finally, because the inclusion $T/T\cap[M_P,M_P]\to M_P/[M_P,M_P]$ is an inclusion of tori, we can extend unramified characters of $T/T\cap[M_P,M_P]$ to unramified characters of $M_P$.    
\end{proof}

Since $A \subseteq Z(M_P)$ the intersection $A \cap [M_P,M_P]$ is finite, and hence 
all the elements of $X^*_{u,ur}(A)$ are trivial on $[M_P,M_P]\cap A$. Thus, they can be regarded as characters of $M_P$ via $(res_T^{M_P})^{-1}$, and we will implicitly make this identification from now on. 
\begin{proposition}
	There exists a closed set with empty interior $Z\subseteq X^*_{u,ur}(A)$ such that for $\psi \notin Z$, the representation $I^{u}_P(\psi)$ of $G$ is irreducible.   
\end{proposition}

\begin{proof}
	Let $w_1,...,w_k$ be a set of representatives for the double-coset space $P\backslash G/P$. We can choose them such that $w_iTw_{i}^{-1} = T$. 
	By Proposition \ref{proposition: irreducibility criterion}, the repressentation $I^{u}_P(\psi)$ is irreducible if $\psi|_T^{w_i}(x):=\psi|_T(w_i^{-1}xw_i) \neq \psi|_T(x)$ for every $i$. 
	The $w_i$-s can be split into two types: 
	those for which $w_iPw_i^{-1}$ is $\theta$-split and those for which it is not. 
	
	If $w_i \notin P$ and $w_i P w_i^{-1}$ is $\theta$-split, then we can assume that $w_i$ stabilizes $A$, and that $w_iPw_i^{-1}$ correspond to a choice of positive roots in $\Phi(A,G)$ which is different from the one given by $P$. In particular, we may assume that $w_i$ acts non-trivially on $A$, since it acts non-trivially on a root system in $X^*(A)$. But then, the condition $\psi=\psi^{w_i}$ impose non-trivial linear relation on $X^*_{u,ur}(A)$ hence cut a non-trivial affine subspace of $X^*_{u,ur}(A)$, which is of empty interior. 
	The other option is that $w_iPw_i^{-1}$ is not $\theta$-split. In this case, we get that the action of $w_i$ on $T$ does not preserve the sub-torus $A$. But then for $\psi$ outside of a non-trivial Zariski closed subset of $X^*_{u,ur}(A)$, the twisted character $\psi^{w_i}$ no longer satisfies $\theta(\psi^{w_i})=(\psi^{w_i})^{-1}$ so clearly the equality $\psi=\psi^{w_i}$ can not hold.             
\end{proof}

Our next goal is to show that in fact for every $\psi \in X^*_{u,ur}$, the representation $I^{u}_P(\psi)$ admits at least as many independent invariant functionals as the number of open $H$-orbits in $G/P$. This, together with the generic irreducibility presented above, will show that the pair $(G,H)$ is not a Gelfand pair if it is not p-stable.

Recall that, for a homogenuous action of an $F$-group $K$ on an $F$-variety $X$ with a point $x\in X$, stabilizer $K_x$, and a character $\psi$ of $K_x$, by a $(K,\psi)$-equivariant distribution on $X$ we mean a $K\times K_x$-invariant distribution on $K$. In other words, a $(K,\psi)$-equivariant distribution on $X$ is a $K$-invariant functional on the small induction $ind_{K_x}^K(\psi)$.   

\begin{lemma}
\label{lemma: orbits unimodular}
	Let $(G,H,\theta)$ be a symmetric pair. Let $\psi \in X^*_{ur}(A)$ (not necessarily unitary). Let $P$ be a minimal $(\theta,F)$-split parabolic subgroup of $G$. For every open orbit $O\subseteq G/P$, there is an $(H,\psi)$-equivariant distribution on $O$.     
\end{lemma}

\begin{proof}
	By Frobenious reciprocity (see e.g. \cite[Theorem 2.5.7]{AG09}), 
	we have to verify that $(\psi \otimes \Delta_H)|_{H_x} \otimes \Delta_{H_x}^{-1}$ is trivial for some (and hence all) $x\in O$ . Choose the point $x$ to represent a parabolic subgroup $P_x$ containing $A$. Then $\psi|_{H_x}$ is trivial because $\theta(\psi)=\psi^{-1}$. Furthermore, $\Delta_H$ is trivial because $H$ is reductive, and $\Delta_{H_x}$ is trivial because $H_x=(M_{P_x})^\theta$ is reductive as well. To conclude, all the characters $\psi|_{H_x}, \Delta_H|_{H_x}, \Delta_{H_x}^{-1}$ are trivial and hence also their product.   
\end{proof}

Theorem \ref{theorem: Gelfand then p-stable} now follows from the following proposition.

\begin{proposition}
    Let $(G,H,\theta)$ be a symmetric pair, $P$ be a minimal $\theta$-split parabolic subgroup of $G$ and $A$ a maximal $(\theta,F)$-split torus of $G$ cintained in $P$. There exist an open dense  subset $V\subseteq X^*_{u,ur}(A)$ such that for every $\psi \in V$, the representation $I_P^{u}(\psi)$ is irreducible and admits at least as many $H$-invariant functionals as the number of open $H$-orbits in $P \backslash G$.      
\end{proposition}

\begin{proof}
    By Proposition \ref{proposition: irreducibility criterion} there is an open dense $V'\subseteq X^*_{u,ur}(A)$ such that the representation $I_P^{u}(\psi)$ is irreducible for $\psi \in V$. By \cite[Theorem 2.7]{BD08} in the non-Archimedean case and \cite[Theorem 5.1]{vdB88} in the Archimedean case, there is an open dense subset $V''\subseteq X^*_{u,ur}(A)$ such that for every $\psi \in V''$ and for every open $H$-orbit $O$ in $G/P$, every $(H,\psi \otimes \rho^{-1})$-invariant distribution on $O$ extend to an $H$-invariant functional on $I_P^{u}(\psi)$. Moreover, we can choose such extensions to vanish on other open orbits, and hence in particular we can choose them linearly independent. Taking now $V=V'\cap V''$ and observing that it is open and dense in $X^*_{u,ur}(A)$, we get the result.    
\end{proof}

\subsection{Stability and the Gelfand Property}

Throughout this section we assume that $F \ne \CC$. In the case $F = \CC$ stability holds for every connected pair so it is irrelevant for the verification of the Gelfand property.

As we already mentioned, for many pairs the stability of the pair is known to imply its Gelfand property.
A list of such pairs can be found in \cite{AG10, Aiz13}. Since we proved stability for many pairs, we reveal as a result several new Gelfand pairs.

First, let us present without proofs the main factors in the method used to show for a given pair
that stability $\Rightarrow$ Gelfand property.

Let $X$ be the $F$-points of an affine algebraic variety over $F$. Let $S^*(X)$ denote the space
of Schwartz distributions on $X$, as in \cite{AG09}. If $G$ acts on $X$, we denote by $X/G$ the set $\textbf{X}(F)/\textbf{G}(F)$.
Recall that $\mathfrak{g}^\sigma$ is the space of symmetric elements in the Lie algebra $\mathfrak{g}$.
\begin{definition}
	Let $(G,H,\theta)$ be a symmetric pair. An element $g \in G$ is called \textbf{admissible} if $Ad_g$ commutes with $\theta$ and
	$Ad_g|_{\mathfrak{g}^\sigma}$ stabilizes all the closed $H$-orbits in $\mathfrak{g}^\sigma$.
\end{definition}

More generally, if a linear algebraic group $L$ acts on an affine variety $X$ and $\tau: X \to X$ is an involution, we say that 
$\tau$ is $L$\textbf{-admissible} if $\tau^2 \in Im(L \to Aut(X))$, $\tau$ stabilizes the image of $L$ in $Aut(X)$ and $\tau$ preserve all closed $L$-orbits in $X$. 

Let $\textbf{L}$ be a reductive group defined over $F$ and let $(\pi,V)$ be a finite dimensional algebraic representation of $L = \textbf{L}(F)$.
We denote by $Q(V)$ the direct complement of the trivial component $V^L$ of the representation $V$, and by $\Gamma(V) \subseteq Q(V)$ the set of nilpotent elements, namely those elements $v \in V$ such that $0 \in \overline{\pi(G)v}$. Finally, we denote $R(V) = Q(V) - \Gamma(V)$.

\begin{definition}[{\cite[Definition 7.4.2]{AG09}}] A symmetric pair $(G,H,\theta)$ is called \textbf{regular} if for every admissible $g \in G$ for which
	\[S^*(R(\mathfrak{g}^\sigma))^{H \times H} \subseteq S^*(R(\mathfrak{g}^\sigma))^{Ad_g}\] we also have
	\[S^*(Q(\mathfrak{g}^\sigma))^{H \times H} \subseteq S^*(Q(\mathfrak{g}^\sigma))^{Ad_g}\]
\end{definition}

\begin{definition}[{\cite[Definition 8.1.2]{AG09}}]
	A pair $(G,H)$ is \textbf{GP2} if for every irreducible admissible representation $\pi$ of $G$,the following inequality holds.
	\[\dim(Hom_H(\pi,\CC)) \cdot \dim(Hom_H(\tilde{\pi},\CC)) \le 1.\]
\end{definition}

In simple terms, this means that either both $\pi$ and its contragredient representation admits a one dimensional space of $H$-invariant continuous functionals, or that one of them has no non-zero $H$-invariant functional at all. 

The following result shows that stability and regularity of the descendants suffices for checking the GP2 property. With slightly more we can actually upgrade it to the Gelfand property, based on the notion of tame element. 

\begin{theorem}[{\cite[Theorem 7.4.5]{AG09}}]
	\label{theorem: Dima Rami's theorem}
	Let $(G, H, \theta)$ be a stable symmetric pairs such that all its descendants are regular. Then $(G,H,\theta)$ is GP2.
\end{theorem}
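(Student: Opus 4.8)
The statement is quoted from \cite[Theorem 7.4.5]{dima}, so the plan is to recall the architecture of its proof rather than to reprove it from scratch; the ingredients genuinely supplied by the present paper are only the verifications (carried out in Section 6) that the pairs in question are stable and have regular descendants.

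The starting point is the Gelfand--Kazhdan method in the form adapted to GP2: it suffices to prove that every $H\times H$-invariant Schwartz distribution on $G$ is invariant under the anti-involution $\sigma(g)=\theta(g)^{-1}$, i.e. that $S^*(G)^{H\times H}\subseteq S^*(G)^{\sigma}$. Since $\sigma$ normalizes $H$, it acts both on the set of closed double cosets and on the space of invariant distributions, and the first use of \textbf{stability} is immediate: as $(G,H,\theta)$ is stable, $\sigma$ fixes every closed double coset $HgH$, so $\sigma$ does not permute the closed strata and the question can be localized near a single closed double coset.

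Next comes Harish-Chandra descent. By Bernstein's localization principle, a hypothetical $H\times H$-invariant distribution that is not $\sigma$-invariant may be assumed supported near the preimage of a single semisimple double coset. Using Lemma \ref{injectivity of symmetrization on orbits}, the symmetrization $s:G/H\to S_0$ transports this to the problem of $Ad_H$-invariant distributions on the symmetric space $S$ supported near a semisimple $r=s(g)\in S_0$; linearizing around $r$ (exponential map plus a Luna-type slice) reduces matters to $Z_H(r)$-invariant distributions on the $(-1)$-eigenspace of $d\theta$ inside $\mathfrak{z}_{\mathfrak g}(r)$, that is, to the \textbf{descendant} $(Z_G(r),Z_H(r),\theta)$. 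The anti-involution induced on this linear model is given by $Ad_{g}$ for an \emph{admissible} element, admissibility being exactly the assertion that $Ad_{g}$ commutes with $\theta$ and stabilizes the closed $H$-orbits in $\mathfrak s$, which holds because of stability.

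On the linear model one runs the standard dichotomy, inside $Q(\mathfrak s)$, between the generic locus $R(\mathfrak s)$ and the nilpotent part $\Gamma(\mathfrak s)$: invariance under the admissible $Ad_g$ on $R(\mathfrak s)$ is obtained by descending once more to smaller descendants (induction on $\dim G$, the base cases being descendants with no further structure), and the hypothesis that \textbf{all descendants are regular} is precisely what transports this invariance from $R(\mathfrak s)$ to the whole of $Q(\mathfrak s)$; on the complementary directions $\mathfrak s^{H}$ the element $Ad_g$ acts trivially so nothing is needed there. Feeding the result back up the slice gives $\sigma$-invariance of every $H\times H$-invariant distribution on $G$, hence GP2. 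The hard part is exactly this descent — proving that a distribution violating $\sigma$-invariance must be supported on a locus controlled by the descendants and that the induced symmetry is by an admissible element, so that the regularity hypothesis applies; this is the content of \cite[\S 3--7]{dima} and is invoked here as a black box.
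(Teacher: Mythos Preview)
Your proposal is appropriate: the paper does not prove this theorem but cites it from \cite[Theorem~7.4.5]{dima}, and you correctly identify this and give a faithful outline of the Aizenbud--Gourevitch argument (Gelfand--Kazhdan criterion, localization to closed double cosets via stability, Harish-Chandra descent to descendants, and the regularity hypothesis to pass from $R(\mathfrak s)$ to $Q(\mathfrak s)$). Since the paper offers no proof of its own here, there is nothing further to compare.
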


The next step is to show that in many cases GP2 implies the Gelfand property. This is done based on a method to compare $\pi$ and its contragradient representation. The main tool is the existence of some special involutions of $G$. 

\begin{definition}
	Let $G$ be group. An anti-involution $\tau: G \to G$ is called $Ad_G$-admissible, if 
	$\tau$ preserves closed $G$-conjugacy classes in $G$. 
\end{definition}

The importance of these kind of anti-involution for us is the next result, which allows to deduce that a pair is a Gelfand pair given that it satisfies the property GP2 above.

\begin{proposition}[{\cite[Corollary 8.2.3]{AG09}}]
	Let $(G,H,\theta)$ be a symmetric pair. Assume that there is an $Ad_G$-admissible anti-involution 
	$\tau: G \to G$ such that $\tau(H) = H$. Suppose further that $(G,H)$ is GP2. Then $(G,H)$ is a Gelfand pair. 
\end{proposition} 

\begin{proposition}[{\cite[Corollary 8.2.3]{AG09}}]
	If $(G,H)$ is GP2 and there is an $Ad_G$-admissible anti involution $\tau : G \to G$ with
	$\tau(H) = H$ then the pair $(G,H)$ is a Gelfand pair.
\end{proposition}

\begin{corollary}
	\label{corollary: criterion for gelfand}
	Let $(G,H,\theta)$ be a symmetric pair. If $(G,H,\theta)$ is stable, all its descendants are regular,
	and there is an $Ad_G$-admissible anti-involution $\tau : G \to G$ with $\tau(H) = H$, then the pair
	$(G,H)$ is a Gelfand pair.
\end{corollary}

This corollary is the upshot of the method introduced in \cite{AG09} to verify the Gelfand property for symmetric pairs. 
Our goal now is to apply it to some pairs for which, essentially, the stability is the only remaining ingredient. In some cases, however, we need to slightly adapt the arguments of \cite{AG09} for the regularity and existence of anti-involution as above.  

\subsection{Verification of the Gelfand Property for Certain Pairs}

In this section we shall sketch the neccesary modifications of the method developed in \cite{AG09} to deduce the Gelfand property from the stability of a pair. Then, we use these modification to derive the main theorem of this paper (Theorem \ref{theorem: main theorem}). 

The main tool to show that a pair is regular is based on the observation that distributions on a linear space $V$ which are, together with their Fourier transform, supported on the same non-degenerate quadratic cone, must be of homogeneity degree $\dim(V)/2$. The proof of this fact is based on the Weil representation. For details, see \cite[\S 5]{AG09}. 

\begin{definition}
	Let $(G,H,\theta)$ be a symmetric pair. $(G,H,\theta)$ is called \textbf{very special} if, for every nilpotent 
	element $e \in \mathfrak{g}^\sigma$, and any completion of it to a graded $\mathfrak{sl}_2$-triple 
	$(f,h,e)$ in $\mathfrak{g}$, we have $tr(h|_{(\mathfrak{g}^\sigma)^e}) < \dim(Q(\mathfrak{g}^\sigma))$. 
\end{definition}

Very-speciality of a pair is stronger then its regularity. Namely, we have

\begin{proposition}[{\cite[Remark 7.4.3]{AG09}}]
\label{proposition: very special then regular}
A very special symmetric pair is regular.  
\end{proposition}

In practice, usually in order to prove that a pair is regular, one either shows that all the $\theta$-admissible elements in $G$ actually lies in $H$, or that the pair is very-special.  
We shall consider specifically pairs for which this alternative holds.

\begin{definition}
A symmetric pair $(G,H,\theta)$ is called \textbf{trivially regular} if it is either very-special, 
or every admissible element in $G$ is in $H$. 
\end{definition}

The advantage of trivial regularity over regularity is that it is stable under base-extension and "restriction of the center".  

\begin{lemma}
\label{lemma: trivially regular stable to base-change}
	Let $(G,H,\theta)$ be a symmetric pair. Let $E/F$ be a finite field extension. 
	Assume that $(\mathbf{G}(E),\mathbf{H}(E),\theta)$ is trivially regular. Then so is 
	$(G,H,\theta)$.
\end{lemma}

\begin{proof}
	Suppose first that $(\mathbf{G}(E),\mathbf{H}(E),\theta)$ is very-special. 
	For every nilpotent element $e \in \mathfrak{g}^\sigma(E)$, and for every 
	completion of it to a graded $sl_2$-triple $(f,h,e)$, we have 
	$Tr(ad_h(E)|_{(\mathfrak{g}^\sigma)^e(E)}) < dim_E(\mathfrak{g}^\sigma(E))$. In particular, by the stability of dimension and trace under field extension, we get 
	$Tr(ad_h|_{(\mathfrak{g}^\sigma)^e}) < \dim(\mathfrak{g}^\sigma)$ for every $e \in \mathfrak{g}^\sigma$ nilpotent and 
	every completion $(f,h,e)$ over $F$. This implies that $(G,H,\theta)$ is very special, hence regular. 
	
	Similarly, if every admissible element of $\mathbf{G}(E)$ lie in $\mathbf{H}(E)$, then 
	every admissible element of $G$ is in $\mathbf{H}(E) \cap G = H$. It follows again that $(G,H,\theta)$ is regular. 
\end{proof}

\begin{lemma}
	\label{lemma: restriction of center for trivially regular}
	Let $(G,H,\theta)$ be a symmetric pair. Let $G' \subseteq G$ be a $\theta$-stable subgroup of 
	$G$, and let $Z \subseteq Z(G)$ be a $\theta$-stable connected subgroup such that 
	$\theta$ is trivial on $Z$ and $G =G'Z$. If $(G,H,\theta)$ is trivially regular, so is 
	$(G',G'\cap H, \theta)$.  
\end{lemma}

\begin{proof}
	Suppose first that $(G,H,\theta)$ is very-special. Let $e \in \mathfrak{g}'^\sigma$, the symmetric part of the Lie algebra of $G$. Let $(f,h,e)$ be a completion of it to a graded $\mathfrak{sl}_2$-algebra in $\mathfrak{g}'$. On the level of Lie algebras, unless
	$Z \subseteq G'$, we have $\mathfrak{g} = \mathfrak{g}' \oplus Lie(z)$. But it is clear that 
	$Q(\mathfrak{g}'^\sigma) \cong Q(\mathfrak{g}^\sigma)$ in this case, hence the eigen-values of $h$ on the centralizer of $e$ in both cases is the same, and they have the same dimension. 
	By the definition of very-speciality, it follows that $(G',H',\theta)$ is very special in that case. 
	
	If every admissible $g \in G$ is in $H$, then clearly every admissible $g \in G'$ is in 
	$H \cap G' = H'$. We deduce that $(G',H',\theta)$ is trivially regular. 
\end{proof}

We now ready to use our stability results to classify the Gelfand pairs among several classical symmetric pairs. We will do this for the pairs in the following list.

\begin{List}
\label{list: pairs for Gelfand verification}
\begin{align*}
	&(SL(V), GL(V^+) \times GL(V^-) \cap SL(V), Ad_h), (O(B),O(B^+) \times O(B^-))\text{ over  }\RR,\\ 
	&(U(B), U(B^+) \times U(B^-))
, (SL(V(E)),SL(V(F))), (GL_F(V),GL_E(V))\text{ and } 
(SL_F(V),GL_E(V),Ad_J). 
\end{align*}
\end{List}

%For convenience, in all the results we shall state in the rest of this sub-section, we shall refer to those pairs are \textbf{"pairs from the list"}. 

The scheme of the proof is identical to all of them: In cases where they are not p-stable we deduce that the pair is not a Gelfand pair immediately from Theorem 
\ref{theorem: Gelfand then p-stable}. In cases where the pair is stable, we show it is very-special, mostly referring to previous work or slightly modifying known arguments. Then, we find an admissible anti-involution preserving $H$ for each of them.  

\begin{proposition}
	\label{proposition: pairs in the list have admissible anti-involution}
	All the pairs in List \ref{list: pairs for Gelfand verification} has an anti-involution $\tau$ for which $\tau(H)=H$ and such that $\tau$ stabilizes the closed conjugacy classes of $G$. 	
\end{proposition}

\begin{proof}
	We choose the following involutions for the pairs in the list: 
	\begin{itemize}
		\item $\tau(x)=x^t$ for  
		$(SL(V), GL(V^+) \times GL(V^-))$, 
		$(U(B), U(B^+) \times U(B^-))$,  $(SL(V(E)),SL(V(F)))$, $(GL_F(V),GL_E(V))$ and 
		$(SL_F(V),GL_E(V),Ad_J)$. Here we assume for all those pairs that the involution of the pair commute with transposition, This can always be achieved up to conjugation. 
		\item  $\tau(x)=x^{-1}$ for the pair $(O(B),O(B^+) \times O(B^-))$. 
	\end{itemize}

	We have now to verify that those anti-involutions are indeed admissible and preserve $H$. The fact that $H$ is preserved follows in the first case from the assumption that $\theta$ commutes with transposition.  
	For $\tau(x)=x^{-1}$ it is obviously true. To check that it is admissible, since all those anti-involutions clearly normalizes the action of $H$, it suffices to check that all closed closed conjugacy classes in $G$. It is well known that 
	transposition preserve closed conjugacy classes in 
	$SL(V)$ and $GL(V)$. 
	
	Consider the group $U(B)$. In this case, the compatibility of $\tau$ with 
	the unitary structure means that $\tau(x) = \bar{x}^{-1}$, where 
	$\bar{x}$ is the conjugate of $x$ for the quadratic extension $E/F$ over which $B$ is defined. 
	Now, if $x$ is semi-simple, its primary values come in pairs 
	$\lambda, \bar{\lambda}^{-1}$, with conjugate-dual eigen-spaces. It follows immediately that $\bar{x}^{-1}$ has the same primary decomposition as $x$, with the same pairing among its primary subspaces, hence $\bar{x}^{-1}$ is conjugate to $x$. 
	A similar argument shows the claim for $\tau$ in the case of $O(B)$.   
\end{proof}

Our next goal is to show that all the descendants of the pairs in the list are regular. 
This is mainly done in \cite{AG09} and \cite{AG10}. We will use freely the fact that in 
the proof of regularity for all the pairs in these papers, actually trivial regularity is proven.

\begin{proposition}
	\label{proposition: trivial regularity for pairs from the list}
	All the descendants of all the pairs in List \ref{list: pairs for Gelfand verification} are trivially regular. 
\end{proposition}

\begin{proof}
	For the pair $(O(B),O(B^+)\times O(B^-),Ad_h)$ this is shown in 
	\cite[ Theorem 3.0.5]{AG10}. 
	All the descendants of the pair $(GL(V),GL(V^+) \times GL(V^-), Ad_h)$ are trivially regular, by (\cite[Section 7.7]{AG09}), and by Lemma \ref{lemma: restriction of center for trivially regular} this holds also for $(SL(V),GL(V^+) \times GL(V^-) \cap SL(V),Ad_h)$. 
	
	The pairs 
	\begin{align*}
	&(U(B), U(B^+) \times U(B^-)),  (SL(V(E)),SL(V(F))), (GL_F(V),GL_E(V))\\
	&\text{ and }(SL_F(V),GL_E(V),Ad_J)    
	\end{align*}
	 are $F$-forms of the pairs 
	\begin{align*}
	&(GL(V),GL(V^+)\times GL(V^-)), (G \times G,G), (GL(V),GL(V^+)\times GL(V^-))\\
	&\text{and }(SL(V), GL(V^+)\times GL(V^-)\cap SL(V))
	\end{align*} respectively, so their descendants are $F$-forms of the descendants of those pairs. The trivial regularity of the descendants of the pairs $(U(B), U(B^+) \times U(B^-))$,  $(SL(V(E)),SL(V(F)))$, $(GL_F(V),GL_E(V))$ and 
	$(SL_F(V),GL_E(V),Ad_J)$ follows now from the trivial regularity of the descendants of the pairs of the form $(GL(V),GL(V^+)\times GL(V^-)), (G \times G,G), (GL(V),GL(V^+)\times GL(V^-))$ and $(SL(V), GL(V^+)\times GL(V^-)\cap SL(V))$ via Lemma \ref{lemma: trivially regular stable to base-change} (The only pair for which trivial regularity of the descendants still have to be discussed is $(G\times G,G)$. This is shown in \cite[\S 7.6]{AG09}). 
\end{proof}

As a corollary, we deduce the following result regarding the Gelfand property of the pairs we consider.
\begin{theorem}
	\label{theorem: stability imply Gelfand for pairs from the list}
	All the stable pairs among the pairs in List \ref{list: pairs for Gelfand verification} are Gelfand pairs. 
\end{theorem}

\begin{proof}
	By proposition \ref{proposition: pairs in the list have admissible anti-involution}, all pairs from the list admits an $Ad_G$-admissible anti-involution preserving $H$. 
	By proposition \ref{proposition: trivial regularity for pairs from the list}, all the descendants of pairs from the list are trivially regular, hence regular. 
	The result now follows from Corollary \ref{corollary: criterion for gelfand}. 
\end{proof}

We end up by explicitly classifying the pairs in List \ref{list: pairs for Gelfand verification} into Gelfand and non-Gelfand pairs, at least for the cases in which this classification was unknown before. 

\begin{theorem}[{Main Theorem}]
\label{theorem: main theorem of paper}
	$\newline$
	\begin{itemize}	
		\item The pair $(SL(V), (GL(V^+) \times GL(V^-) \cap SL(V)), Ad_h)$ is a Gelfand pair if and only if $\dim(V^+) \ne \dim(V^-)$.
		\item\footnote{The non-Archimedean version of this result is more involved, and hence ommited from this version of the paper. It appears in the previous arxiv version} Let $B = B^+ \oplus B^-$ be a non-degenerate quadratic form over $\RR$.
		The pair $(O(B),O(B^+) \times O(B^-))$ is a Gelfand pair if and only if either $B^+$ or $B^-$ is definite.
		\item Let $B = B^+ \oplus B^-$ be a Hermitian form over $\CC$. The pair
		$(U(B), U(B^+) \times U(B^-))$ is a Gelfand pair if and only if $B^+$
		or $B^-$ is definite.
		\item Let $B = B^+ \oplus B^-$ be a Hermitian form for a quadratic extension $E/F$ of non-Archimedean local fields of characteristic 0. The pair
		$(U(B), U(B^+) \times U(B^-))$ is a Gelfand pair if and only if $B^+$
		or $B^-$ is of rank 1. 
		\item 	Let $E/F$ be a quadratic extension of local fields of characteristic $0$.
		The pair $(SL_n(E),SL_n(F))$ is a Gelfand pair if and only if $n$ is odd.
		\item  Let $E/F$ be a quadratic extension of local fields of characteristic $0$. The pair $(GL_F(V),GL_E(V),Ad_J)$ is a Gelfand pair.
		\item 	Let $E/F$ be a quadratic extension of local fields of characteristic $0$. The pair $(SL_F(V),SL_E(V),Ad_J)$ is a Gelfand pair if and only if $\dim(V) = 1$ or $F$ is Archimedean.		
\end{itemize}

\end{theorem}

\begin{proof}
	This is just an amalgamation of results which we already stated in this and the previous section. By theorem \ref{theorem: Gelfand then p-stable}, the non-p-stable among them are not Gelfand pairs. By Theorem \ref{theorem: stability imply Gelfand for pairs from the list}, the stable pairs among them are Gelfand pairs. 
	By the results summarized in table \ref{table:stable pairs} in Section \ref{section: calculations} one sees that all the pairs in List \ref{list: pairs for Gelfand verification} are either stable or non-p-stable. It follows that stability and the Gelfand property for them are equivalent. The result now follows from the criteria for stability for those pairs summarized in the same table.  
\end{proof}
%The non-Archimedean version of this result is more involved, and hence ommited from this version of the paper. There are still few interesting cases where this pair is not stable but is p-stable. They probably have to be treated using other methods.

\bibliographystyle{unsrt}
\bibliography{ref}

%\begin{thebibliography}{}
%\bibitem[Ad00] {adams} J. Adams, {\it Galois Cohomology of Real Groups}, (2000). arXiv:1310.7917[math.GR].

%\end{thebibliography}

\end{document}